\newtheorem{lemma}{Lemma}[section]
\newtheorem{theorem}{Theorem}[section]
\newtheorem*{thm}{Main Theorem}
\newtheorem{corollary}{Corollary}[section]
\newtheorem{proposition}{Proposition}[section]
\newtheorem{remark}{Remark}[section]
\journal{}
\begin{document}

\begin{frontmatter}

\title{Non-convergence and convergence of bounded solutions to semilinear wave equation with dissipative boundary condition\tnoteref{mytitlenote}}

\tnotetext[mytitlenote]{This research was supported by the National Natural Science Foundation of China (12272297).}

\author[mainaddress]{Zhe Jiao\corref{mycorrespondingauthor}}
\cortext[mycorrespondingauthor]{Corresponding author}
\ead{zjiao@nwpu.edu.cn}

\author[mainaddress]{Xiao Li}
\ead{lixiaoo@mail.nwpu.edu.cn}

\address[mainaddress]{School of Mathematics and Statistics, MOE Key Laboratory for Complexity Science in Aerospace, Northwestern Polytechnical University, Xi'an 710129, China}


\begin{abstract}
This paper is concerned with the long-time dynamics of semilinear wave equation subject to dissipative boundary condition.
To do so, we first analyze the set of equilibria, and show it could contain infinitely many elements. 
Second, we show that, for some nonlinear interior sources, the wave equations have solutions that do not stabilize to any single function, while they approach a continuum of such functions.
Finally, if the interior source is a {\L}ojasiewicz-type function, the solution of the wave equation converges to an equilibrium at a rate that depends on the {\L}ojasiewicz exponent, although the set of equilibria is infinite.

\end{abstract}

\begin{keyword}
wave equation \sep dissipative boundary condition \sep critical exponent \sep convergence to an equlibirum
\end{keyword}

\end{frontmatter}


\section{Introduction}
\label{sec:intro}

Let $\Omega$ be a bounded domain in $\mathbb{R}^{d}$, $d = 3$, with a smooth boundary $\Gamma$. The outward normal direction to the boundary is denoted by $\nu$. $f_i$, $i=0, 1$, and $g$ are nonlinear functions.
We consider the following semilinear wave equation
\begin{equation}
	u_{tt} - \Delta u +f_0(x, u) = 0, \quad (t, x)\in\mathbb{R}^{+} \times \Omega \label{wave}
\end{equation}
subject to the dissipative boundary condition
\begin{equation}
	\partial_{\nu}u + f_1(u) + g(u_{t})= 0,  \quad (t, x)\in \mathbb{R}^{+} \times \Gamma \label{dynamic_bdry} 
\end{equation}
and the initial conditions
\begin{equation}
	 u|_{t=0} = u_{0}(x),  \quad  u_{t}|_{t=0} = u_{1}(x), \quad x\in \Omega.\label{initial_data} 
\end{equation}
The investigation of the long-time behavior of solutions to this wave equation has attracted many researchers' attention. Based on the dissipative nature of the system,
\[
	\textrm{energy $\mathcal{E}(u)$ is dissipated by boundary damping $g(\cdot)$}
\]
it can be divided into three categories: energy decay, attractiveness and convergence to equilibria.
In the first category, it is concerned with the problem~(Q1): the decay rates of solutions in the finite energy space as time approaches infinity. For an in-depth investigation of this problem, we direct the readers to seminal works \cite{LT1993, CCP2004, CCL2007} and references therein.
For the study of the second category, there is a series of questions asked on the attractiveness problem (Q2) as follows: (Q2-1) the existence and structure of global attractors;~(Q2-2) the regularity of attractors; (Q2-3) the fractal dimension of attractors. In this direction, we refer to a series of papers \cite{CEL2002, CEL2004, chueshov2009, rodrigues2022, HLY2025}.
Regarding the third category, an interesting problem~(Q3) is whether the global solution will converge to an equilibrium or not when the equilibria are multiple. To be more precise, (Q3) involves three specific questions as follows:
\begin{itemize}
	\item (Q3-1) whether the set of the equilibria contains infinitely many elements; 
	\item (Q3-2) whether some nonlinearities $f_0$ can cause the bounded solution of the wave equation not to converge to any single function;
	\item (Q3-3) whether the bounded solution of the wave equation converge to an equilibrium if $f_0$ satisfies some assumptions.
\end{itemize}
Up to now, there has been limited work on these questions~\cite{CEL2002, chueshov2004, Hao2006, Jiao2018}. It should be noted that in~\cite{CEL2002, chueshov2004} the authors show that the solution converges to an equilibrium if the set of equilibria satisfies some assumption, while in~\cite{Hao2006, Jiao2018} the authors merely addressed question (Q3-3) under other certain constraints. Building on prior research, the aim of our project is to provide comprehensive answers to each of these questions individually.

\subsection{Main results}

In this paper, we focus on the problem (Q3) of the following wave equation
\begin{equation}\label{wave_eq2}
\begin{split}
	u_{tt} - \Delta u +f_0(x, u) &= 0, \quad (t, x)\in\mathbb{R}^{+} \times \Omega,\\
	\partial_{\nu}u + u + g(u_{t})&= 0,  \quad (t, x)\in \mathbb{R}^{+} \times \Gamma,\\
	u|_{t=0} = u_{0}(x), \quad u_{t}|_{t=0} &= u_{1}(x), \quad x\in \Omega. 
\end{split}
\end{equation}
Here, the nonlinear functions $f_0$ and $g$ satisfy the following assumptions.
\begin{itemize}
\item (A1) $f_0$ is $C^2$ in $x \in \bar{\Omega}$. Moreover, $f_0(x, 0) = 0$ and there exists a positive constant $C$ such that
\begin{equation} \label{assumption_critical}
	|f_0^{\prime\prime}(\cdot, s)| \leq C(1+|s|), \quad  \forall s \in \mathbb{R}.
\end{equation}
\item (A2) for all $x \in \bar{\Omega}$,
\begin{equation} \label{assumption_growth_1}
    \liminf_{|s| \rightarrow \infty}\frac{f_{0}(\cdot, s)}{s}\geq -c,
\end{equation}
where $c$ is a constant satisfying $c<\mu_{0}$, and $\mu_{0}$ is the best constant in the following Poincar\'e-type inequality
\begin{eqnarray*}
	\int_{\Omega}|\nabla u|^2\mathrm{d}x + \int_{\Gamma}|u|^2\mathrm{d}S \geq \mu_{0}\int_{\Omega}|u|^2\mathrm{d}x.
\end{eqnarray*}
\item (A3) $g\in C^{1}(\mathbb{R})$ with $g(0)=0$, $g$ is monotonically increasing, and there exist two positive constants $m_1$ and $m_2$ such that
\begin{equation} \label{assumption_g}
	0<m_{1} \leq g^{\prime}(s) \leq m_{2}< \infty, \quad s\in\mathbb{R}.
\end{equation}
\end{itemize}

In what follows we state a simplified version of the main result of this paper (see Theorem~\ref{equilibria:infinity},~\ref{thm:main1} and~\ref{thm:main2} for exact statements).
\begin{thm}
Under the assumptions {\rm{(A1)}}, {\rm{(A2)}} and {\rm{(A3)}} on the functions $f_0$ and $g$, we have the following properties.
\begin{itemize}
\item For some nonlinearities $f_0$, the corresponding stationary problem of equation~\eqref{wave_eq2} has infinitely many nontrivial solutions.
\item There exists a $C^{\infty}$ function $f_0: \bar{\Omega}\times\mathbb{R}\rightarrow\mathbb{R}$ such that the $\omega-$limit set of the bounded solution to~\eqref{wave_eq2} is a continuum in $H^1(\Omega)$ homeomorphic to the unit circle $S^1$.  
\item Suppose the domain $\Omega$ is quasi-star-shaped. If $f_0$ also satisfies the {\L}ojasiewicz condition, then the bounded solution to equation~\eqref{wave_eq2} converges to an equilibrium in the topology of the finite energy space as time goes to infinity. Moreover, the convergence rate depends on the {\L}ojasiewicz exponent.
\end{itemize}
\end{thm}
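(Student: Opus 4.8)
The plan is to prove the three assertions by three different mechanisms, all built on two structural facts. First, the stationary problem associated with \eqref{wave_eq2},
\[
-\Delta u + f_0(x,u)=0 \text{ in } \Omega, \qquad \partial_\nu u + u = 0 \text{ on } \Gamma,
\]
is the Euler--Lagrange equation of the functional
\[
J(u)=\tfrac12\int_\Omega|\nabla u|^2\,\mathrm dx+\tfrac12\int_\Gamma|u|^2\,\mathrm dS+\int_\Omega F_0(x,u)\,\mathrm dx,\qquad F_0(x,s)=\int_0^s f_0(x,\tau)\,\mathrm d\tau,
\]
on $H^1(\Omega)$, so equilibria are exactly the critical points of $J$. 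Second, the evolution problem is gradient-like: the total energy $\mathcal E$ is a strict Lyapunov functional (it decreases strictly off equilibria because of (A3)), so the $\omega$-limit set of any bounded trajectory consists of equilibria. For the first assertion I would simply exhibit a concrete admissible $f_0$ and count critical points of $J$. Taking $f_0(x,u)=-\lambda u+u^3$ with $\lambda$ exceeding the first Robin eigenvalue $\mu_0$ satisfies (A1)--(A2) (here $f_0''=6u$ obeys \eqref{assumption_critical} and $\liminf f_0/s=+\infty$), makes $J$ even with $J(0)=0$, and renders the quadratic part indefinite so that $0$ is a saddle. Since $d=3$ forces $F_0$ to grow no faster than the subcritical power $|s|^4<|s|^6$, the compact embeddings $H^1(\Omega)\hookrightarrow L^p(\Omega)$ $(p<6)$ and $H^1(\Omega)\hookrightarrow L^q(\Gamma)$ give the Palais--Smale condition, and the Ambrosetti--Rabinowitz condition holds with exponent $4$. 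The symmetric mountain pass (fountain) theorem then produces an unbounded sequence of critical values, hence infinitely many distinct nontrivial equilibria.

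For the second assertion the mechanism is slow drift along a circle of equilibria. I would first build a $C^\infty$, deliberately non-analytic, gradient field on $\mathbb R^2$ whose flow has a bounded trajectory whose $\omega$-limit set is a full circle of rest points: in polar coordinates one takes a potential that is flat to infinite order on $\{r=1\}$ but carries a ridge whose phase winds infinitely often as $r\to1$ (e.g. assembled from a factor $\exp(-1/(1-r^2))$ times a slowly rotating angular factor), so that the gradient trajectory spirals into the circle without settling at any point. The analytic content is to realize this planar flow inside the PDE: selecting two eigenfunctions of $-\Delta$ with the Robin datum and invoking an invariant/center-manifold reduction for the boundary-damped wave equation, I would choose $f_0$ so that the restriction of the semiflow to a two-dimensional invariant manifold coincides, after the slaving that turns the over-damped second-order dynamics into a first-order gradient flow, with the prescribed planar field. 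The circle of rest points then becomes a circle of equilibria of \eqref{wave_eq2}, and the spiraling orbit furnishes a bounded solution whose $\omega$-limit set is homeomorphic to $S^1$ in $H^1(\Omega)$.

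For the third assertion I would run the {\L}ojasiewicz--Simon scheme adapted to boundary damping. The energy identity reads $\frac{\mathrm d}{\mathrm dt}\mathcal E(u)=-\int_\Gamma g(u_t)u_t\,\mathrm dS\le0$, with $g(s)s\ge m_1 s^2$ by (A3). Precompactness of the trajectory in the finite-energy space follows from (A1)--(A2) together with a Rellich/Morawetz multiplier estimate, and this is exactly where the quasi-star-shapedness of $\Omega$ is consumed: it supplies a multiplier $m(x)$ giving the boundary terms the correct sign, so the boundary-only dissipation controls the full interior energy. The {\L}ojasiewicz hypothesis on $f_0$ yields a {\L}ojasiewicz--Simon inequality $|\mathcal E(u)-\mathcal E(\varphi)|^{1-\eta}\le C\|\mathcal E'(u)\|$ near an equilibrium $\varphi$ with exponent $\eta\in(0,\tfrac12]$. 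Combining it with the dissipation identity and an auxiliary functional of the form $\mathcal E(u)-\mathcal E(\varphi)+\varepsilon\int_\Omega u_t(u-\varphi)\,\mathrm dx$ produces a differential inequality whose integration shows $u_t\in L^1$ in time, giving convergence $u(t)\to\varphi$ in the energy norm, and the comparison ODE delivers the exponential rate when $\eta=\tfrac12$ and the algebraic rate determined by $\eta$ otherwise.

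The main obstacle is the second assertion: establishing a genuine two-dimensional invariant manifold for the hyperbolic (not parabolic) flow and verifying that the reduced dynamics is exactly the gradient of the prescribed non-analytic potential, all while keeping $f_0$ compatible with (A1)--(A3). A secondary difficulty, in the third assertion, is upgrading the purely boundary dissipation to control of the interior energy through the quasi-star-shaped multiplier, which is the step that forces that geometric hypothesis.
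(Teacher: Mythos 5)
Your second and third assertions follow essentially the same route as the paper. For non-convergence you propose exactly the Pol\'a\v{c}ik--Rybakowski mechanism the paper implements: a planar gradient-like flow whose Lyapunov function is flat to infinite order on a circle of rest points with a phase that winds as the circle is approached, realized inside the PDE by a center-manifold reduction over a two-dimensional kernel (Lemmas~\ref{eigenvalue:multiple}--\ref{decomposition_2}, Proposition~\ref{ode_nonconvergence}). Two points of detail: the paper never makes the reduced flow \emph{equal} to the model field --- it only shows the reduced Lyapunov functional splits as the model potential $\mathcal{W}$ plus error terms $\Upsilon_1,\Upsilon_2$ that are exponentially flat near the circle, Proposition~\ref{ode_nonconvergence} being robust to such perturbations --- and producing the two eigenfunctions with the needed structure (a double Robin eigenvalue whose eigenfunction combinations have a nondegenerate, injectively moving maximum) is itself a nontrivial lemma requiring $\Omega$ to contain a cylinder. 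For convergence, your {\L}ojasiewicz--Simon scheme is the paper's as well; the one substantive difference is that your auxiliary functional $\mathcal{E}(u)+\varepsilon\int_\Omega u_t(u-\varphi)\,\mathrm{d}x$ is the standard one for \emph{interior} damping, and with damping only on $\Gamma$ its derivative contains $+\varepsilon\|u_t\|^2$, which $-\int_\Gamma g(u_t)u_t\,\mathrm{d}S$ cannot absorb directly. The paper fixes this with the trace-type estimate \eqref{important_inequal} of Lemma~\ref{solu:boundedness} (interior norms of $u_t,u_{tt}$ controlled by their boundary traces, via the quasi-star-shaped multiplier plus a compactness/unique-continuation argument) and a more elaborate functional $\mathcal{V}$; you correctly identify this as the step consuming (A4), so the strategy matches even though the functional you wrote would have to be replaced.

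The genuine gap is in the first assertion. The example $f_0(x,u)=-\lambda u+u^3$ cannot yield infinitely many equilibria, and the argument attached to it is incorrect. With this sign, $F_0(x,u)=-\frac{\lambda}{2}u^2+\frac{1}{4}u^4$, so $E(u)=\frac12\|u\|_{H^1}^2-\frac{\lambda}{2}\|u\|^2+\frac14\|u\|_{L^4(\Omega)}^4$ is coercive and bounded below. The Ambrosetti--Rabinowitz condition does not ``hold with exponent $4$'': for the right-hand-side nonlinearity $\lambda s-s^3$ the primitive tends to $-\infty$, while AR demands a positive superquadratic primitive. Consequently the fountain geometry fails outright: one needs $\max\{E(u):u\in \mathcal{Y}_k,\ \|u\|_{H^1}=\rho_k\}\le 0$ on finite-dimensional subspaces of arbitrarily large dimension, but $E\to+\infty$ on every such subspace because the quartic term enters with a plus sign. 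What survives for this $f_0$ is Clark's theorem, giving finitely many pairs of low-energy solutions (roughly as many as there are Robin eigenvalues below $\lambda$). Worse, an unbounded sequence of critical values is impossible here: by the maximum principle (an interior maximum forces $\lambda u-u^3\ge 0$, and a positive boundary maximum contradicts $\partial_\nu u=-u<0$) every solution satisfies $\|u\|_{L^\infty}\le\sqrt{\lambda}$, whence $E$ is uniformly bounded on the whole solution set.

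The deeper point missed is that (A2) is what makes this assertion delicate. Flipping the sign to $f_0=-\lambda u-u^3$ would restore AR and the fountain mechanism, but then $\liminf_{|s|\to\infty}f_0(\cdot,s)/s=-\infty$, violating \eqref{assumption_growth_1} (and destroying the coercivity of $\mathcal{E}$ on which the rest of the paper relies). Theorem~\ref{equilibria:infinity} is engineered precisely to escape this tension: it takes $f_0=\lambda(x)s-\hat f(x,s)$ with $\hat f$ \emph{asymptotically linear} (so (A2) holds), replaces AR by the weaker, $R$-dependent condition (2), and applies the Fountain theorem under that weakened hypothesis; the concrete example is $\hat f(x,s)=se^{-1/s^2}$, not a power. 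Your first assertion needs an ingredient of this kind; as written, it fails.
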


\subsection{Related work}
\label{sec:literature}

Before giving the detailed proof of our main results, let us give a short survey of the literature relevant to this initial-boundary value problem for the semilinear wave equation.

To study the asymptotic behavior of the solution, the existence and uniqueness of both local and global solutions to the wave equation should be investigated first. There is a considerable number of papers dealing with the well-posedness problem \cite{LT1993, CCP2004, CCL2007, CEL2002, Vitillaro_2002, vitillaro2002global, Cavalcanti2006}. 
In the case that the boundary source $f_1$ is linear, or absent, the author \cite{Vitillaro_2002} makes use of the potential well theory to investigate the existence of global solutions as well as the blow-up of weak solutions in finite time. Meanwhile, in~\cite{CCP2004} the authors combine the potential well theory with the Faedo-Galerkin procedure to obtain the existence of global solutions.
Another method to study the well-posedness problem is based on the nonlinear semigroup theory, as presented in \cite{CEL2002} and \cite{Cavalcanti2006}. 
As to the presence of nonlinear boundary source $f_1$, it leads to the fact that Lopatinski condition~\cite{lasiecka1990sharp, tataru1998regularity} fails for Neumann hyperbolic map. Thanks to the \textit{dissipation mechanism}, even linear damping changes problem to the one where Lopatinski condition is satisfied~\cite{Lasiecka_Triggiani_2000}. This is referred to as the smoothing effect. With the aid of this effect, the authors in~\cite{CCL2007} and~\cite{rodrigues2022} apply the same technique as in~\cite{LT1993} to address the issue of well-posedness. An excellent literature review of this context can be found in~\cite{CCL2007}.

Suppose that for any given initial datum, the wave equation admits a unique global solution in a certain function space. 
Naturally, we can ask the problem (Q1), that is, how does the energy functional defined by the global solution evolves? We provide a brief outline of the research in this direction.
Beginning with a linear wave equation, the authors in~\cite{bardos1992sharp,martinez1999} provide a deep insight into the relationship between energy decay and linear boundary damping. In particular, the paper~\cite{bardos1992sharp} gives sharp sufficient conditions for stabilization of the wave equation from the boundary.
The papers~\cite{zuazua1990uniform, martinez1999new} prove that the energy of the wave equation decays to zero with an explicit decay rate under some geometrically restricted conditions on the nonlinear boundary damping.
Considering the interaction of nonlinear interior source term and nonlienar boundary damping, the authors in~\cite{CCP2004, Cavalcanti2006} prove the uniform decay rates for the wave equation.
For the general case, we refer to the papers~\cite{LT1993, CCL2007}, where a semilinear model of the wave equation with nonlinear interior source and nonlinear boundary damping- source interaction. Moreover, the authors in~\cite{CCL2007} establish effective optimal decay rates of the energy for the semilinear wave equation.

The analysis for establishing decay rates is a significant preliminary step in studying the attractiveness problem (Q2) of the semilinear wave equation. In order to prove the existence of attractor, there are two frameworks illustrated in the following diagram.
\begin{equation*}
\scalebox{0.8}{
\xymatrix{
\textrm{Energy decay}  \ar@{->}[0,2]  \ar@{->}[2, 0]  &  & \textrm{Gradient system}\ar@{->}[2,0]\\
 & {\color{blue}\textrm{Dissipation}} & \\
\textrm{Absorbing set} \ar@{->}[0,2] & &\textrm{Attractor}
}
}
\end{equation*}
It can be seen that the \textit{dissipation mechanism} is the core to address (Q1) and (Q2).
In this context, it is important to mention the seminal works published by Lasiecka's group~\cite{CEL2002, chueshov2004, CEL2004, chueshov2009}. Notice that the research in~\cite{CEL2002, chueshov2004, CEL2004} deals with the wave equation~\eqref{wave_eq2}, in which the nonlinear damping acts on the entire boundary but is unquantized at the origin. In~\cite{CEL2002}, the authors show the first result that the equation~\eqref{wave_eq2} possesses a global and compact attractor in finite energy space, which coincides with the unstable manifold emanating from stationary solutions. The strategy in~\cite{CEL2002} relies on the proof that the dynamical system associated with the wave equation possesses an absorbing set and is asymptotically smooth~\cite{chueshov2015dynamics}.
These results established in \cite{CEL2002} form the basis for the subsequent results in \cite{chueshov2004, CEL2004}, where the authors show the fractal dimension of the global attractor is finite only in a subcritical case of interior source. 
Regarding the case of localized boundary damping, the authors provide answers to the three questions of (Q2) in~\cite{chueshov2009}.
Recently, the authors in~\cite{rodrigues2022} show the corresponding dynamical system is of gradient-type and asymptotically smooth, and then extend the results in~\cite{CEL2002} to the general case of wave equation subject to nonlinear boundary damping and nonlinear interior and boundary sources with critical exponents. Later,~\cite{HLY2025} complements these results of \cite{rodrigues2022} with estimates of finite dimension and regularity of global attractors.

Assume that the global solution of the wave equation~\eqref{wave_eq2} is also bounded in a certain norm. For the asymptotic behavior of the solution, there are two possibilities (Q3-2) and (Q3-3) as time goes to infinity. To our best knowledge, the problem (Q3-2) has not been studied in earlier works, while limited work~\cite{CEL2002, CEL2004, chueshov2004, Hao2006, Jiao2018} has been done for (Q3-3) as mentioned before. However, the convergence results in these few papers hold true only when the damping acts on the entire boundary and additional requirements are imposed on the equation~\eqref{wave_eq2}.
If the set of equilibria is finite, the convergence of the bounded solution to an equilibrium is proved in~\cite{CEL2002, CEL2004, chueshov2004}. We need to point out in Theorem~\ref{equilibria:infinity} of our work that this finite assumption on the set of equilibria is not always true. In the case of the presence of linear interior and boundary dampings, the convergence result is shown in~\cite{Hao2006}. Given initial data small enough, local convergence result that the bounded solutions stabilize to zero equilibrium is provided in~\cite{Jiao2018}. The convergence result of this paper (see Theorem~\ref{thm:main2}) is valid in the absence of the interior damping and without any assumptions imposed on the initial data.
 
Given the above discussion, the main novelty of our contribution to solving the problem (Q3) can be summarized as follows: 
(i) We impose some conditions on the interior source that cause the set of equilibria to be infinite, and construct an interesting example satisfying these conditions, thereby answering (Q3-1);
(ii) We affirmatively answer (Q3-2) by proving the $\omega$-limit set of the bounded solution to the wave equation with some smooth interior source is a continuum in $H^1$ homeomorphic to the unit circle. While this type of result has been proved for damped wave equation with Dirichlet boundary condition~\cite{jendoubi2003non}, this paper provides the first result for the case of dissipative boundary condition;
(iii) We show that the convergence result holds under the assumption that the interior source is a {\L}ojasiewicz-type function, thereby affirmatively answering (Q3-3).
The detailed comparison of this paper with related studies is collected in Table~\ref{overview}.

\begin{table}[htbp]
\begin{adjustbox}{center}
\scalebox{0.8}{
\begin{tabular}{ccccccccccc}
\toprule
\multirow{2}{*}{Paper} &  \multirow{2}{*}{$f_0$} &  \multirow{2}{*}{$f_1$} &  \multirow{2}{*}{$g$} & Energy decay &  \multicolumn{3}{c}{Attractiveness} &  \multicolumn{3}{c}{Convergence}  \\ \cmidrule{5-11}
& & & & Q1 & Q2-1 &Q2-2 &Q2-3 &Q3-1& Q3-2 &Q3-3\\
\midrule
\cite{LT1993} & subcritical  & \multirow{2}{*}{subcritical} & \multirow{2}{*}{nonlinear} & \ding{51}  & & & & & &\\
\cite{CCL2007} & (sub)critical &  &  & \ding{51}  & & & & & &\\
\midrule
\cite{CEL2002} & critical  & \multirow{6}{*}{linear}  &   \multirow{4}{*}{nonlinear$^\S$} & &\ding{51} & \ding{51} & & & &\\  \cmidrule{2-2}
\cite{CEL2004} & \multirow{2}{*}{subcritical} &  & &  & & &  \ding{51} & &   & \ding{51}$^\dag$ \\
\cite{chueshov2004} &  &  & &  & & &  \ding{51} & &   & \ding{51}$^\dag$ \\  \cmidrule{2-2}
\cite{Jiao2018} & \multirow{2}{*}{critical} &  & &  & & & & & &\ding{51}$^\sharp$ \\ \cmidrule{4-4}
\cite{Hao2006} &  &  & linear &  & & & & & & \ding{51}$^\natural$  \\
\midrule
\cite{chueshov2009} & \multirow{2}{*}{critical} &  \multirow{2}{*}{linear} & nonlinear$^\flat$ &  & \ding{51} & \ding{51} & \ding{51} & &  & \\
{\color{blue}This work}  &  &  & nonlinear &  &   &  &  &  \ding{51} & \ding{51} & \ding{51}$^\ddag$\\
\midrule
\cite{rodrigues2022} &  \multirow{3}{*}{critical}   &  \multirow{3}{*}{critical} &  \multirow{3}{*}{nonlinear} &  &  \ding{51} & & &\\
\cite{HLY2025} &  &  & &  & & \ding{51} &  \ding{51} & & &\\
$/$  & &  & & & & & & & & {\color{red}open} \\
\bottomrule
\end{tabular}
}
\end{adjustbox}
\caption{Overview of the investigation about the long time behaviors of solutions to the wave equation~\eqref{wave} with initial-boundary value~\eqref{dynamic_bdry} and~\eqref{initial_data}. The superscript 
$^\S$ means that the nonlinear damping is unquantized at the origin;
$^\flat$ the damping acts on a subset of the boundary;
$^\dag$ the convergence property holds under finite assumption on $\mathcal{N}$;
$^\natural$ the convergence property holds under;
$^\sharp$ the convergence property holds under; 
$^\ddag$ the convergence property holds under the assumption that $f_0$ satisfies {\L}ojasiewicz condition.}
\label{overview}
\end{table}

\paragraph{Organization of the paper}
The remaining part of this paper is organized as follows.
Section~\ref{sec:evolution} briefly recalls the related well-posedness results of the equation~\eqref{wave_eq2}. 
In Section~\ref{sec:stationary}, we study the existence of multiple of solutions for a nonlinear elliptic equation with Robin boundary condition, which is the stationary problem of the equation~\eqref{wave_eq2}. In Section~\ref{sec:nonconvergence} we derive the non-convergence result of bounded solutions to the equation~\eqref{wave_eq2} via the centre-manifold reduction. Section~\ref{sec:convergence} discusses the {\L}ojasiewicz-type landscape, and gives a detailed proof of the convergence result for the equation~\eqref{wave_eq2}. Section~\ref{sec:conclusion} provides some conclusions.

\paragraph{Notation}
$C^{k}(\cdot)$ is the space of $k-$times continuously differentiable functions on the indicated set. $C^{\infty}(\cdot)$ is the space of infinitely differentiable functions on the indicated set. $C^k_{\mathrm{b}}(\cdot)$ is the space of functions whose derivative up to order $k$ are continuous and bounded on the indicated set, and the corresponding norm is given by the supremum of all these derivatives.

 $f^{\prime}(\cdot, s)$, $g^{\prime}(s)$ are the first-order derivative of $f$ and $g$ with respect to $s$, and $f^{\prime\prime}(\cdot, s)$ is the second-order derivative of $f$ with respect to $s$. 

The Sobolev spaces $W^{m, 2}$ are denoted by $H^m(\Omega)$ with norm $\|\cdot\|_{H^m}$, and especially $H^1(\Omega)$ is equipped with the norm
\[
	\|u\|_{H^1} = \left(\int_{\Omega}|\nabla u|^2\mathrm{d}x + \int_{\Gamma}|u|^2\mathrm{d}S \right)^{\frac{1}{2}}.
\]
The norm in $L^2(\Omega)$ is simply denoted by $\|\cdot\|$.

Let $\mathcal{H}:=H^{1}(\Omega)\times L^{2}(\Omega)$ be the finite energy space with the norm given by 
\[
	\frac{1}{2}\|(u, u_t)^{\top}\|^2_{\mathcal{H}} = \frac{1}{2}\left(\int_{\Omega}|\nabla u|^2\mathrm{d}x + \int_{\Gamma}|u|^2\mathrm{d}S + \int_{\Omega}|u_t|^2\mathrm{d}x\right).
\]

$c$, $c_i$, $C$, $C_i$, and $C(\cdots)$ denote positive numbers, which may depend on the quantities listed in brackets.

\section{Well-posedness}
\label{sec:evolution}
For the better readability and completeness, this section briefly recalls the well-posedness results for the wave equation~\eqref{wave_eq2}, which have been derived via the nonlinear semigroup theory in~\cite[Theorem 2.1]{CEL2002} and~\cite[Theorem 1]{rodrigues2022}.  
Let us define the energy function
\[
	\mathcal{E}(u) = \frac{1}{2}\|(u, u_t)^{\top}\|^2_{\mathcal{H}} + \int_{\Omega}F_0(x, u)\mathrm{d}x
\]
with $F_0(x, u) = \int_{0}^{u}f_0(x, s)\mathrm{d}s$, and introduce the space
\begin{equation*}
\mathcal{D} :=
 \left\{
\left(
\begin{array}{c}
u \\
v
\end{array}
\right)
\in H^2(\Omega)\times H^1(\Omega): \partial_{\nu} u + u +  g(v) = 0\quad  \textrm{on $\Gamma$}
\right\},
\end{equation*}
which clearly is a closed subspace of $H^{2}(\Omega)\times H^{1}(\Omega)$.
Now, we state the well-posedness result of problem~\eqref{wave_eq2}, and provide a sketch of the proof in the following.

\begin{lemma}\label{wellposedness}
{\rm(Existence and regularity of solutions)}
\begin{itemize}
\item Suppose that the initial data $(u_0, u_1)^{\top}$ belongs to $\mathcal{H}$. Then there exists a unique weak solution $(u, u_t)^\top \in C([0, \infty); \mathcal{H})$ to problem~\eqref{wave_eq2} such that
\begin{itemize}
\item $\partial_{\nu}u \in L^{2}_{\mathrm{loc}}([0, \infty)\times\Gamma)$, $u_t \in L^{2}_{\mathrm{loc}}([0, \infty)\times\Gamma)$, and $\partial_{\nu}u + u + g(u_t) = 0$ on $\Gamma$,
\item weak solutions satisfy the following energy identity
	\[
		\mathcal{E}(u(t))  + \int_{0}^{t}\langle  g(u_s(s)), u_s(s)\rangle \mathrm{d}s = \mathcal{E}(u(0)),
	\]
\item and weak solutions satisfy the following variational equality with derivatives understood in the sense of distribution
\[
	\frac{d}{dt}(u_t, \phi) + (\nabla u, \nabla\phi) + \langle  g(u_t) + u, \phi\rangle + (f(x, u), \phi) = 0, \quad \forall \phi\in H^{1}(\Omega).
\]	
\end{itemize}
\item Furthermore, if the initial data $(u_0, u_1)^{\top}\in\mathcal{D}$, then the corresponding weak solution is classical and possesses the regularity properties
\[
	(u, u_t)^\top \in C([0, \infty); \mathcal{D})\cap C^1([0, \infty); \mathcal{H}), \quad u_{tt}\in C([0, \infty); L^2({\Omega})).
\] 
\end{itemize}
\end{lemma}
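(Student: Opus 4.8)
The plan is to recast the initial--boundary value problem~\eqref{wave_eq2} as an abstract Cauchy problem on the finite energy space $\mathcal{H}$ and to apply the theory of nonlinear semigroups generated by maximal monotone operators, as in~\cite{CEL2002, rodrigues2022}. Writing $U=(u,v)^{\top}$ with $v=u_t$, I would split the governing operator as $\mathcal{A}=\mathcal{A}_0+\mathcal{F}$, where
\[
\mathcal{A}_0\begin{pmatrix}u\\ v\end{pmatrix}=\begin{pmatrix}-v\\ -\Delta u\end{pmatrix},\qquad \mathcal{F}\begin{pmatrix}u\\ v\end{pmatrix}=\begin{pmatrix}0\\ f_0(x,u)\end{pmatrix},
\]
with $\mathcal{A}_0$ carrying the nonlinear boundary damping through the domain $\mathcal{D}$, so that~\eqref{wave_eq2} becomes $U_t+\mathcal{A}_0 U=-\mathcal{F}(U)$, $U(0)=(u_0,u_1)^{\top}$. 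Pairing $\mathcal{A}_0 U$ with $U$ in $\mathcal{H}$, the interior gradient terms cancel after integration by parts and, on using $\partial_\nu u=-u-g(v)$ on $\Gamma$, the boundary contribution collapses to $\langle g(v),v\rangle$; since $g$ is increasing with $g(0)=0$ by (A3), this is nonnegative, so $\mathcal{A}_0$ is monotone. The source $\mathcal{F}$ is treated as a locally Lipschitz perturbation: assumption (A1) gives $|f_0'(s)|\lesssim 1+|s|^2$ in $d=3$, whence $\|f_0(x,u_1)-f_0(x,u_2)\|\lesssim(1+\|u_1\|_{H^1}^2+\|u_2\|_{H^1}^2)\|u_1-u_2\|_{H^1}$ by Hölder and the embedding $H^1(\Omega)\hookrightarrow L^6(\Omega)$.

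Next I would verify the range condition, i.e. solvability of $(I+\mathcal{A}_0)U=G$ for given $G=(g_1,g_2)\in\mathcal{H}$. Eliminating $v=u-g_1$ reduces this to the semilinear elliptic problem $-\Delta u+u=g_1+g_2$ with the nonlinear Robin condition $\partial_\nu u+u+g(u-g_1)=0$ on $\Gamma$, which I would solve by the Browder--Minty method: the associated operator on $H^1(\Omega)$ is monotone because $g$ is increasing and coercive by the Poincar\'e-type inequality of (A2). Thus $\mathcal{A}_0$ is maximal monotone, and the Komura--Kato theorem for a locally Lipschitz perturbation of a maximal monotone generator yields, for every $U_0\in\mathcal{D}$, a unique strong solution with $(u,u_t)^\top\in C([0,\infty);\mathcal{D})\cap C^1([0,\infty);\mathcal{H})$ and $u_{tt}\in C([0,\infty);L^2(\Omega))$; global existence follows from the a priori energy bound, the lower bound (A2) with $c<\mu_0$ keeping $\mathcal{E}$ coercive. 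For general $U_0\in\mathcal{H}$ I would approximate the data by elements of $\mathcal{D}$ and pass to the limit, the contraction estimate furnished by accretivity giving both uniqueness of the weak solution in $C([0,\infty);\mathcal{H})$ and its continuous dependence on the data.

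It then remains to establish the boundary trace regularity, the energy identity and the variational formulation. For strong solutions the energy identity follows by multiplying the equation by $u_t$, integrating over $\Omega$, and converting $\partial_\nu u\,u_t$ into $-\langle g(u_t)+u,u_t\rangle$ via the boundary condition; the nondegeneracy $g'(s)\ge m_1>0$ of (A3) then forces $g(u_t)u_t\gtrsim|u_t|^2$ on $\Gamma$, so that $u_t\in L^2_{\mathrm{loc}}([0,\infty)\times\Gamma)$, and the identity $\partial_\nu u=-u-g(u_t)$ propagates this to $\partial_\nu u\in L^2_{\mathrm{loc}}([0,\infty)\times\Gamma)$. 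The variational equality is obtained by testing the equation against $\phi\in H^1(\Omega)$ and integrating by parts.

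The main obstacle is the critical growth of the interior source. Assumption (A1) forces $f_0$ to grow like $|s|^3$, the critical exponent for $H^1(\Omega)\hookrightarrow L^6(\Omega)$ in $d=3$; thanks precisely to this embedding $\mathcal{F}$ remains locally Lipschitz on $\mathcal{H}$, so local existence survives, but criticality becomes an obstruction at the level of the energy \emph{identity} for weak solutions. Indeed, when one approximates data in $\mathcal{H}$ by data in $\mathcal{D}$ and passes to the limit, the nonlinear source and damping terms demand strong convergence that weak-$*$ compactness alone does not supply, and one obtains only the energy \emph{inequality}. Recovering equality, and with it the limiting variational formulation, is the delicate step, and this is where the \emph{smoothing effect} of the dissipative boundary condition enters: the hidden trace regularity of $u_t$ and $\partial_\nu u$ on $\Gamma$ must be exploited, as in~\cite{CCL2007, rodrigues2022}, to justify the limit passage in the critical terms and close the argument.
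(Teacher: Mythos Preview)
Your proposal is correct and follows the same overall strategy as the paper: recast~\eqref{wave_eq2} as an abstract Cauchy problem, show the principal part is maximal monotone, treat the critical interior source as a locally Lipschitz perturbation, invoke nonlinear semigroup theory for strong solutions, and pass to weak solutions by density. The paper's proof is in fact briefer than yours on the analytical points (energy identity, trace regularity, critical limit passage), deferring these to~\cite{CEL2002} and~\cite{LT1993}.

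The one substantive difference is in how the abstract operator is set up. You place the nonlinear Robin condition directly into the domain $\mathcal{D}$ of $\mathcal{A}_0$ and verify monotonicity and the range condition by hand via Browder--Minty. The paper instead introduces the Robin--Laplacian $\triangle_R$ and the harmonic extension (Robin) map $R:H^s(\Gamma)\to H^{s+3/2}(\Omega)$, establishes the adjoint identity $R^{\ast}\triangle_R=-T$ with $T$ the trace, and writes the generator as
\[
\mathcal{A}\begin{pmatrix}u\\v\end{pmatrix}=\begin{pmatrix}-v\\-\triangle_R\bigl(u+R(g(Tv))\bigr)\end{pmatrix},
\]
so that the boundary nonlinearity appears explicitly as $T^{\ast}g(T\cdot)$ rather than being hidden in the domain. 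This Lasiecka--Triggiani formulation makes the monotonicity computation transparent (the damping contribution is manifestly $\langle g(Tv),Tv\rangle$) and gives a clean framework for the later center-manifold analysis in Section~\ref{sec:nonconvergence}, where the same $T^{\ast}T$ structure reappears. Your direct approach is equally valid and arguably more elementary; the Robin-map machinery buys a cleaner algebraic structure at the cost of an extra layer of notation.
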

\begin{proof}
First, we need to reformulate the origin problem~\eqref{wave_eq2} as an abstract first-order evolution equation. To do this, we introduce the Robin-Laplacian operator $\triangle_R: L^{2}(\Omega) \rightarrow L^2(\Omega)$ with the domain
\[
	D(\triangle_R) = \left\{ u\in H^2(\Omega): \partial_{\nu} u + u = 0\quad  \textrm{on $\Gamma$}\right\}.
\]
This densely defined operator is injective and self-adjoint. Moreover, it can be extend to a continuous operator,  which is still denoted by $\triangle_R$. This extension is the duality map from $ H^{1}(\Omega)$ to $ H^1(\Omega)^{\prime}$ given by
\[
	(-\triangle_R u, v) = \int_{\Omega}\nabla u\cdot \nabla v \mathrm{d}x + \int_{\Gamma}uv\mathrm{d}S
\]
for any $v\in H^{1}(\Omega)$.

Next we introduce the Robin map $R: H^{s}(\Gamma)\rightarrow H^{s+\frac{3}{2}}(\Omega)$, which is defined as
\[
	Rp = q \Longleftrightarrow \triangle q = 0\quad \textrm{in $\Omega$, and }  \partial_{\nu} q + q = p \quad \textrm{on $\Gamma$}.
\] 
By the elliptic theory in \cite{lions2012non} we know $R$ is continuously for all $s\in\mathbb{ R}$.
Given $p\in H^{-\frac{1}{2}}(\Gamma)$ and $v\in D(\triangle_{R})$ we have
\begin{equation*}
\begin{split}
	(\triangle_{R}v, Rp)  = (\triangle_{R}v, q) &= - \int_{\Omega}\nabla v\cdot \nabla q \mathrm{d}x + \int_{\Gamma}\partial_{\nu}v q\mathrm{d}S\\
	&= - \int_{\Gamma} v \partial_{\nu}q \mathrm{d}S + \int_{\Omega}v\triangle q \mathrm{d}x + \int_{\Gamma}\partial_{\nu}v q\mathrm{d}S \\
	&=- \int_{\Gamma} v \partial_{\nu}q \mathrm{d}S - \int_{\Gamma}v q\mathrm{d}S = -\langle v, p \rangle.
\end{split}
\end{equation*}
which implies
\[
	\langle R^{\ast}\triangle_{R}v, p \rangle = (\triangle_{R}v, Rp) = -\langle v, p \rangle.
\]
Due to the fact that $D(\triangle_R)$ is dense in $H^1(\Omega)$, the adjoint of the Robin map satisfies
\[
	R^{\ast}\triangle_{R}v = -T v = -v|_{\Gamma}, \quad \forall v\in H^1(\Omega),
\]
where $T$ is the trace operator.
Moreover, the fact that $\triangle_{R}$ is self-adjoint implies
\[
	\langle -Tv, p \rangle= \langle R^{\ast}\triangle_{R}v, p \rangle = (\triangle_{R}v, Rp)  = \langle v,\triangle_{R}Rp \rangle.
\]
Then we have $ T^\ast= - \triangle_{R}R$.

Based on the preliminary work above, we turn to introduce a nonlinear operator $\mathcal{A}$ with the domain
 \begin{equation*}
D(\mathcal{A}) = \left\{ 
\left(
\begin{array}{c}
u \\
v
\end{array}
\right)
\in H^1(\Omega)\times H^1(\Omega): u+R\left( g(T v)\right)\in D(\triangle_R)
\right\}
\end{equation*}
by setting 
 \begin{equation}\label{mono_operator}
\mathcal{A }=
\left(
\begin{array}{cc}
0 & -I\\
-\triangle_R & T^\ast \left( g(T\cdot)\right)
\end{array}
\right)
 \Longleftrightarrow
\mathcal{A}
\left(
\begin{array}{c}
u \\
v
\end{array}
\right)
=
\left(
\begin{array}{c}
-v\\
-\triangle_R(u+R\left( g(T v)\right)) 
\end{array}
\right).
\end{equation}
Since we have $D(\mathcal{A})\subseteq H^2(\Omega)\times H^1(\Omega)$, the mapping $v\mapsto  g(Tv)$ is continuous from $H^1(\Omega)$ to $H^{\frac{1}{2}}(\Gamma)$, and
\[
	0= \partial_{\nu}[u+R\left( g(T v)\right)] + u+R\left( g(T v)\right)= \partial_{\nu} u + u + g(v) \quad \textrm{on $\Gamma$},
\]
then it deduces that $D(\mathcal{A}) =\mathcal{D}$. 

Now, we can rewrite the wave equation~\eqref{wave_eq2} as the following operator theoretic formulation 
\begin{equation}\label{evolution_eq}
\partial_t\left(
\begin{array}{c}
u \\
u_t
\end{array}
\right)
+\mathcal{A}
\left(
\begin{array}{c}
u \\
u_t
\end{array}
\right)
+
\left(
\begin{array}{c}
0 \\
f_0(x, u)
\end{array}
\right)
=0.
\end{equation}
In a similar manner as in~\cite{CEL2002}, the conclusion of this lemma can be derived by using the theory of maximal monotone operators~\cite{Barbu2010} and an approximation method~\cite{LT1993}.
\end{proof}

\section{Infinitely many equilibria}
\label{sec:stationary}

An equilibrium $\varphi\in H^{2}(\Omega)$ to problem~\eqref{wave_eq2} is a classical solution to the following nonlinear elliptic equation with Robin boundary condition.
\begin{equation}\label{eq_stationary}
\left
    \{
        \begin{array}{rl}
            - \Delta \varphi +f_0(x, \varphi) = 0 &\textrm{in $ \Omega$,} \\
                 \partial_{\nu} \varphi + \varphi= 0 &\textrm{on $\Gamma$.}
        \end{array}
\right.
\end{equation}
From Lemma 3.1 in~\cite{Hao2006} we can deduce that an equilibrium $\varphi$ is also a critical point of the following functional in $H^{1}(\Omega)$
\[
	E(u) = \frac{1}{2}\int_{\Omega}|\nabla u|^2 \mathrm{d}x +  \frac{1}{2}\int_{\Gamma}|u|^2\mathrm{d}S + \int_{\Omega}F_0(x, u)\mathrm{d}x
\]
with $F_0(x, u) = \int_{0}^{u}f(x, s)\mathrm{d}s$.
The set of all the equilibria is denoted by $\mathcal{N}$. 
It is obvious that the structure of the set $\mathcal{N}$ depends on the nonlinearity $f_0$.

In the following, we are about to search the existence of infinitely many solutions for the stationary problem~\eqref{eq_stationary}.

\begin{theorem}[Infinitely many solutions] \label{equilibria:infinity}
Let $f_0(x, s) = \lambda(x)s - \hat{f}(x, s)$ and $\lambda(x)\in C(\bar{\Omega})$. Assume $\hat{f}(x, s)$ satisfies the following conditions.
\begin{itemize}
	\item[{\rm(1)}] For $2<q<2^{\ast}=\frac{2d}{d-2}$, 
		\[
			|\hat{f}(x, s)|\leqslant C\left( 1+ |s|^{q-1} \right).
		\]
	\item[{\rm(2)}] Given any $R>0$, there exists a constant $\kappa > 2$ such that
		\[
			s\hat{f}(x, s)-\kappa\hat{F}(s) \geqslant 0, \quad |s|\leqslant R
		\]
		with with $\hat{F}_0(s) = \int_{0}^{s}\hat{f}(t)\mathrm{d}t$, and
		\[
			\lim_{|s|\rightarrow\infty}\frac{s\hat{f}(x, s)-\kappa\hat{F}(s)}{s^2} = 1-\frac{\kappa}{2}.
		\]
	\item[{\rm(3)}] $\hat{f}(x, s)$ is odd, that is, $-\hat{f}(s)=\hat{f}(-s)$.
\end{itemize}
Then if $\lambda(x) > \frac{\kappa}{\kappa-2}$, the problem~\eqref{eq_stationary} has a sequence of solutions $\varphi_n\in H^1(\Omega)$ such that $E(\psi_n)\rightarrow\infty$ as $n$ goes to infinity.
\end{theorem}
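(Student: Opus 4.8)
The plan is to realize the solutions of~\eqref{eq_stationary} as critical points of the even functional $E$ and to extract infinitely many of them by a $\mathbb{Z}_2$-equivariant minimax argument, namely the symmetric version of the Mountain Pass Theorem of Ambrosetti and Rabinowitz. Writing $f_0(x,s)=\lambda(x)s-\hat f(x,s)$ gives the primitive $F_0(x,u)=\tfrac12\lambda(x)u^2-\hat F(x,u)$, so that
\[
E(u)=\tfrac12\|u\|_{H^1}^2+\tfrac12\int_\Omega\lambda(x)u^2\,\mathrm dx-\int_\Omega\hat F(x,u)\,\mathrm dx .
\]
First I would check that this setup is well-posed: the subcritical growth in~(1), $|\hat f(x,s)|\le C(1+|s|^{q-1})$ with $q<2^\ast$, together with the compact Sobolev embedding $H^1(\Omega)\hookrightarrow L^q(\Omega)$ and the compact trace $H^1(\Omega)\hookrightarrow L^2(\Gamma)$, makes $E$ of class $C^1$ on $H^1(\Omega)$ with $\langle E'(u),\phi\rangle=\int_\Omega\nabla u\cdot\nabla\phi+\int_\Gamma u\phi+\int_\Omega(\lambda u-\hat f(x,u))\phi$, whose critical points are exactly the weak solutions of~\eqref{eq_stationary}. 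The oddness in~(3) makes $\hat F$ even, hence $E$ is even with $E(0)=0$.

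Next I would set up the two geometric conditions. For the local geometry, condition~(2) forces $s\hat f(x,s)\ge\kappa\hat F(x,s)$ on $|s|\le R$ with $\kappa>2$, which rules out a quadratic lower bound near the origin and yields $\hat F(x,s)=o(s^2)$ as $s\to0$; consequently $E(u)\ge\tfrac12\|u\|_{H^1}^2-o(\|u\|_{H^1}^2)>0$ on a small sphere $\partial B_\rho$, producing a positive minimax level. For the geometry at infinity, on any fixed finite-dimensional subspace all norms are equivalent and the genuinely superquadratic part of $\hat F$ (the part beyond the asymptotically quadratic correction isolated by the limit in~(2)) dominates, so $E\to-\infty$ along every such direction. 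Together with evenness these are precisely the hypotheses of the symmetric Mountain Pass Theorem.

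The main obstacle is the Palais--Smale condition, where the structure of~(2) is more delicate than the classical Ambrosetti--Rabinowitz situation. For a sequence $(u_n)$ with $E(u_n)$ bounded and $E'(u_n)\to0$ I would test with $u_n$ and form $E(u_n)-\tfrac1\kappa\langle E'(u_n),u_n\rangle$, which equals
\[
\Big(\tfrac12-\tfrac1\kappa\Big)\Big(\|u_n\|_{H^1}^2+\int_\Omega\lambda u_n^2\Big)+\tfrac1\kappa\int_\Omega\big(u_n\hat f(x,u_n)-\kappa\hat F(x,u_n)\big)\,\mathrm dx .
\]
Unlike the standard case, the integrand $s\hat f-\kappa\hat F$ is nonnegative only for $|s|\le R$ and behaves like $(1-\tfrac\kappa2)s^2<0$ as $|s|\to\infty$; the negative contribution at infinity is of order $\int u_n^2$ and must be absorbed. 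This is exactly where the hypothesis $\lambda(x)>\frac{\kappa}{\kappa-2}$ enters: it makes the coefficient of $\int u_n^2$ in the resulting quadratic form strictly positive (equivalently $(\tfrac12-\tfrac1\kappa)\lambda>\tfrac12$), so the whole expression is bounded below by $c\|u_n\|_{H^1}^2-C$ and $(u_n)$ is bounded in $H^1(\Omega)$. Once boundedness is secured, compactness is routine: a weak limit $u_n\rightharpoonup u$ converges strongly in $L^q(\Omega)$ and in $L^2(\Gamma)$, so the lower-order and nonlinear terms in $\langle E'(u_n)-E'(u),u_n-u\rangle\to0$ vanish and one reads off $\|u_n-u\|_{H^1}\to0$; here the strict subcriticality $q<2^\ast$ is essential, since it is what guarantees the embedding is compact and thus prevents the concentration failure of~(PS) that would occur at the critical exponent.

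Finally I would invoke the symmetric Mountain Pass Theorem to obtain an unbounded sequence of critical values $c_n\to+\infty$ and corresponding critical points $\varphi_n$, the desired nontrivial weak solutions of~\eqref{eq_stationary}, with $E(\varphi_n)=c_n\to\infty$; a bootstrap using~(1) then upgrades them to classical solutions. The one point requiring care beyond the template is verifying that these linking levels are finite and genuinely diverge, which follows by pairing the unboundedness below on an increasing family of finite-dimensional subspaces with the uniform positivity on spheres in the complementary subspaces.
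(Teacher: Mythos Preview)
Your Palais--Smale argument is essentially the paper's: both form $E(u_n)-\tfrac1\kappa\langle E'(u_n),u_n\rangle$, split the integral of $s\hat f-\kappa\hat F$ according to condition~(2), and use $\lambda(x)>\tfrac{\kappa}{\kappa-2}$ to absorb the negative $(1-\tfrac\kappa2)\int u_n^2$ contribution coming from large $|s|$; after boundedness, the paper passes to strong convergence by writing $E'=A+B-G$ with $A$ invertible and $B,G$ compact, which is the operator-theoretic version of your pairing argument. Where you diverge is in the minimax machinery: the paper invokes the \emph{Fountain Theorem} of Willem, decomposing $H^1(\Omega)$ as $\mathcal{Y}_k\oplus\mathcal{Z}_k$, showing $\max_{\mathcal{Y}_k,\|\varphi\|=c_k}E\le0$ on finite-dimensional pieces and, crucially, $\inf_{\mathcal{Z}_k,\|\varphi\|=\beta_k}E\to\infty$ via the fact that $\alpha_k=\sup\{\|\varphi\|_{L^q}:\varphi\in\mathcal{Z}_k,\|\varphi\|_{H^1}=1\}\to0$. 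This second ingredient is what directly forces the critical values to diverge. Your symmetric Mountain Pass route replaces the $\mathcal{Z}_k$-estimate by a single small-sphere lower bound $E\ge\alpha>0$ (which you correctly extract from $\hat F(x,s)\le C|s|^\kappa$ near $0$, a consequence of $s\hat f\ge\kappa\hat F$); the divergence of the minimax levels then has to come from the genus-index mechanism rather than from an explicit $\beta_k$-computation.

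The weak point in your sketch is the geometry at infinity. You assert that ``the genuinely superquadratic part of $\hat F$ \dots\ dominates, so $E\to-\infty$'' on finite-dimensional subspaces, but condition~(2) does not by itself furnish a superquadratic lower bound on $\hat F$ at infinity---indeed the paper's own example $\hat f(x,s)=se^{-1/s^2}$ has $\hat F$ asymptotically quadratic. The paper handles this step by deriving, from the Ambrosetti--Rabinowitz-type inequality in~(2), the pointwise bound $\hat F(x,\varphi)\ge C_2|\varphi|^\kappa$ and then using equivalence of norms on $\mathcal{Y}_k$ to get $E(\varphi)\le\tfrac12\|\varphi\|_{H^1}^2+C_3\|\varphi\|^2-C_2\|\varphi\|_{L^\kappa}^\kappa\le0$ for $\|\varphi\|_{H^1}=c_k$ large. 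Your argument needs exactly this lower bound made explicit; the phrase ``genuinely superquadratic part'' is not justified from the hypotheses as stated and should be replaced by the concrete estimate $\hat F\ge C|s|^\kappa$ obtained by integrating $s\hat f\ge\kappa\hat F$.
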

It can be observed that the second condition imposed on \(\hat{f}\) is weaker than the corresponding condition in Theorem 1.1 of~\cite{Zhao2008}. Consequently, this theorem generalizes Theorem 1.1 in~\cite{Zhao2008}. To prove this theorem, we employ the Fountain Theorem proposed in~\cite{willem2012minimax}.
\begin{proof}[Proof of Theorem~\ref{equilibria:infinity}]
The subsequent proof involves two steps.

\textbf{Step 1}. Suppose that $\{\varphi_n\}\subset H^{1}(\Omega)$, for every $C_1>0$,
\[
	E(\varphi_n)\rightarrow C_1,\quad E^{\prime}(\varphi_n)\xrightarrow[]{H^{1}(\Omega)^{\ast}} 0, \quad n\rightarrow\infty.
\]
Here, $E^{\prime}(\cdot)$ is the first-order variation of the functional $E(\cdot)$.
We will show that $\{\varphi_n\}$ has a convergent subsequence in $H^{1}(\Omega)$.

From the second condition, for any $\epsilon > 0$ there exists $N>0$ such that 
\[
	\left|s\hat{f}(x, s)-\kappa\hat{F}(s) - \left( 1-\frac{\kappa}{2} \right)\right| \leqslant \epsilon, \quad |s| \geqslant N.
\] 
Thus, we obtain
\begin{equation}\label{thm1pf_1}
\begin{split}
	\frac{1}{\kappa}s\hat{f}(x, s)-\hat{F}(s) &\geqslant 0, \quad  |s| \leqslant R, \\
	\frac{1}{\kappa}s\hat{f}(x, s)-\hat{F}(s) &\geqslant \frac{1}{\kappa}-\frac{1}{2}-\frac{\epsilon}{\kappa}, \quad  |s| \geqslant N.
\end{split}
\end{equation}

Since we know
\begin{equation*}
\begin{split}
	&E(\varphi_n) - \frac{1}{\kappa}\langle E^{\prime}(\varphi_n), \varphi_n \rangle\\
	&=\left( \frac{1}{2} - \frac{1}{\kappa}\right)\left\{\int_{\Omega}\left[ |\nabla\varphi_n|^2 + \lambda(x)|\varphi_n|^2\right]\mathrm{d}x+\int_{\Gamma}|\varphi_n|^2\mathrm{d}S\right\}+ \int_{\Omega}\left[\frac{1}{\kappa}\hat{f}(x, \varphi_n)\varphi_n - \hat{F}(x, \varphi_n)\right]\mathrm{d}x,
\end{split}
\end{equation*}
where $E^{\prime}(\cdot)$ is the first derivative of $E$, then from~\eqref{thm1pf_1} we get
\begin{equation*}
	E(\varphi_n) - \frac{1}{\kappa}\langle E^{\prime}(\varphi_n), \varphi_n \rangle\geqslant\left( \frac{1}{2} - \frac{1}{\kappa}\right)\left\{\int_{\Omega}\left[ |\nabla\varphi_n|^2 + \lambda(x)|\varphi_n|^2\right]\mathrm{d}x+\int_{\Gamma}|\varphi_n|^2\mathrm{d}S\right\}
\end{equation*}
for $ |s| \leqslant N$, or
\begin{equation*}
\begin{split}
	&E(\varphi_n) - \frac{1}{\kappa}\langle E^{\prime}(\varphi_n), \varphi_n \rangle\\
	&\geqslant\left( \frac{1}{2} - \frac{1}{\kappa}\right)\left\{\int_{\Omega}|\nabla\varphi_n|^2 dx+\int_{\Gamma}|\varphi_n|^2\mathrm{d}S\right\}+ \left[(\lambda(x)-1)\left(\frac{1}{2} - \frac{1}{\kappa}\right) - \frac{\epsilon}{\kappa}\right]\int_{\Omega}|\varphi_n|^2\mathrm{d}x
\end{split}
\end{equation*}
for $ |s| \geqslant N$.

Due to $\lambda(x) > \frac{\kappa}{\kappa-2}$, then choosing $\epsilon \ll 1$, we have
\begin{equation}\label{thm1pf_2}
	C_1 +1 \geqslant E(\varphi_n) - \frac{1}{\kappa}\langle E^{\prime}(\varphi_n), \varphi_n \rangle\geqslant \left( \frac{1}{2} - \frac{1}{\kappa}\right)\|\varphi_n\|^2_{H^1}
\end{equation}
for $n$ large enough. By~\eqref{thm1pf_2} we obtain the boundedness of $\{\varphi_n\}$ in $H^{1}(\Omega)$, which implies that there exists a subsequence, which is still denoted by $\{\varphi_n\}$, and $\varphi\in H^1(\Omega)$ such that $\varphi_n$ converges weakly to $\varphi$ in $H^1(\Omega)$ as $n\rightarrow\infty$.

Let us introduce three linear operators
\[
	\langle A(\varphi), \phi \rangle = \int_{\Omega}\left[ \nabla\varphi \cdot\nabla\phi + \lambda(x)\varphi\phi \right]\mathrm{d}x,
\]
\[
	\langle B(\varphi), \phi \rangle = \int_{\Gamma}\varphi\phi \mathrm{d}S , \quad \langle G(\varphi), \phi \rangle = \int_{\Omega}\hat{f}(x, \varphi)\phi \mathrm{d}x.
\]
From~\cite{martinez2003weak} we know $A$ is continuously invertible, $B$ and $C$ are continuous and compact.
Then the first-order variation $E^{\prime}(\varphi_n) =  A(\varphi_n) + B(\varphi_n) - G(\varphi_n)$.
By the assumption $E^{\prime}(\varphi_n) \rightarrow 0$, we have
\[
	\varphi_n\rightarrow -A^{-1}\left( B(\varphi) - G(\varphi)\right)
\]
in $H^1(\Omega)$ as $n\rightarrow\infty$. Therefore, $\varphi_n\rightarrow \varphi$ in $H^{1}(\Omega)$ as $n\rightarrow\infty$. 

\textbf{Step 2}. Since $H^1(\Omega)$ is a reflexive and separable Banach space, then there are basis $e_j\in H^1(\Omega)$ and $e^{\ast}_j\in H^1(\Omega)^{\ast}$ satisfying
\[
	\langle e_i, e_j^{\ast} \rangle = 1,\quad i=j; \quad \langle e_i, e_j^{\ast} \rangle = 0,\quad i\neq j
\]
such that
\[
	H^1(\Omega) = \overline{\mathrm{span}\{e_j: j=1, 2, \cdots\}},\quad H^1(\Omega)^{\ast} = \overline{\mathrm{span}\{e_j^{\ast}: j=1, 2, \cdots\}}.
\]
Let us write $\mathcal{X}_j:=\mathrm{span}\{e_j\}$,   $\mathcal{Y}_k:= \oplus_{j=1}^{k}\mathcal{X}_j$ and $\mathcal{Z}_k:= \overline{\oplus_{j=k}^{\infty}\mathcal{X}_j}$.

If $\varphi\in \mathcal{Y}_k$ and $\|\varphi\|_{H^1(\Omega)} =c_k  > 0$, then it deduces from second and third conditions that there exists $C_2$ such that
\[
	\hat{F}_0(x, \varphi)\geqslant C_2|\varphi|^{\kappa},
\]
which implies
\begin{equation}\label{thm1pf_3}
\begin{split}
	E(\varphi)&=\frac{1}{2}\int_{\Omega}\left[ |\nabla \varphi|^2 + \lambda(x)|\varphi|^2\right] \mathrm{d}x +  \frac{1}{2}\int_{\Gamma}|\varphi|^2\mathrm{d}S - \int_{\Omega}\hat{F}_0(x, \varphi)\mathrm{d}x\\
	&\leqslant \frac{1}{2} \|\varphi\|^2_{H^1} + C_3 \|\varphi\|^2 -C_2\|\varphi\|^{\kappa}_{L^{\kappa}(\Omega)}.
\end{split}
\end{equation}
The constant $C_3$ above  is derived from the boundedness of $\lambda(x)$ in $\bar{\Omega}$. 
Since on the finite dimensional space $\mathcal{Y}_k$ all norms are equivalent, it implies from \eqref{thm1pf_3} and $\kappa > 2$ that 
\begin{equation}\label{thm1pf_4}
	\max_{\substack{\varphi\in \mathcal{Y}_k \\ \|\varphi\|_{H^1(\Omega)} =c_k}}E(\varphi) \leqslant 0
\end{equation}
for $c_k$ large enough.

By the first condition we have
\[
	\hat{F}_0(x, \varphi) \leqslant C_4\left(1+|\varphi|^q \right).
\]
Then the following estimate holds
\begin{equation}\label{thm1pf_5}
\begin{split}
	E(\varphi)&=\frac{1}{2}\int_{\Omega}\left[ |\nabla \varphi|^2 + \lambda(x)|\varphi|^2\right] \mathrm{d}x +  \frac{1}{2}\int_{\Gamma}|\varphi|^2\mathrm{d}S - \int_{\Omega}\hat{F}_0(x, \varphi)\mathrm{d}x\\
	&\geqslant \frac{1}{2} \|\varphi\|^2_{H^1} + \frac{\kappa}{2(\kappa-2)} \|\varphi\|^2 -C_4\|\varphi\|^{q}_{L^{q}(\Omega)} -C_4|\Omega|.
\end{split}
\end{equation}

Let us define a parameter $\alpha_k$ by
\[	
	\alpha_k:=\sup_{\substack{\varphi\in \mathcal{Z}_k \\ \|\varphi\|_{H^1(\Omega)}=1}}\|\varphi\|_{L^{q}(\Omega)}.
\]
By Lemma 3.5 in~\cite{Zhao2008}, we get $\alpha_k\rightarrow 0$ as $k\rightarrow\infty$. Thus, on $\mathcal{Z}_k$, it implies from~\eqref{thm1pf_5} that
\begin{equation}\label{thm1pf_6}
\begin{split}
	E(\varphi) &\geqslant \frac{1}{2} \|\varphi\|^2_{H^1} + \frac{\kappa}{2(\kappa-2)} \|\varphi\|^2 -C_4\|\varphi\|^{q}_{L^{q}(\Omega)} -C_4|\Omega|\\
	&\geqslant \frac{1}{2} \|\varphi\|^2_{H^1} - C_4 \alpha^q_k \|\varphi\|^{q}_{H^{1}(\Omega)} -C_4|\Omega|.
\end{split}
\end{equation}
If $\|\varphi\|_{H^1} =\beta_k$, then \eqref{thm1pf_6} gives
\begin{equation}\label{thm1pf_7}
\begin{split}
	E(\varphi) &\geqslant \frac{1}{2}\beta_k^2 - C_4 \alpha^q_k \beta_k^q -C_4|\Omega|\\
	&=\frac{1}{2}\left(C_4 q\alpha_k^q\right)^{\frac{2}{2-q}}- C_4 \alpha^q_k \left(C_4 q\alpha_k^q\right)^{\frac{q}{2-q}} -C_4|\Omega| \\
	&=\frac{1}{2}\left(C_4 q\alpha_k^q\right)^{\frac{2}{2-q}}- \frac{1}{q}\left(C_4 q\alpha_k^q\right)^{\frac{2}{2-q}} -C_4|\Omega|\\
	&=\frac{q-2}{2q}\left(C_4 q\right)^{\frac{2}{2-q}} \alpha_k^{\frac{2q}{2-q}}- C_4|\Omega|
\end{split}
\end{equation}
by choosing $\beta_k = \left(C_4 q\alpha_k^q\right)^{\frac{1}{2-q}}$. Due to $2<q$,  we obtain
\begin{equation}\label{thm1pf_8}
	\inf_{\substack{\varphi\in \mathcal{Z}_k \\ \|\varphi\|_{H^1(\Omega)} =\beta_k}}E(\varphi) \rightarrow \infty
\end{equation}
as $k\rightarrow\infty$

\textbf{Step 3}. Based on the result in Step 1, \eqref{thm1pf_4} and \eqref{thm1pf_8}, it deduces from the Fountain theorem in~\cite{willem2012minimax} that the elliptic equation~\eqref{eq_stationary} has a sequence of solutions $\varphi_n\in H^1(\Omega)$ such that $E(\varphi_n)\rightarrow\infty$ as $n$ goes to infinity.
\end{proof}

Now, we begin to construct a concrete example of $f_0$ that satisfies assumptions~(A1) and~(A2), as well as the three conditions in Theorem~\ref{equilibria:infinity}.

\begin{itemize}
\item \textbf{Example}. If $s\neq0$, we set
	\begin{equation}\label{example}
		f_0(x, s) = \lambda(x)s - se^{-\frac{1}{s^2}}
	\end{equation}
	with $\lambda(x)>1$. Moreover, when $s=0$, we set $f_0(x, 0) = 0$, $f^{\prime}_{0}(x, 0) = \lambda(x)$ and the $j-$th order derivative $f^{(j)}_{0}(x, 0)$, $j=2, 3, \cdots$, of $f_0(x, s)$ at $s=0$ to be equal to $0$.
\end{itemize}

Thanks to
\[
	 \liminf_{|s| \rightarrow \infty}\frac{f_{0}(\cdot, s)}{s} =  \liminf_{|s| \rightarrow \infty}\left[\lambda(x) - e^{-\frac{1}{s^2}}\right] =  \lambda(x) -1> 0,
\]
we know $f_0(x, s)$ defined above satisfies the assumption (F1). Since we also have
\[
	f^{\prime}_{0}(x, s) = \lambda(x) - e^{-\frac{1}{s^2}} - \frac{2}{s^2}e^{-\frac{1}{s^2}},\quad 
	f^{\prime\prime}_{0}(x, s) = \frac{2}{s^3} e^{-\frac{1}{s^2}} - \frac{4}{s^5}e^{-\frac{1}{s^2}}
\]
for $s\neq0$, then we obtain
\[
	|f^{\prime\prime}_{0}(x, s)| < C\left( 1+ |s|\right), \quad  \forall s \in \mathbb{R},
\]
which means $f_0(x, s)$ satisfies the assumption (F2).
Furthermore, let $\hat{f}(x, s) = se^{-\frac{1}{s^2}}$. Then we have the following estimates, which means that $\hat{f}(x, s)$ satisfies the three conditions in Theorem~\ref{equilibria:infinity}. 
\begin{itemize}
\item[-] For $2<q< 6$, we have $|\hat{f}(s)|\leqslant C\left( 1+ |s|^{q-1} \right)$.
\item[-] Given any $R>0$, choosing $\kappa < 2+ \frac{2}{R^2}$ we have
\begin{equation*}
\begin{split}
	\left(s^{2}e^{-\frac{1}{s^2}} - \kappa\int^{s}_{0}\hat{f}(x, t) dt\right)^{\prime} &=2se^{-\frac{1}{s^2}}+\frac{2}{s}e^{-\frac{1}{s^2}}-\kappa s e^{-\frac{1}{s^2}} \\
	&= se^{-\frac{1}{s^2}}\left( 2+ \frac{2}{s^2} - \kappa \right)\\
	& > se^{-\frac{1}{s^2}}\left( \frac{2}{s^2} -  \frac{2}{R^2} \right) >0
\end{split}
\end{equation*}
for $|s| < R$. Then we have $s\hat{f}(x, s)-\kappa\hat{F}(s) \geqslant 0$.
\begin{equation*}
\begin{split}
	\lim_{|s|\rightarrow\infty}\frac{s^{2}e^{-\frac{1}{s^2}} - \kappa\int^{s}_{0}\hat{f}(x, t) dt}{s^2}& = \lim_{|s|\rightarrow\infty}\frac{s^{2}e^{-\frac{1}{s^2}} - \frac{ \kappa}{2}\Gamma\left(-1, \frac{1}{s^2}\right)}{s^2} \\
	&= \lim_{|s|\rightarrow\infty}\left( e^{-\frac{1}{s^2}}  - \frac{ \kappa}{2}\frac{\Gamma\left(-1, \frac{1}{s^2}\right)}{s^2}\right)\\
	&=1-\frac{\kappa}{2},
\end{split}
\end{equation*}
where $\Gamma(\cdot, \cdot)$ is the upper incomplete gamma function.
\item[-] By $-\hat{f}(x, s)= -se^{-s^2} =\hat{f}(x, -s)$, we know $\hat{f}(x, s)$ is odd.
\end{itemize}

\begin{remark}
The convergence results in~{\rm\cite[Corollary 1.4]{CEL2002}} and~{\rm\cite[Corollary 2.8]{chueshov2004}} cannot be applied to the equation~\eqref{wave_eq2} with this example of interior nonlinearity.
\end{remark}

\section{Non-convergence to an equilibrium}
\label{sec:nonconvergence}

In this section, we consider the form of initial-boundary-value problem for the wave equation~\eqref{wave_eq2} with $g(s)=s$ as follows
\begin{equation} \label{equ:nonconvergence}
\begin{split}
u_{tt} -\triangle u + f_0(x, u)&=0,\quad x\in\Omega,\quad t>0, \\
\partial_{\nu} u + u +u_t &= 0,\quad x\in\Gamma,\quad t>0,\\
u|_{t=0} = u_{0}(x), \quad u_{t}|_{t=0} &= u_{1}(x), \quad x\in \Omega,
\end{split}
\end{equation}
where the initial data $(u_0, u_1)^\top$ belongs to $\mathcal{D}$.
Define the $\omega$-limit set $\omega(u)$ of solution $u$ to equation~\eqref{equ:nonconvergence} by
\[
	\omega(u) = \{\varphi\in \mathcal{N}: \textrm{there exists a sequence $t_n\rightarrow \infty$ such that $u(t, \cdot) \rightarrow\varphi$ in $H^1(\Omega)$} \}.
\]
If $\omega(u)$ consists of just one equilibrium, that is,  $\omega(u) = \{\varphi\}$, then we say that the solution $u$ converges; otherwise, $u$ is said to be non-convergent.

To find what nonlinearities implies a non-convergent solution of equation~\eqref{equ:nonconvergence}, we need some preparations.

\begin{lemma} \label{eigenvalue:multiple}
Let $\Omega_1:=V_1\times V_2$, where $V_1=\{x\in\mathbb{R}^2: |x|<1\}$ and $V_2 = (0, \frac{5}{2}\pi)$. Assume that the domain $\Omega$ contains the cylinder $\Omega_1$.
Then there exists a smooth function $a(x)$ on $\bar{\Omega}$ such that the following statements hold.
\begin{itemize}
\item[{\rm(1)}] $0$ is an eigenvalue of operator $\mathcal{B}= \triangle_R + a(x)$ with the domain $D(\mathcal{B}) = D(\triangle_R)$, and it has a multiplicity of $2$.
\item[{\rm(2)}] Define $\Phi(x, \theta)$ by
\begin{equation}
\Phi(x, \theta)=\psi_1(x)\cos\theta + \psi_2(x)\sin\theta
\end{equation}
with $\psi_1$ and $\psi_2$ the eigenfunctions of operator $\mathcal{B}$ corresponding to the eigenvalue $0$. 
Then for each $\theta\in\mathbb{R}$ the function $\Phi(\cdot, \theta)$ assumes its positive maximum $M(\theta)$ over a subdomain $\bar{\Omega}_0$ of $\Omega$ at a unique point $x(\theta) \in \Omega_0$. Moreover, the Hessian matrix $\nabla^2\Phi(x(\theta), \theta)$ of $\Phi(\cdot, \theta)$ at $x(\theta)$ is negative definite, the map $\theta\rightarrow x(\theta)$ is injective on $[0, 2\pi)$ and $x^{\prime}(\theta)\neq 0$.
\end{itemize}
\end{lemma}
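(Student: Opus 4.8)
The plan is to realize the double eigenvalue through the rotational symmetry of the cross-sectional disk $V_1$, which is the only clean mechanism that simultaneously produces an even-dimensional kernel and eigenfunctions whose superpositions rotate. Write the cylinder points as $(r,\vartheta,z)$ with $(r,\vartheta)$ polar coordinates on $V_1$ and $z\in V_2$. I would look for the kernel in the first angular ($k=1$) sector, i.e. for a pair
\[
	\psi_1 = G(r,z)\cos\vartheta, \qquad \psi_2 = G(r,z)\sin\vartheta,
\]
with a single radial--axial profile $G$. Because $\triangle$ commutes with rotations of $V_1$, one computes $\triangle\psi_1/\psi_1=\triangle\psi_2/\psi_2$, and this common ratio depends only on $(r,z)$; hence a single rotationally symmetric potential $a=a(r,z)$ places both $\psi_1,\psi_2$ in the kernel of $\triangle_R+a$ on the cylinder. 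Separating variables further, $G=\rho(r)w(z)$ with $\rho=J_1(j_{1,1}r)$ the first angular Bessel mode on the unit disk and $w(z)=\sin(2z/5)$ positive on $V_2=(0,\tfrac52\pi)$ forces $a$ to equal the constant $j_{1,1}^2+\tfrac{4}{25}$ there; the specific length $\tfrac52\pi$ is chosen precisely so that $w>0$ with a unique nondegenerate interior maximum at $z^\ast=\tfrac54\pi$.

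Since the operator is posed on all of $\Omega$ with a Robin condition on $\Gamma$, and since by strong unique continuation no nontrivial solution of $(\triangle+a)\psi=0$ can be compactly supported, the eigenfunctions must be global. I would therefore extend $a$ by setting it very negative, $a\equiv-M$, on $\Omega\setminus\Omega_1$ (keeping it rotationally symmetric near the axis), so that eigenfunctions with eigenvalue near $0$ decay exponentially away from $\Omega_1$ and, on a fixed tubular subdomain $\Omega_0$ around the circle $\{r=r^\ast,\ z=z^\ast\}$, converge in $C^2$ to the separated modes above as $M\to\infty$. The operator $\mathcal B=\triangle_R+a$ is a bounded self-adjoint perturbation of $\triangle_R$ with compact resolvent, so its spectrum is discrete and governed by eigenvalue perturbation theory; the deep well produces a large spectral gap isolating the relevant $k=1$ pair, which I would use both to rule out any other zero eigenvalue and to pin the multiplicity at exactly $2$. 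A single scalar parameter (the constant value of $a$ on $\Omega_1$, or $M$) would then be tuned by the intermediate value theorem to slide this pair of eigenvalues through $0$.

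The last point, \emph{making the near-zero pair into an exact double eigenvalue at exactly $0$}, is the main obstacle, and it is where the rotational symmetry must be exploited rather than merely approximated: the exponentially small tails of the eigenfunctions into the non-symmetric part of $\Omega$ generically split the degeneracy. I would close this either by taking the relevant geometry exactly invariant under the $90^\circ$ rotation $\vartheta\mapsto\vartheta+\tfrac\pi2$ of the axis --- a $\mathbb Z_4$-symmetry of $(\Omega,a)$ already forces the $k=1$ sector into the $2$-dimensional real irreducible representation, protecting the equality of the two eigenvalues --- after which one scalar tuning places the protected double eigenvalue at $0$; or, for a general $\Omega$, by a two-parameter version of first-order perturbation theory, choosing perturbations $b_1,b_2$ of $a$ so that the Jacobian $\big(\partial\mu_i/\partial s_j\big)=\big(\int b_j\psi_i^2\big)$ is invertible and solving $\mu_1=\mu_2=0$ simultaneously via the implicit function theorem.

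Finally, the geometric assertions in part (2) are \emph{open} conditions and therefore robust. For the exact separated modes one has $\Phi(x,\theta)=G(r,z)\cos(\vartheta-\theta)$, whose positive maximum over $\bar\Omega_0$ is attained exactly where $\cos(\vartheta-\theta)=1$ and $G$ is maximal, i.e. at the single point $x(\theta)=(r^\ast\cos\theta,\,r^\ast\sin\theta,\,z^\ast)$; this yields $x^\prime(\theta)=r^\ast(-\sin\theta,\cos\theta,0)\neq0$, injectivity on $[0,2\pi)$ since $r^\ast>0$, and a negative-definite Hessian because $G$ has a nondegenerate maximum at $(r^\ast,z^\ast)$ while $\partial_\vartheta^2\Phi=-G<0$ there. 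As these are finitely many strict inequalities, they persist for the true eigenfunctions once $M$ is large enough that the $C^2$-distance to the separated modes on $\bar\Omega_0$ is small, which completes the verification.
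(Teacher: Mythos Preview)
Your overall architecture matches the paper's: exploit the rotational symmetry of the disk cross-section to produce a $k=1$ angular pair, localize via a deep potential well outside $\Omega_1$, then perturb to force an exact double kernel; finally read off part~(2) from the explicit formula $\Phi=G(r,z)\cos(\vartheta-\theta)$ and transfer the strict inequalities by $C^2$-closeness on a fixed interior subdomain. The differences are technical rather than strategic.

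First, the reference profiles differ. The paper works on $\Omega_1$ with the \emph{Robin} problem and takes $\psi_i=cJ_1(\mathrm{y}_1 r)\dfrac{x_i}{r}H(x_3)$ with $\mathrm{y}_1$ the first root of $J_0$ (so that the Robin condition $\mathrm{y}J_1'(\mathrm{y})+J_1(\mathrm{y})=0$ holds at $r=1$) and $H(x_3)=\cos x_3-\sin x_3$ the principal Robin mode on $V_2$; the constant $a_0=\mu_2+\upsilon_1$ is then exact on the cylinder. You instead take Dirichlet-type separated modes $J_1(j_{1,1}r)\sin(2z/5)$. In the deep-well limit the effective condition on $\partial\Omega_1$ is Dirichlet, so your choice is a legitimate limiting profile for the general $\Omega\supsetneq\Omega_1$; it just does not cover the base case $\Omega=\Omega_1$ that the paper treats first.

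Second, and more substantively, the paper does not argue the exact double zero by hand: it invokes Prizzi's inverse-spectral results (Theorems~2.5 and~3.4 in \cite{prizzi1998inverse}), which for $a_k(x)=a_0+\gamma_k b(x)+\iota_k(x)$ with $b=0$ on $\Omega_0$, $b<0$ outside $\bar\Omega_1$, $\gamma_k\to\infty$, directly supply a correction $\iota_k\to0$ in $L^\infty$ so that the second eigenvalue of $\triangle_R+a_k$ is exactly $0$ with multiplicity exactly~$2$, together with $H^1$-convergence of the eigenfunctions to the trivial extensions $\tilde\psi_i$. This black box replaces both of your proposed mechanisms. Of your two, the $\mathbb{Z}_4$-symmetry route imposes an extra hypothesis on $\Omega$ not present in the statement, and your two-parameter IFT route is correct in spirit but the Jacobian you write, $\partial\mu_i/\partial s_j=\int b_j\psi_i^2$, is only valid where $\mu_1\neq\mu_2$; at the target point one must pass to smooth symmetric functions of the pair (trace and determinant of the $2\times2$ perturbation matrix) before applying the implicit function theorem. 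The Prizzi machinery is the cleaner way through.

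For part~(2) your argument coincides with the paper's: both identify $x(\theta)=(r^\ast\cos\theta,\,r^\ast\sin\theta,\,z^\ast)$ for the exact model, check nondegeneracy, and then use interior elliptic estimates to get $C^2(\bar\Omega_0)$-convergence of $\Phi^k(\cdot,\theta)\to\Phi(\cdot,\theta)$ uniformly in $\theta$, so the open conditions persist for large~$k$.
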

\begin{proof}
Let $\mu_2$ be the second eigenvalue of $-\triangle_R$ defined on $V_1$, and let $\upsilon_1 = 1$ be the principal eigenvalue of $-\triangle_R$ defined on $V_2$. Then $a_0 := \mu_2 + \upsilon_1$ is the second eigenvalue of $-\triangle_R$ defined on $\Omega_1$, which is of multiplicity $2$. This means that statement (1) holds for the case $\Omega =\Omega_1$. 

To verify statement (2) in the case $\Omega =\Omega_1$, we compute the corresponding orthonormal basis of $\mathrm{Ker}\mathcal{B}$ as follows
\begin{equation} \label{explicit_eigenfunctions}
	\psi_i(x_1, x_2, x_3) = cJ(r)\frac{x_i}{r}H(x_3), \quad i=1,2,
\end{equation}
where $c$ is a suitable normalizing constant, $r= \sqrt{x_1^2 + x_2^2}\in(0, 1)$, $J(r)\frac{x_i}{r}$ is the eigenfunction of $-\triangle_R$ defined on $V_1$  corresponding to $\mu_2$, and $H(x_3)$ is the eigenfunction of $-\triangle_R$ defined on $V_2$ corresponding to $\upsilon_1$. In particular, $H(\cdot)$ is the solution of the following equation
\begin{equation}\label{eigenfunceion_3dim}
\left.
\begin{array}{c}
-H^{\prime\prime}(x_3) =   H(x_3)\\
-H^{\prime}(0) + H(0) = 0\\
H^{\prime}(\frac{5}{2}\pi) + H(\frac{5}{2}\pi) = 0
\end{array}
\right\}
\Longrightarrow
H(x_3) = \cos x_3- \sin x_3,
\end{equation}
and $J(r) = J_1(\mathrm{y}_{1}r)$. Here, $J_1(\cdot)$ is the Bessel function of the first kind, and $\mathrm{y}_1$ is the first positive root of $J_0(\cdot)$, which is derived from the following boundary condition at $r=1$ and iteration formula of the Bessel function
\[
\left.
\begin{array}{c}
\mathrm{y}J^{\prime}_1(\mathrm{y}) + J_1(\mathrm{y}) = 0\\
J^{\prime}_1(\mathrm{y}) = J_0(\mathrm{y}) - \frac{J_1(\mathrm{y})}{\mathrm{y}}
\end{array}
\right\}
\Longrightarrow
J_0(\mathrm{y}) = 0.
\]
By using the polar coordinates $x_1 = r\cos\vartheta$ and $x_2 = r\sin\vartheta$, then $\Phi(x, \theta)$ can be rewritten as
\begin{equation*}
\begin{split}
\Phi(x, \theta) &= cJ(r)\left( \frac{x_1}{r} \cos\theta + \frac{x_2}{r}\sin\theta \right)  H(x_3)\\
&=cJ(r)\left( \cos\vartheta \cos\theta + \sin\vartheta\sin\theta \right) H(x_3)\\
&= cJ(r)\cos(\vartheta - \theta) H(x_3).
\end{split}
\end{equation*}
Due to the properties of the Bessel function, we know there exists a unique critical point $r^{\ast}\in(0, 1)$ such that $J_1^{\prime}(\mathrm{y}_{1}r^{\ast}) = 0$ and $J_1^{\prime\prime}(\mathrm{y}_{1}r^{\ast}) < 0$. Moreover, from~\eqref{eigenfunceion_3dim} we also know $H(\cdot)$ has a unique critical point $x^{\ast}_3 = \frac{7}{4}\pi \in(0, \frac{5}{2}\pi)$ satisfying $H^{\prime}(x^{\ast}_3) = 0$ and $H^{\prime\prime}(x^{\ast}_3)=-\sqrt{2} < 0$.
Consequently, $\Phi(\cdot, \theta)$ has a local maximum point
\[
	x(\theta) = (r^{\ast}\cos\theta, r^{\ast}\sin\theta,  \frac{7}{4}\pi).
\]
Let $\Omega_0$ be any subdomain of $\Omega_1$, which contains the circle $\{x(\theta): \theta\in[0, 2\pi] \}$. Then statement (2) holds true for such $\Omega_0\subset \Omega_1$.

Now, we proceed to extend the previous result to the case of a general domain $\Omega\supsetneqq\Omega_1$. To do that, let us introduce a smooth function $b(x)$ satisfying $b=0$ on $\Omega_0$ and $b<0$ on $\mathbb{R}^3\backslash\bar{\Omega}_1$, and a sequence $\gamma_k\in\mathbb{R}$ with $\gamma_k\rightarrow\infty$ as $k\rightarrow\infty$. From Theorem 2.5 and Theorem 3.4 in \cite{prizzi1998inverse} it implies that there exists $k_0$ and a sequence functions $\iota_k(x)\in C^\infty(\mathbb{R}^3)$, $k = k_0, k_0 +1, \cdots$, such that $\|\iota_k(x)\|_{L^\infty(\Omega)}\rightarrow 0$ as $k\rightarrow\infty$, and the following properties hold.
\begin{itemize}
\item[(I)] If $\mathcal{B}_k = \triangle_{R} + a_k(x)$ with $a_k(x) = a_0 + \gamma_k b(x) + \iota_k(x)$, then the second eigenvalue of $\mathcal{B}_k$ is zero, which is of multiplicity $2$.
\item[(II)] There is an $L^2(\Omega)$-orthonormal basis $\psi^{k}_i$, $i=1, 2$, of $\mathrm{Ker}\mathcal{B}_k$ such that $\psi^{k}_i\rightarrow\tilde{\psi}_i$, $i=1, 2$, in $H^1(\Omega)$ as $k\rightarrow\infty$, where $\tilde{\psi}_i$, $i=1, 2$, are trivial extensions of $\psi_i$ given in~\eqref{explicit_eigenfunctions}. 
\end{itemize}
Based on this claim, we know statement (1) is satisfied if $\mathcal{B}=\mathcal{B}_k$. Set 
\[
\Phi^k(x, \theta) = \psi^k_1(x)\cos\theta + \psi^k_2(x)\sin\theta.
\]
From property (II) and the standard elliptic interior estimates it follows that the following convergence holds in $C^2(\bar{\Omega}_0)$ 
\[
	\Phi^k(x, \theta)\rightarrow \Phi(x, \theta), \quad k\rightarrow\infty,
\]
which is also uniform with respect to $\theta\in[0, 2\pi]$. Thus, for each $\theta_0\in\mathbb{R}$ there exists a neighborhood $W$ of $\theta_0$ and $k_0 = k_0(\theta_0)$ such that one has the following results: (i) for $k>k_0$ and $\theta\in W$, $\Phi^k(\cdot, \theta)$ possesses its positive maximum over $\bar{\Omega}_0$ at a unique point $x^{k}(\theta)\in\Omega_0$, and its Hessian matrix $\nabla^2\Phi^k(x^k(\theta), \theta)$ is negative definite; (ii) $x^k(\theta)\rightarrow x(\theta)$ and $\frac{\mathrm{d}}{\mathrm{d}\theta}x^k(\theta)\rightarrow \frac{\mathrm{d}}{\mathrm{d}\theta}x(\theta)$ as $k$ goes to infinity uniformly for $\theta\in W$. 

By the periodicity of maps $\theta\mapsto\Phi^k(\cdot, \theta)$, $\theta\mapsto x^k(\theta)$, and the compactness argument, we know $k_0$ can be chosen independent of $\theta_0$.Therefore, if we choose $k$ to be sufficiently large, then statement~(2) holds for $\Phi(x, \theta)$ and $x(\theta)$ replaced by $\Phi^k(x, \theta)$ and $x^k(\theta)$.
\end{proof}

\begin{remark}
The map $\theta\rightarrow M(\theta)$ is a positive $2\pi-$periodic function of class $C^{\infty}$.
\end{remark}

Throughout this section, we assume that the domain $\Omega$ satisfies the condition in Lemma~\ref{eigenvalue:multiple}. Similarly to equation~\eqref{evolution_eq}, we can also reformulate the wave equation~\eqref{equ:nonconvergence} as the following abstract form in $\mathcal{D}$
\begin{equation} \label{evolu_equ:nonconvergence}
\partial_t\left(
\begin{array}{c}
u \\
u_t
\end{array}
\right)
+\tilde{\mathcal{A}}
\left(
\begin{array}{c}
u \\
u_t
\end{array}
\right)
+
\mathcal{F}\left(
\begin{array}{c}
u \\
u_t
\end{array}
\right)
=0
\end{equation}
with
 \begin{equation*}
\tilde{\mathcal{A}}\left(
\begin{array}{c}
u \\
u_t
\end{array}
\right)
=
\left(
\begin{array}{c}
-u_t\\
-\mathcal{B}u + T^{\ast}\left(Tu_t\right)
\end{array}
\right),
\quad
\mathcal{F}\left(
\begin{array}{c}
u \\
u_t
\end{array}
\right)
=\left(
\begin{array}{c}
0 \\
\tilde{f}_a(u)
\end{array}
\right),
\end{equation*}
and $\tilde{f}_a: u\mapsto f_a(x, u) := a(x)u + f_0(x, u)$. Note that $\tilde{\mathcal{A}}$ is a self-adjoint, and there exist constants $C>0$ and $\lambda>\max\{|a(x)|: x\in\bar{\Omega}\}$ such that the following estimate holds
\[
\left\|(\lambda\mathcal{I} + \tilde{\mathcal{A}})\left(
\begin{array}{c}
u \\
v
\end{array}
\right)\right\|_{\mathcal{H}}\geqslant C\|(u, v)^\top\|_{\mathcal{H}}
\]
for $(u, v)^\top\in\mathcal{D}$.
Consequently, the resolvent set of $\tilde{\mathcal{A}}$ is nonempty. In terms of the fact that $H^2(\Omega)\times H^1(\Omega)$ is compact embedded into $\mathcal{H}$, the operater $\tilde{\mathcal{A}}$ has a compact resolvent. It follows from the Fredholm alternative that the zero point is an isolated point in the spectrum of $\tilde{\mathcal{A}}$, and $\mathrm{Ker}\tilde{\mathcal{A}}$ is finite dimensional.  
By Lemma~\ref{eigenvalue:multiple}, we can introduce an $L^2(\Omega)$-orthonormal basis of $\mathrm{Ker}\mathcal{B}\subset \mathcal{D}$, which are still denoted by $\psi_1$ and $\psi_2$. Due to
 \begin{equation*}
 0=
\tilde{\mathcal{A}}\left(
\begin{array}{c}
u \\
v
\end{array}
\right)
\Longrightarrow
\mathcal{B}u=0, \quad v=0,
\end{equation*}
then the eigenspace $E_1$ of $\tilde{\mathcal{A}}$ associated to the eigenvalue $0$ is given by
\[
E_1= \mathrm{Ker}\tilde{\mathcal{A}} = 
\left\{
\left(
\begin{array}{c}
\xi\cdot\psi \\
0
\end{array}
\right):
\xi = (\xi_1, \xi_2)\in\mathbb{R}^2, \psi = (\psi_1, \psi_2)
\right\}.
\]
We denote by $\mathcal{P}: \mathcal{H}\rightarrow E_1$ the spectral projection of $\tilde{\mathcal{A}}$ associated with the spectral set $\{0\}$, and $E_2$ the ranges of $\mathcal{I}-\mathcal{P}$. 
From the growth assumption (A1) on $f_0$ one can easily see that the Nemytski mapping $s\mapsto f_0(x, s)$ is locally Lipschitz from $H^2(\Omega)$ to $H^1(\Omega)$. Furthermore, the assumption that $f_0\in C^m_{\mathrm{b}}(\bar{\Omega}\times\mathbb{R})$ implies from $H^2(\Omega)\hookrightarrow  C(\bar{\Omega})$ that $f_a$ is also of class $C^m_{\mathrm{b}}(\bar{\Omega}\times\mathbb{R})$. Finally, we have the following equality
\[
	\|\mathcal{F}\|_{C^{m-1}_{\mathrm{b}}(\mathcal{D}\rightarrow \mathcal{D})}\leqslant \|\tilde{f}_a\|_{C^{m-1}_{\mathrm{b}}(H^2\rightarrow H^1)} \leqslant \|f_a\|_{C^m_{\mathrm{b}}(\bar{\Omega}\times \mathbb{R})}. 
\]

\begin{lemma}[The centre-manifold reduction]\label{cm_reduction}
If $f_0$ belongs to $C^m_{\mathrm{b}}(\bar{\Omega}\times\mathbb{R})$ and $\|\mathcal{F}\|_{C^{1}_{\mathrm{b}}(\mathcal{D}\rightarrow \mathcal{D})} < c_m$, then there exists a $C^m_{\mathrm{b}}$ map 
\[
	\Lambda: \mathbb{R}^2\rightarrow \mathcal{D},\quad  \xi\mapsto \Lambda(\xi)=(\Lambda_1(\xi), \Lambda_2(\xi))^{\top}
\]
with image contained in $\mathcal{D}\cap E_2$ such that the following properties hold.
\begin{itemize}
\item[{\rm(1)}] The equation~\eqref{evolu_equ:nonconvergence} possesses a unique global center manifold
\[
\mathcal{M}_{f_0} = \left\{
\left(
\begin{array}{c}
\xi\cdot\psi \\
0
\end{array}
\right)
+\Lambda(\xi): \xi\in\mathbb{R}^2 
\right\}.
\]
\item[{\rm(2)}] If $U_0 \in \mathcal{M}_{f_0}$ for some $\xi_0\in\mathbb{R}^2$, then the solution of equation \eqref{evolu_equ:nonconvergence} subject to the initial condition $U(0) = U_0$ is given by
\[
U(t):=U(\xi(t))=
\left(
\begin{array}{c}
\xi(t)\cdot\psi \\
0
\end{array}
\right)
+\Lambda(\xi(t)),
\]
where $\xi(t)$ is the solution of the following ODE
\begin{equation} \label{cmr_ode}
\dot{\xi}(t) = h(\xi(t)), \quad \xi(0) = \xi_0
\end{equation}
with the $C^1$ function $h: \mathbb{R}^2\rightarrow \mathbb{R}^2$ given by
\begin{equation} \label{cmr_ode1}
\left(
\begin{array}{c}
h(\xi)\cdot\psi \\
0
\end{array}
\right)
=
-\mathcal{P}\mathcal{F}(U).
\end{equation}
\item[{\rm(3)}] For each $\xi\in\mathbb{R}^2$, one has
\begin{equation}\label{cmr_ode2}
\begin{split}
	\nabla_{\xi} U \cdot h(\xi) + \tilde{\mathcal{A}}U(t) + \mathcal{F}(U(t)) &= 0,\\
	\nabla_{\xi} \Lambda \cdot h(\xi) + \tilde{\mathcal{A}}\Lambda(\xi) + (\mathcal{I}-\mathcal{P})\mathcal{F}(U(\xi)) &= 0.
\end{split}
\end{equation}
\item[{\rm(4)}] There is a positive constant $C$ independent of $f$ such that
\begin{equation}\label{cmr_estimate}
	\|\Lambda\|_{C^{j-1}_{\mathrm{b}}(\mathbb{R}^2\rightarrow \mathcal{D})} \leqslant C\|\mathcal{F}\|_{C^{1}_{\mathrm{b}}(\mathcal{D}\rightarrow \mathcal{D})}
\end{equation}
 for $j=1, 2, \cdots, m-1$.
\end{itemize}
\end{lemma}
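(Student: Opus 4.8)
The plan is to construct $\Lambda$ by the Lyapunov--Perron method, building on the spectral information assembled just before the lemma: $\tilde{\mathcal{A}}$ has compact resolvent, $0$ is an isolated spectral point, and $\mathrm{Ker}\,\tilde{\mathcal{A}} = E_1$ is two-dimensional with $\mathcal{P}$ the associated spectral projection and $E_2 = \mathrm{Ran}(\mathcal{I}-\mathcal{P})$. First I would sharpen this into a dichotomy. Since $e^{-t\tilde{\mathcal{A}}}$ is a contraction semigroup generated by the (m-accretive, damped-wave) operator $-\tilde{\mathcal{A}}$, the spectrum lies in $\{\mathrm{Re}\,\lambda\geqslant 0\}$; I would argue that the boundary damping $T^{\ast}(T\cdot)$ forces the nonzero part to satisfy $\mathrm{Re}\,\sigma_s\geqslant\beta$ for some $\beta>0$. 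The two consequences that drive everything are: $e^{-t\tilde{\mathcal{A}}}$ restricted to $E_1$ is the identity (because $\tilde{\mathcal{A}}\equiv 0$ there), while on $E_2$ one has the decay estimate $\|e^{-t\tilde{\mathcal{A}}}(\mathcal{I}-\mathcal{P})\|_{\mathcal{L}(\mathcal{D})}\leqslant C e^{-\beta t}$ for $t\geqslant 0$.

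Fixing $0<\eta<\beta$, I would work in the weighted space $BC_\eta = \{U\in C(\mathbb{R};\mathcal{D}): \|U\|_\eta := \sup_{t}e^{-\eta|t|}\|U(t)\|_{\mathcal{D}}<\infty\}$, and for each $\xi_0\in\mathbb{R}^2$ define the Lyapunov--Perron operator
\[
(\mathcal{J}U)(t) = \begin{pmatrix}\xi_0\cdot\psi\\0\end{pmatrix} - \int_0^t \mathcal{P}\mathcal{F}(U(s))\,\mathrm{d}s - \int_{-\infty}^t e^{-(t-s)\tilde{\mathcal{A}}}(\mathcal{I}-\mathcal{P})\mathcal{F}(U(s))\,\mathrm{d}s,
\]
whose fixed points are exactly the $\eta$-bounded solutions of \eqref{evolu_equ:nonconvergence} whose center component starts at $\xi_0\cdot\psi$. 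Using the decay estimate together with the hypothesis $\|\mathcal{F}\|_{C^1_{\mathrm{b}}(\mathcal{D}\to\mathcal{D})}<c_m$, the weighted-norm bounds $e^{-\eta|t|}\int_0^{|t|}e^{\eta s}\,\mathrm{d}s\leqslant \eta^{-1}$ and $\int_{-\infty}^t e^{-\beta(t-s)}e^{\eta|s|}\,\mathrm{d}s\lesssim (\beta-\eta)^{-1}e^{\eta|t|}$ show that $\mathcal{J}$ maps $BC_\eta$ into itself and is a contraction once $c_m$ is small relative to $\beta-\eta$. The Banach fixed-point theorem yields a unique $U(\cdot;\xi_0)\in BC_\eta$; setting $\Lambda(\xi_0) = (\mathcal{I}-\mathcal{P})U(0;\xi_0)$ gives a map into $\mathcal{D}\cap E_2$, and $\mathcal{M}_{f_0}$ is its graph, which is property~(1).

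For the regularity and the estimate \eqref{cmr_estimate} I would use smooth dependence of the fixed point on the parameter $\xi_0$. Differentiating the fixed-point relation in $\xi_0$ produces, at each order $j\leqslant m-1$, a further fixed-point equation for $\partial_\xi^{j}U$ whose linear part is again governed by $\mathcal{J}$ and whose forcing is controlled by $\|\mathcal{F}\|_{C^1_{\mathrm{b}}}$ and lower-order derivatives; the fiber-contraction (uniform-contraction) theorem then gives $\Lambda\in C^{m-1}_{\mathrm{b}}$ inductively, and the spectral gap condition is precisely what fixes the admissible size $c_m=c_m(\beta,\eta,m)$, which shrinks as $m$ grows. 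The bound \eqref{cmr_estimate} falls out of these same fixed-point estimates. For the reduced dynamics, since $U(\cdot;\xi_0)$ solves \eqref{evolu_equ:nonconvergence} and stays on $\mathcal{M}_{f_0}$, its center component $\xi(t)$ satisfies $\dot\xi\cdot\psi = -\mathcal{P}\mathcal{F}(U)$, which is exactly \eqref{cmr_ode}--\eqref{cmr_ode1}, giving~(2); differentiating the graph identity $U = (\xi\cdot\psi,0)^\top + \Lambda(\xi)$ along the flow and projecting with $\mathcal{P}$ and $\mathcal{I}-\mathcal{P}$ produces the two invariance relations \eqref{cmr_ode2}, which is~(3).

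The main obstacle I anticipate is the first step: rigorously establishing the uniform gap $\mathrm{Re}\,\sigma_s\geqslant\beta>0$ and, more delicately, the exponential decay of $e^{-t\tilde{\mathcal{A}}}(\mathcal{I}-\mathcal{P})$ in the graph norm of $\mathcal{D}$ rather than merely in $\mathcal{H}$. Because $\tilde{\mathcal{A}}$ is non-normal on $E_2$, one cannot read this off from a spectral bound alone; it requires resolvent estimates for the damped wave generator, the exclusion of nonzero purely imaginary eigenvalues (an observability/unique-continuation input from the boundary damping), and transfer of the $\mathcal{H}$-level decay to the $\mathcal{D}$-level via the compact embedding $H^2(\Omega)\times H^1(\Omega)\hookrightarrow\mathcal{H}$ and interpolation. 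Once this decay is in hand, everything downstream is a routine, if lengthy, application of the contraction-mapping machinery.
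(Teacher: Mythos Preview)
Your proposal is correct and, at the mathematical level, follows the same route as the paper: verify that $0$ is isolated in the spectrum of $\tilde{\mathcal{A}}$ with the remainder bounded away from the imaginary axis, deduce exponential decay of the semigroup on $E_2$, and then invoke center-manifold theory. The paper's proof is much shorter because it simply cites the abstract theorem of Vanderbauwhede--Iooss (1992) for properties (1), (2), and (4), and derives (3) by the same direct computation you give (differentiate $U(\xi(t))$ along the flow and project with $\mathcal{P}$ and $\mathcal{I}-\mathcal{P}$). Your Lyapunov--Perron construction is precisely the machinery that underlies the cited reference, so you are unpacking what the paper outsources. One comment on your anticipated obstacle: transferring exponential decay from $\mathcal{H}$ to $\mathcal{D}$ is easier than you suggest, since $\mathcal{D}$ is the domain of $\tilde{\mathcal{A}}$ equipped with the graph norm and the semigroup commutes with its generator, so $\|e^{-t\tilde{\mathcal{A}}}U\|_{\mathcal{D}} \lesssim \|e^{-t\tilde{\mathcal{A}}}U\|_{\mathcal{H}} + \|e^{-t\tilde{\mathcal{A}}}\tilde{\mathcal{A}}U\|_{\mathcal{H}}$ gives the transfer for free; no interpolation or compactness argument is needed.
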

\begin{proof}
Since $0$ is an isolated point in the spectrum of $\tilde{\mathcal{A}}$ and $\mathrm{Ker}\tilde{\mathcal{A}}$ is finite dimensional, then there exists a positive constant $c > 0$ such that the spectrum of $\tilde{\mathcal{A}}|_{E_2}$, the restriction of $\tilde{\mathcal{A}}$ on $E_2$, belongs to $(-\infty, -c]$, which implies $\tilde{\mathcal{A}}|_{E_2}$ is the infinitesimal generator of a strongly continuous semigroup which is exponential decay. By the result in~\cite[Section 3]{Vanderbauwhede1992}, we obtain the properties (1) and (2) of the lemma.

Notice that $\dot{U}(t) = \nabla_{\xi} U \cdot h(\xi)$. Then we get the first equality of~\eqref{cmr_ode2}. Since we also have
\begin{equation*}
\begin{aligned}
0=\dot{U}(t) +\tilde{\mathcal{A}}U(t) + \mathcal{F}(U(t)) &= 
\left(
\begin{array}{c}
h(\xi)\cdot\psi \\
0
\end{array}
\right)
+ \dot{\Lambda}(\xi) + \tilde{\mathcal{A}}\Lambda(\xi) + \mathcal{F}(U)\\
&=-\mathcal{P}\mathcal{F}(U) + \dot{\Lambda}(\xi) + \tilde{\mathcal{A}}\Lambda(\xi) + \mathcal{F}(U) \\
&=\dot{\Lambda}(\xi)+ \tilde{\mathcal{A}}\Lambda(\xi) + (\mathcal{I}-\mathcal{P})\mathcal{F}(U),
\end{aligned}
\end{equation*}
and $\dot{\Lambda}(\xi) = \nabla_{\xi} \Lambda \cdot h(\xi)$, the second equality of~\eqref{cmr_ode2} is obtained.
The smoothness of the center manifold (cf. \cite[Theorem 2]{Vanderbauwhede1992}) guarantees property (4).
\end{proof}

Let us introduce the energy functional of \eqref{evolu_equ:nonconvergence} as
\[
\mathcal{G}(u, v)=\frac{1}{2}\|(u, v)^{\top}\|^2_{\mathcal{H}} + \int_{\Omega}F_0(x, u)\mathrm{d}x.
\]
If $f_0$ satisfies the assumption of Lemma~\ref{cm_reduction}, then the functional $\Xi(\xi)$ defined by
\begin{equation*} \label{Lyapunov_function}
\xi\mapsto\Xi(\xi) :=\mathcal{G}(\xi\cdot\psi + \Lambda_1(\xi), \Lambda_2(\xi))\\
\end{equation*}
is a Lyapunov functional of \eqref{cmr_ode} due to the decreasing property of $\mathcal{G}$. 

In the following, we are about to show that $\Xi(\xi)$ satisfies some geometric conditions, which implies~\eqref{cmr_ode} has a trajectory with its $\omega$-limit set equal to a unit circle $S^1$. To do so, we use the following polar coordinates on $\mathbb{R}^2$
\[
	\xi_1 = \rho\cos\theta, \quad \xi_2 = \rho\sin\theta,
\]
and introduce the notations for a disk and a circle as follows
\begin{equation*}
\begin{split}
	S &= \{(\rho\cos\theta, \rho\sin\theta): \rho M(\theta) =1, \theta\in[0, 2\pi)\},\\
	B_{\delta} &= \{(\rho\cos\theta, \rho\sin\theta): \rho M(\theta) < 1+\delta, \theta\in[0, 2\pi)\}.
\end{split}
\end{equation*}
\begin{proposition}\label{ode_nonconvergence}
Assume $\Xi(\xi)$ satisfies the following conditions.
\begin{itemize}
\item[{\rm{(1)}}] $\Xi$ vanishes on $\bar{B}_0$.
\item[{\rm{(2)}}] There are $C^1$ functions $l_1(\theta)$, $l_2(\theta)$ on $(0, \infty)$ such that 
\begin{equation*}
\begin{split}
	l_2(\theta) > l_1(\theta) > l_2(\theta + 2\pi) > \frac{1}{M(\theta)}, \quad &\theta\in(0, \infty),\\
	l_2(\theta) - \frac{1}{M(\theta)}\rightarrow 0, \quad &\theta\rightarrow\infty,
\end{split}
\end{equation*}
and for some $\delta > 0$, the level set $\{\xi: \Xi(\xi) = 0\}$ in $\mathrm{B}_{\delta}$ is identical to the union
\[
	\bar{B}_0\cap\left(\mathrm{O}_1\cap B_{\delta} \right)\cap\left(\mathrm{O}_2\cap B_{\delta} \right)
\]
with $\mathrm{O}_i := \{(\rho\cos\theta, \rho\sin\theta): \rho  = l_i(\theta), \theta\in (0, \infty)\}$, $i=1,2.$
\item[{\rm{(3)}}] $\nabla_{\xi}\Xi \neq 0$ on $B_{\delta}\backslash B_{0}$, $\delta$ given in the condition {\rm{(2)}}.
\end{itemize}
If the set of equilibria of~\eqref{cmr_ode} coincides with $\bar{B}_0$. Then there exists a point $\xi_0\in B_{\delta}$ such that equation~\eqref{cmr_ode} with $\xi(0)=\xi_0$ has a bounded solution $\xi(t)$ on $[0, \infty)$, and its $\omega$-limit set equals $S$.
\end{proposition}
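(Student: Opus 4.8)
The plan is to treat~\eqref{cmr_ode} as a planar gradient-like system and to exploit $\Xi$ as a Lyapunov functional together with the two spiral curves $O_1,O_2$, which I will use as barriers forcing infinite rotation around $S$. Throughout I work in the polar coordinates $(\rho,\theta)$ and record at the outset that, since $\bar B_0$ is exactly the equilibrium set and $S=\partial\bar B_0=\{\rho M(\theta)=1\}$, \emph{every point of $S$ is itself an equilibrium} of~\eqref{cmr_ode}. Thus the target statement says that a trajectory can approach the continuum $S$ of equilibria as a set while failing to settle at any single one of them.

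\emph{Choice of $\xi_0$ and trapping.} First I would pick $\xi_0\in B_\delta\setminus\bar B_0$ lying on one of the spirals, say on $O_2$ at a fixed angle, and show that the trajectory $\xi(t)$ of~\eqref{cmr_ode} stays in $B_\delta\setminus\bar B_0$ for all $t\ge 0$. It cannot meet $\bar B_0$ in finite time, since the points of $\bar B_0$ are equilibria and uniqueness of solutions of~\eqref{cmr_ode} forbids a nonconstant trajectory from reaching an equilibrium in finite time. To prevent escape through $\partial B_\delta$ I would use the barrier property of the level set $\{\Xi=0\}$: by condition~(2) this set is $\bar B_0$ together with the spiral arcs $O_1,O_2$, and by condition~(3) $\nabla_\xi\Xi\neq 0$ on $B_\delta\setminus B_0$, so the spirals are regular zero-level curves across which $\Xi$ changes sign. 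Because $\Xi$ is nonincreasing along the flow, the spiral arcs bounding the sublevel region containing $\xi_0$ are crossed by $\xi(t)$ only in the direction of decreasing $\Xi$, confining $\xi(t)$ to a bounded region squeezed between consecutive spiral turns and $\bar B_0$; boundedness on $[0,\infty)$ follows at once.

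\emph{Localisation of the limit set.} Since $\xi(t)$ stays in the bounded set $\bar B_\delta$, the quantity $\Xi(\xi(t))$ is nonincreasing and bounded below, hence $\Xi(\xi(t))\to\ell$. Because the dissipation $\nabla_\xi\Xi\cdot h$ is strictly negative off the equilibrium set (which by hypothesis is $\bar B_0$), LaSalle's invariance principle gives $\omega(\xi_0)\subset\bar B_0$ and $\ell=0$. On the other hand $\xi(t)$ never enters $\bar B_0$, so every limit point is a limit of exterior points and therefore lies on $\partial\bar B_0=S$. Combining the two inclusions yields $\omega(\xi_0)\subset S$.

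\emph{Infinite winding — the main obstacle.} It remains to upgrade $\omega(\xi_0)\subset S$ to equality, and this is the heart of the argument, using the precise ordering $l_2(\theta)>l_1(\theta)>l_2(\theta+2\pi)>1/M(\theta)$ together with $l_2(\theta)-1/M(\theta)\to 0$. The aim is to show that the angular variable satisfies $\theta(t)\to+\infty$, i.e. the trajectory rotates around $S$ infinitely often. The interlacing of the spirals means that consecutive turns of $O_1,O_2$ accumulate monotonically on $S$ as $\theta\to\infty$, partitioning $B_\delta\setminus\bar B_0$ into a sequence of thin curvilinear cells stacked around $S$. A trajectory confined between these barriers and driven by the Lyapunov descent toward the zero set $S$ must thread successively through these cells, each passage advancing $\theta$ by roughly $2\pi$; since the transversality $\nabla_\xi\Xi\neq 0$ near any single point of $S$ prevents $\xi(t)$ from stalling there, one concludes $\theta(t)\to\infty$ while $\rho(t)-1/M(\theta(t))\to 0$. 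Hence $\xi(t)$ accumulates at every angle on $S$, giving $S\subset\omega(\xi_0)$ and therefore $\omega(\xi_0)=S$. I expect this winding step to be the genuine difficulty: one must verify rigorously, from conditions~(2)–(3), both the correct crossing direction of the flow across each spiral arc and a uniform lower bound on the angular increment per cell, thereby excluding convergence to a single equilibrium of $S$.
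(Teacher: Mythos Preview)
The paper does not prove this proposition at all; it simply invokes Proposition~2.1 of Pol\'a\v{c}ik--Rybakowski and the variant in Section~2.1 of Jendoubi--Pol\'a\v{c}ik, noting that the statement concerns planar ODEs and is independent of the particular evolution equation. Your sketch follows precisely the strategy of those references: trap the trajectory in $B_\delta\setminus\bar B_0$, use Lyapunov descent and LaSalle to obtain $\omega(\xi_0)\subset S$, and then use the spiral barriers to force infinite winding. So your overall plan is the right one.

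One step in your sketch is, however, wrong and would not go through as written. You claim that ``the transversality $\nabla_\xi\Xi\neq 0$ near any single point of $S$ prevents $\xi(t)$ from stalling there.'' But $\Xi$ vanishes identically on $\bar B_0$, and in the explicit construction used later (the function $\mathcal{W}$ involves $\exp\{1/(1-\rho M(\theta))\}$) it is $C^\infty$-flat at $S$ from the exterior as well; hence $\nabla_\xi\Xi=0$ on all of $S$, and condition~(3) must be read on the open annulus $B_\delta\setminus\bar B_0$ only. The genuine mechanism that rules out convergence to a single equilibrium of $S$ is topological, not a transversality argument: one chooses $\xi_0$ in a connected component of, say, $\{\Xi<0\}\cap(B_\delta\setminus\bar B_0)$. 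Since $\Xi(\xi(t))$ is nonincreasing and starts strictly negative, the trajectory stays in that same component for all $t\ge 0$. By condition~(2) this component is a single spiral strip bounded by consecutive arcs of $O_1$ and $O_2$, and this strip winds onto $S$ as $\theta\to\infty$. Combined with $\rho(t)M(\theta(t))\to 1$ (which you correctly obtain from $\omega(\xi_0)\subset\bar B_0$), confinement to the strip forces $\theta(t)\to\infty$, whence $\omega(\xi_0)=S$. No gradient information at $S$ is used, nor any ``uniform lower bound on the angular increment per cell.'' A second, minor point: the strict-LaSalle assertion you invoke (that $\nabla_\xi\Xi\cdot h<0$ off equilibria) is not among the stated hypotheses of the proposition; it has to be inherited from the strict boundary dissipation of the underlying wave equation through the centre-manifold reduction, and you should say so explicitly.
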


In fact, this proposition describes a class of planar ordinary differential equations with non-convergent trajectories, which is independent of the form of the evolution equation. For the detailed proof of this proposition, we refer to \cite[Proposition 2.1]{polavcik2002nonconvergent}. Here, we use the modified version proposed in \cite[Secion 2.1]{jendoubi2003non}.

The following two lemmas provides an analysis of the Lyapunov functional $\Xi(\xi)$.
\begin{lemma}\label{decomposition_1}
Any equilibrium of~\eqref{cmr_ode} is a critical point of the functional $\Xi$. Moreover, if $f_0$ is of class $ C^3_{\mathrm{b}}(\bar{\Omega}\times\mathbb{R})$ and $\|f_a\|_{C^2_{\mathrm{b}}(\bar{\Omega}\times\mathbb{R})}< c^{\ast} < c_3$ with $c^{\ast}$ sufficiently small, then one has
\[
	\Xi(\xi) = \Upsilon_1(\xi) +  \int_{\Omega}F_a(x, \xi\cdot\psi)\mathrm{d}x, \quad \xi\in\mathbb{R}^2,
\]
where $\Upsilon_1(\xi)$ is a $C^2$ function on $\mathbb{R}^2$ satisfying
\begin{equation} \label{estimates_lyapunov_3}
|\Upsilon_1(\xi)| + |\partial_{\xi_i}\Upsilon_1(\xi)|\leqslant C\max\{1, \|f_a\|^2_{C^3_{\mathrm{b}}(\bar{\Omega}\times\mathbb{R})}\}\left( \sup_{x\in\Omega}|f_a(x, \xi\cdot\psi)|^2 + \sup_{x\in\Omega}|f^{\prime}_a(x, \xi\cdot\psi)| \right)
\end{equation}
with $i=1, 2$ and $C$ a positive constant.
\end{lemma}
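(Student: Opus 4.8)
The plan is to read $\Upsilon_1$ off from the decomposition itself, setting
\[
\Upsilon_1(\xi) := \mathcal{G}\big(\xi\cdot\psi + \Lambda_1(\xi),\,\Lambda_2(\xi)\big) - \mathcal{G}(\xi\cdot\psi, 0),
\]
and then verifying the two asserted claims. First I would check the purely algebraic identity $\mathcal{G}(\xi\cdot\psi, 0) = \int_\Omega F_a(x,\xi\cdot\psi)\,\mathrm{d}x$. Since $\psi_1,\psi_2\in\mathrm{Ker}\,\mathcal{B}$ we have $-\triangle_R(\xi\cdot\psi) = a(x)(\xi\cdot\psi)$, so the $H^1$-part of the energy collapses to $\tfrac12\int_\Omega a(x)|\xi\cdot\psi|^2\,\mathrm{d}x$; combined with the relation $F_a(x,s) = \tfrac12 a(x)s^2 + F_0(x,s)$ this gives the identity, hence the stated decomposition with the above $\Upsilon_1$.

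For the critical-point claim I would exploit the gradient-like structure. Along any solution carried by the centre manifold, property (2) of Lemma~\ref{cm_reduction} gives $U(t)=U(\xi(t))$, and a direct computation using the self-adjointness of $\triangle_R$ shows $\tfrac{d}{dt}\mathcal{G}(U) = -\int_\Gamma |u_t|^2\,\mathrm{d}S\le 0$, so $\Xi$ is indeed a Lyapunov function. If $\xi^\ast$ is an equilibrium of \eqref{cmr_ode}, then $\xi(t)\equiv\xi^\ast$ solves the ODE, so $U(\xi^\ast)$ is a steady state of \eqref{evolu_equ:nonconvergence}; reading off the two components of $\tilde{\mathcal{A}}U(\xi^\ast)+\mathcal{F}(U(\xi^\ast))=0$ forces $\Lambda_2(\xi^\ast)=0$ and $-\triangle_R u + f_0(x,u)=0$, i.e. $\mathcal{G}'(U(\xi^\ast))=0$. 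The chain rule $\partial_{\xi_i}\Xi = \langle \mathcal{G}'(U(\xi)),\,\partial_{\xi_i}U(\xi)\rangle$ then yields $\nabla_\xi\Xi(\xi^\ast)=0$.

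It remains to establish the regularity and the estimate \eqref{estimates_lyapunov_3}. Since $m=3$, Lemma~\ref{cm_reduction} gives $\Lambda\in C^2_{\mathrm{b}}(\mathbb{R}^2\to\mathcal{D})$, and $\mathcal{G}$ is $C^2$ because $f_0\in C^3_{\mathrm{b}}$; hence $\Upsilon_1$ is $C^2$. For the bound I would Taylor-expand $\mathcal{G}(\xi\cdot\psi+\Lambda_1,\Lambda_2)$ about $(\xi\cdot\psi,0)$: the linear term combines the quadratic-form cross term $(-\triangle_R(\xi\cdot\psi),\Lambda_1)=\int_\Omega a(x)(\xi\cdot\psi)\Lambda_1\,\mathrm{d}x$ with $\int_\Omega f_0(x,\xi\cdot\psi)\Lambda_1\,\mathrm{d}x$ into $\int_\Omega f_a(x,\xi\cdot\psi)\Lambda_1\,\mathrm{d}x$, while everything else is quadratic in $\Lambda$, namely $\tfrac12\|\Lambda_1\|^2_{H^1}+\tfrac12\|\Lambda_2\|^2$ plus the Taylor remainder $\tfrac12\int_\Omega f_0'(x,\zeta)\Lambda_1^2\,\mathrm{d}x$. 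Estimating $|\Upsilon_1|$ by Cauchy–Schwarz then reduces the problem to controlling $\Lambda(\xi)$ (and, for $\partial_{\xi_i}\Upsilon_1$, $\nabla_\xi\Lambda(\xi)$) pointwise in $\xi$.

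The hard part will be precisely this pointwise control, since \eqref{cmr_estimate} only bounds $\sup_\xi\|\Lambda\|$ by $\|\mathcal{F}\|_{C^1_{\mathrm{b}}}$ and does not exhibit the $\xi$-dependent factor $\sup_x|f_a(x,\xi\cdot\psi)|$ that must appear on the right of \eqref{estimates_lyapunov_3}. To extract it I would return to the invariance identity \eqref{cmr_ode2}, written as $\tilde{\mathcal{A}}\Lambda(\xi) = -\nabla_\xi\Lambda\cdot h(\xi) - (\mathcal{I}-\mathcal{P})\mathcal{F}(U(\xi))$; inverting the boundedly invertible $\tilde{\mathcal{A}}|_{E_2}$ and using $\mathcal{F}(U(\xi))=(0,\,f_a(\cdot,\xi\cdot\psi+\Lambda_1))^\top$ together with $\|h(\xi)\|=\|\mathcal{P}\mathcal{F}(U(\xi))\|$ and the smallness of $\nabla_\xi\Lambda$ from \eqref{cmr_estimate}, one obtains $\|\Lambda(\xi)\|_{\mathcal{D}}\lesssim \sup_x|f_a(x,\xi\cdot\psi)|$ (after absorbing the $\Lambda_1$-shift through $\|f_a'\|_\infty$), and a differentiated version of the same identity gives $\|\nabla_\xi\Lambda(\xi)\|_{\mathcal{D}}\lesssim \sup_x|f_a'(x,\xi\cdot\psi)|$. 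Substituting these into the expansion, applying Young's inequality to the cross terms, and absorbing all global $C^3_{\mathrm{b}}$-norms of $f_a$ into the prefactor $\max\{1,\|f_a\|^2_{C^3_{\mathrm{b}}}\}$ produces exactly the two contributions $\sup_x|f_a(x,\xi\cdot\psi)|^2$ and $\sup_x|f_a'(x,\xi\cdot\psi)|$ in \eqref{estimates_lyapunov_3}.
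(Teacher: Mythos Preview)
Your proposal is correct and essentially mirrors the paper's proof: the same decomposition of $\Xi$ via $\xi\cdot\psi\in\mathrm{Ker}\,\mathcal{B}$, the same chain-rule argument for the critical-point claim, and the same extraction of pointwise bounds on $\Lambda$ and $\nabla_\xi\Lambda$ from the invariance identity \eqref{cmr_ode2} together with the smallness $\|f_a\|_{C^2_{\mathrm{b}}}<c^\ast$ to absorb the $\Lambda_1$-shift (the only cosmetic difference being that the paper inverts the restricted scalar operator $\tilde{\mathcal{B}}^{-1}$ component by component rather than $\tilde{\mathcal{A}}|_{E_2}$ as a block). One small correction: the differentiated invariance equation produces, in addition to $\sup_x|f_a'(x,\xi\cdot\psi)|$, a contribution of the form $\|f_a\|_{C^3_{\mathrm{b}}}\sup_x|f_a(x,\xi\cdot\psi)|$ in the bound for $\nabla_\xi\Lambda$ (coming from the second derivatives of $\Lambda$ multiplied by $h$), so your estimate $\|\nabla_\xi\Lambda\|_{\mathcal{D}}\lesssim\sup_x|f_a'|$ is slightly too clean---but, as you already anticipate, this extra term is harmlessly absorbed into the prefactor $\max\{1,\|f_a\|^2_{C^3_{\mathrm{b}}}\}$.
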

\begin{proof}
To facilitate reading, we divide the proof into the following three steps.

\textbf{Step 1}. Combining integration by parts and the fact that $\xi\cdot\psi\in\mathrm{Ker}B$ gives
\begin{equation} \label{lyapunov_1}
\begin{split}
\Xi(\xi) &= \frac{1}{2}\left\|\left(\xi\cdot\psi + \Lambda_1(\xi), \Lambda_2(\xi)\right)^{\top}\right\|^2_{\mathcal{H}} + \int_{\Omega}F_0(x, \xi\cdot\psi + \Lambda_1(\xi))\mathrm{d}x\\
&= \frac{1}{2}\left(\int_{\Omega}|\nabla (\xi\cdot\psi + \Lambda_1(\xi))|^2\mathrm{d}x + \int_{\Gamma}|\xi\cdot\psi +\Lambda_1(\xi)|^2\mathrm{d}S + \int_{\Omega}|\Lambda_2(\xi)|^2\mathrm{d}x\right) \\
&\quad + \int_{\Omega}F_0(x, \xi\cdot\psi + \Lambda_1(\xi))\mathrm{d}x\\
&= \frac{1}{2}\left(\int_{\Omega}|\nabla  \Lambda_1(\xi)|^2\mathrm{d}x - a(x)\int_{\Omega}|  \Lambda_1(\xi)|^2\mathrm{d}x + \int_{\Gamma}|\Lambda_1(\xi)|^2\mathrm{d}S + \int_{\Omega}|\Lambda_2(\xi)|^2\mathrm{d}x\right) \\
&\quad+\int_{\Omega}F_a(x, \xi\cdot\psi + \Lambda_1(\xi))\mathrm{d}x
\end{split}
\end{equation}
with $F_a(x, u)=\int_{0}^{u}f_a(x, s)\mathrm{d}s$.
Differentiating~\eqref{lyapunov_1} with respect to $\xi_i$, $i=1, 2$, we obtain
\begin{equation}\label{lyapunov_2}
\begin{split}
\partial_{\xi_i}\Xi(\xi) &=  -\int_{\Omega}\mathcal{B}\Lambda_1(\xi)\partial_{\xi_i}\Lambda_1(\xi)\mathrm{d}x + \int_{\Omega}\Lambda_2(\xi)\partial_{\xi_i}\Lambda_2(\xi)\mathrm{d}x-\int_{\Gamma} \Lambda_2(\xi)\partial_{\xi_i}\Lambda_1(\xi)\mathrm{d}S\\
&\quad + \int_{\Omega}f_a(x, \xi\cdot\psi + \Lambda_1(\xi))(\psi_i + \partial_{\xi_i}\Lambda_1(\xi))\mathrm{d}x\\
&=  -\int_{\Omega}[\mathcal{B}\Lambda_1(\xi)-f_a(x, \xi\cdot\psi + \Lambda_1(\xi))](\psi_i + \partial_{\xi_i}\Lambda_1(\xi))\mathrm{d}x - \int_{\Gamma}\psi_i \Lambda_2(\xi)\mathrm{d}S\\
&\quad + \int_{\Omega}\Lambda_2(\xi)\partial_{\xi_i}\Lambda_2(\xi)\mathrm{d}x-\int_{\Gamma} \Lambda_2(\xi)\partial_{\xi_i}\Lambda_1(\xi)\mathrm{d}S.
\end{split}
\end{equation}
The first equality of~\eqref{cmr_ode2} can be written in component wise
\begin{equation}\label{cmr_ode3}
\begin{split}
	  h(\xi)\cdot \psi + \nabla_{\xi}\Lambda_1 \cdot h(\xi) &= \Lambda_2(\xi),\\
	\nabla_{\xi}\Lambda_2 \cdot h(\xi) &= \mathcal{B}\Lambda_1 - T^\ast(T\Lambda_2) - f_a(x, h(\xi)\cdot \psi +\Lambda_1).
\end{split}
\end{equation}
For every $\xi_0$, $h(\xi_0) = 0$ implies from~\eqref{cmr_ode3} that 
\[
\Lambda_2(\xi_0) = 0, \quad \mathcal{B}\Lambda_1(\xi_0) - f_a(x, \Lambda_1(\xi_0)) = 0.
\]
Consequently, it deduce from~\eqref{lyapunov_2} that $\nabla_{\xi}\Xi(\xi_0) = 0$. This completes the first part of the lemma.

\textbf{Step 2}. Due to the following equality
\begin{equation*}
\begin{split}
 \int_{\Omega}F_a(x, \xi\cdot\psi + \Lambda_1(\xi))\mathrm{d}x -\int_{\Omega}F_a(x, \xi\cdot\psi)\mathrm{d}x &=  \int_{\Omega}\int_{\xi\cdot\psi}^{\xi\cdot\psi + \Lambda_1(\xi)}f_a(x, y)\mathrm{d}y\mathrm{d}x\\
 &=\int_{\Omega}\Lambda_1(\xi))\int^{1}_{0}f_a(x, \xi\cdot\psi + s\Lambda_1(\xi))\mathrm{d}s\mathrm{d}x,	
\end{split}
\end{equation*}
then from~\eqref{lyapunov_1} the Lyapunov functional $\Xi(\xi)$ can be reformulated as
\begin{equation*}
\Xi(\xi) =  \Upsilon_1(\xi) +  \int_{\Omega}F_a(x, \xi\cdot\psi)\mathrm{d}x
\end{equation*}
with
\begin{equation}\label{lyapunov_3}
\begin{split}
\Upsilon_1(\xi) &:= \frac{1}{2}\left(\int_{\Omega}|\nabla  \Lambda_1(\xi)|^2\mathrm{d}x - a(x)\int_{\Omega}|  \Lambda_1(\xi)|^2\mathrm{d}x + \int_{\Gamma}|\Lambda_1(\xi)|^2\mathrm{d}S + \int_{\Omega}|\Lambda_2(\xi)|^2\mathrm{d}x\right) \\
&\qquad + \int_{\Omega}\Lambda_1(\xi))\int^{1}_{0}f_a(x, \xi\cdot\psi + s\Lambda_1(\xi))\mathrm{d}s\mathrm{d}x. 
\end{split}
\end{equation}
From differentiating~\eqref{lyapunov_3} with respect to $\xi_i$ it deduces that
\begin{equation}\label{diff_lyapunov_3}
\begin{split}
\partial_{\xi_i}\Upsilon_1(\xi) &=  -\int_{\Omega}\mathcal{B}\Lambda_1(\xi)\partial_{\xi_i}\Lambda_1(\xi)\mathrm{d}x + \int_{\Omega}\Lambda_2(\xi)\partial_{\xi_i}\Lambda_2(\xi)\mathrm{d}x-\int_{\Gamma} \Lambda_2(\xi)\partial_{\xi_i}\Lambda_1(\xi)\mathrm{d}S\\
&\quad + \int_{\Omega}\Lambda_1(\xi))\int^{1}_{0} f^{\prime}_a(x, \xi\cdot\psi + s\Lambda_1(\xi))(\psi_i + s\partial_{\xi_i}\Lambda_1(\xi))\mathrm{d}s\mathrm{d}x\\
&\quad + \int_{\Omega}\partial_{\xi_i}\Lambda_1(\xi))\int^{1}_{0}f_a(x, \xi\cdot\psi + s\Lambda_1(\xi))\mathrm{d}s\mathrm{d}x.
\end{split}
\end{equation}

To estimate $\Upsilon_1(\xi)$ and $\partial_{\xi_i}\Upsilon_1(\xi)$, we need the following estimates
\begin{equation}\label{estimates_lyp_3_1}
\begin{split}
	\| \Lambda_1(\xi)\|_{H^2} + \| \Lambda_2(\xi)\|&\leqslant C_1\|\tilde{f}_a(\xi\cdot\psi)\|,\\
	\|\tilde{f}_a(x, \xi\cdot\psi + s\Lambda_1(\xi))\|&\leqslant C_2 \|\tilde{f}_a(\xi\cdot\psi)\|,\\
	\|\partial_{\xi_i} \Lambda_1(\xi)\|_{H^2} + \| \partial_{\xi_i}\Lambda_2(\xi)\|&\leqslant C_3\left(\|f_a\|_{C^3_{\mathrm{b}}(\bar{\Omega}\times\mathbb{R})}\|\tilde{f}_a(\xi\cdot\psi)\| + \|\tilde{f}^{\prime}_a(\xi\cdot\psi)\psi_i \| \right), \\
	\|\partial_{\xi_i}\tilde{f}_a(x, \xi\cdot\psi + s\Lambda_1(\xi))\|&\leqslant C_4\left(\|f_a\|_{C^3_{\mathrm{b}}(\bar{\Omega}\times\mathbb{R})}\|\tilde{f}_a(\xi\cdot\psi)\| + \|\tilde{f}^{\prime}_a(\xi\cdot\psi)\psi_i \| \right).
\end{split}
\end{equation}
By~\eqref{lyapunov_3} and the first two estimates of~\eqref{estimates_lyp_3_1} we get
\[
|\Upsilon_1(\xi)| \leqslant C_5\|\tilde{f}_a(\xi\cdot\psi)\|^2.
\]
Similarly, by~\eqref{diff_lyapunov_3} and the last two estimates of~\eqref{estimates_lyp_3_1} we have
\[
|\partial_{\xi_i}\Upsilon_1(\xi)| \leqslant C_6\left(\|f_a\|^2_{C^3_{\mathrm{b}}(\bar{\Omega}\times\mathbb{R})}\|\tilde{f}_a(\xi\cdot\psi)\|^2 + \|\tilde{f}^{\prime}_a(\xi\cdot\psi)\psi_i \|^2 \right).
\]
We also note that
\begin{equation*}
	\|\tilde{f}_a(\xi\cdot\psi)\|\leqslant C_7\sup_{x\in\Omega}|f_a(x, \xi\cdot\psi)|,  \quad \|\tilde{f}^{\prime}_a(\xi\cdot\psi)\|\leqslant C_8 \sup_{x\in\Omega}|f^{\prime}_a(x, \xi\cdot\psi)|.
\end{equation*}
Thus, the expected inequality~\eqref{estimates_lyapunov_3} can be obtained.

\textbf{Step 3}. The remaining part of this proof is to show the estimates~\eqref{estimates_lyp_3_1} hold true. 

Let $\tilde{\mathcal{B}}$ be the restriction of the operator $\mathcal{B}$ to $(\mathrm{Ker}\mathcal{B})^{\bot}$. Then its inverse operator $\tilde{\mathcal{B}}^{-1}: (\mathrm{Ker}\mathcal{B})^{\bot}\rightarrow H^2(\Omega)$ is bounded. From~\eqref{cmr_ode3} we have
\begin{equation} \label{eq:lambdas}
\begin{split}
	\Lambda_1(\xi) &= \tilde{\mathcal{B}}^{-1}\left(\nabla_{\xi}\Lambda_2 \cdot h(\xi) + T^\ast(T\Lambda_2) + f_a(x, h(\xi)\cdot \psi + \Lambda_1)\right),\\
	\Lambda_2(\xi) &= h(\xi)\cdot \psi + \nabla_{\xi}\Lambda_1 \cdot h(\xi).
\end{split}
\end{equation}

By~\eqref{cmr_ode1} and ~\eqref{cmr_estimate}, the second equality of~\eqref{eq:lambdas} yields
\begin{equation} \label{eq:lambdas_2}
\begin{split}
\|\Lambda_2(\xi)\|&\leqslant \|h(\xi)\cdot \psi \| + |\nabla_{\xi}\Lambda_1|\|h(\xi)\|\\
&\leqslant C_8\|\tilde{f}_a(\xi\cdot\psi + \Lambda_1(\xi))\|.
\end{split}
\end{equation}
A similar estimate in the first equality of~\eqref{eq:lambdas} gives
\begin{equation} \label{eq:lambdas_1}
\begin{split}
\|\Lambda_1(\xi)\|_{H^2} &\leqslant C_9\left(\|\tilde{f}_a(\xi\cdot\psi + \Lambda_1(\xi))\| + \|\Lambda_2(\xi)\|\right)\\
	&\leqslant C_{10}\|\tilde{f}_a(\xi\cdot\psi + \Lambda_1(\xi))\|.
\end{split}
\end{equation}
If $\|f_a\|_{C^2_{\mathrm{b}}(\bar{\Omega}\times\mathbb{R})}< c^{\ast}$ with $c^\ast$ small enough, then by the mean-value theorem we have
\begin{equation} \label{estimate:f_a}
C_{10}\|\tilde{f}_a(\xi\cdot\psi + \Lambda_1(\xi))\|\leqslant C_{10}\|\tilde{f}_a(\xi\cdot\psi)\| +\frac{1}{2}\|\Lambda_1(\xi)\|.
\end{equation}
Thus, from~\eqref{eq:lambdas_1}, \eqref{eq:lambdas_2} and~\eqref{estimate:f_a} it implies the first two estimates of~\eqref{estimates_lyp_3_1}.

By differentiating \eqref{eq:lambdas} with respect to $\xi_i$, it implies from~\eqref{cmr_ode1} and~\eqref{cmr_estimate} that
\begin{equation} \label{estimate:partial_xi}
	\|\partial_{\xi_i} \Lambda_2(\xi)\|\leqslant C_{11}\mathcal{Q}, \quad \|\partial_{\xi_i} \Lambda_1(\xi)\|_{H^2} \leqslant C_{12}\left( \mathcal{Q} + \|\partial_{\xi_i} \Lambda_2(\xi)\|\right)
\end{equation}
with
\begin{equation*}
	\mathcal{Q}:= \|\tilde{f}^{\prime}_a(\xi\cdot\psi + \Lambda_1(\xi))(\psi_i + \partial_{\xi_i}\Lambda_1(\xi))\| +\|f_a\|_{C^3_{\mathrm{b}}(\bar{\Omega}\times\mathbb{R})}\|\tilde{f}_a(\xi\cdot\psi + \Lambda_1(\xi))\|.
\end{equation*}
Combining the first two estimates of~\eqref{estimates_lyp_3_1} and~\eqref{estimate:partial_xi} with the mean-value theorem gives 
\begin{equation*}
\begin{split}
	&\|\partial_{\xi_i} \Lambda_1(\xi)\|_{H^2} + \|\partial_{\xi_i} \Lambda_2(\xi)\| \\
	&\leqslant C_{13}\mathcal{Q} \\
	&\leqslant C_{14}\left(\|\tilde{f}^{\prime}_a(\xi\cdot\psi)\psi_i \| + \|f_a\|_{C^3_{\mathrm{b}}(\bar{\Omega}\times\mathbb{R})}\|\Lambda_1(\xi)\| + \|f_a\|_{C^3_{\mathrm{b}}(\bar{\Omega}\times\mathbb{R})}\|\tilde{f}_a(\xi\cdot\psi)\| + c^\ast\|\partial_{\xi_i} \Lambda_1(\xi)\|_{H^2} \right)\\
	&\leqslant C_{15}\left(\|\tilde{f}^{\prime}_a(\xi\cdot\psi)\psi_i \| + \|f_a\|_{C^3_{\mathrm{b}}(\bar{\Omega}\times\mathbb{R})}\|\tilde{f}_a(\xi\cdot\psi)\| + c^\ast\|\partial_{\xi_i} \Lambda_1(\xi)\|_{H^2} \right),
\end{split}
\end{equation*}
which implies the third estimate of~\eqref{estimates_lyp_3_1} by choosing $c^\ast$ small enough. The last estimate of~\eqref{estimates_lyp_3_1} follows from the third estimate and the mean-value theorem. This concludes the proof.
\end{proof}

Next, we are to give a computation of the integral $\int_{\Omega}F_a(x, \xi\cdot\psi)\mathrm{d}x $.

\begin{lemma} \label{decomposition_2}
Given any $c>0$, there exists a function $f_a$ belonging to $C^{\infty}(\bar{\Omega}\times\mathbb{R})\cap {C^3_{\mathrm{b}}(\bar{\Omega}\times\mathbb{R})}$ such that $\|f_a\|^2_{C^2_{\mathrm{b}}(\bar{\Omega}\times\mathbb{R})} < c$, and the following properties hold for some $\delta > 0$.
\begin{itemize}
\item[{\rm{(1)}}] One has the decomposition
\[
\int_{\Omega}F_a(x, \xi\cdot\psi)\mathrm{d}x =  \mathcal{W}(\xi) + \Upsilon_2(\xi),
\]
where
\begin{equation*}
\mathcal{W}(\rho\cos\theta, \rho\sin\theta) =
 \left\{
\begin{array}{cc}
C\exp\left\{\frac{1}{1-\rho M(\theta)}\right\}\sin\left( \frac{1}{\rho M(\theta)-1} -\theta\right), & \rho M(\theta) >1, \\
0,& \rho M(\theta)\leqslant 1
\end{array}
\right.
\end{equation*}
with constant $C>0$, and $\Upsilon_2(\xi)$ is a smooth function on $B_{\delta}$ satisfying the conditions below.
\begin{itemize}
\item $\Upsilon_2(\xi) = 0$ on $B_{0}$.
\item Let $Q_2(\rho, \theta) := \exp\left\{\frac{1}{\rho M(\theta)-1}\right\}\left(\rho M(\theta)-1\right)^{\frac{1}{2}}\Upsilon_2 (\rho\cos\theta, \rho\sin\theta)$. One has 
\[
	\limsup_{\rho M(\theta)-1\rightarrow 0^{+}}\left\{ |Q_2(\rho, \theta)| + |\partial_{\theta}Q_2(\rho, \theta)| + (\rho M(\theta)-1)^2|\partial_{\rho}Q_2(\rho, \theta)|\right\} < +\infty.
\]
\end{itemize}
\item[{\rm{(2)}}] For any $\varrho > 0$ there exists a positive constant $C(\varrho)$ such that
\begin{equation} \label{estimate:nonlinearity}
	\sup_{x\in\Omega}|f_a(x, \rho\Phi)| + \sup_{x\in\Omega}|f^{\prime}_a(x, \rho\Phi)| \leqslant C(\varrho)\exp\left\{ \frac{1-\varrho}{1- \rho M(\theta)} \right\},
\end{equation}
whenever $1- \rho M(\theta)<0$.
\end{itemize}
\end{lemma}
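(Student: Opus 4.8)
\emph{Strategy.} The statement is a construction: we must engineer the interior nonlinearity so that the reduced energy $\int_{\Omega}F_a(x,\xi\cdot\psi)\,\mathrm{d}x$ reproduces the prescribed spiral function $\mathcal{W}$ up to a controllable remainder $\Upsilon_2$. The organizing observation is that $\xi\cdot\psi=\rho\Phi(x,\theta)$ in the polar coordinates $\xi=(\rho\cos\theta,\rho\sin\theta)$, so that with $I(\rho,\theta):=\int_{\Omega}F_a(x,\rho\Phi(x,\theta))\,\mathrm{d}x$ the whole integral is governed by the behaviour of $\Phi(\cdot,\theta)$ near its nondegenerate positive maximum $M(\theta)$, attained at the unique point $x(\theta)$ with negative-definite Hessian (Lemma~\ref{eigenvalue:multiple}). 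The plan is therefore to prescribe the profile of $F_a$ near the threshold value and to read off both $\mathcal{W}$ and $\Upsilon_2$ from a Laplace-type asymptotic expansion of $I(\rho,\theta)$ as $\rho M(\theta)\to 1^{+}$.

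\emph{Choice of $f_a$.} Set $t:=\rho M(\theta)-1$. First I would take $F_a(x,s)$ to vanish identically for $s\le 1$; then $F_a(x,\rho\Phi(x,\theta))$ is nonzero only where $\rho\Phi>1$, which forces $I=0$ (hence $\mathcal{W}=\Upsilon_2=0$) on $B_0$, matching the required vanishing on $B_0$. Near $s=1^{+}$ the profile is chosen of the form $e^{-1/(s-1)}$ multiplied by an oscillation in $1/(s-1)$; the exponential flatness guarantees that $F_a$ glues to $0$ in $C^{\infty}$ at $s=1$, and rescaling the amplitude by the free constant $C$ makes $\|f_a\|_{C^{2}_{\mathrm{b}}(\bar{\Omega}\times\mathbb{R})}$ as small as the hypothesis demands (extending $F_a$ arbitrarily but smoothly and boundedly outside the relevant range keeps $f_a\in C^{\infty}\cap C^{3}_{\mathrm{b}}$). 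To generate the phase shift $-\theta$ inside the sine, I would modulate $F_a$ spatially near the critical circle $\{x(\theta):\theta\in[0,2\pi)\}$ by the angular coordinate $\vartheta$ of $x$: since the maximiser $x(\theta)$ sits precisely at angle $\vartheta=\theta$, two summands carrying the factors $\cos\vartheta$ and $\sin\vartheta$ contribute $\cos\theta$ and $\sin\theta$ to the leading asymptotics and recombine into $\sin\!\big(1/t-\theta\big)$.

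\emph{Asymptotics and estimates.} Using the negative-definite Hessian from Lemma~\ref{eigenvalue:multiple}, I would evaluate $I(\rho,\theta)$ by the co-area/Morse method: the super-level sets $\{x:\rho\Phi(x,\theta)>u\}$ are, near the peak, ellipsoids of volume comparable to $(\rho M(\theta)-u)^{3/2}$, so $I$ reduces to a one-dimensional integral of the chosen profile against this volume element. Carrying out the expansion as $t\to0^{+}$ produces the leading term $\mathcal{W}=Ce^{-1/t}\sin(1/t-\theta)$, while the subleading contributions---coming from the non-Gaussian part of $\Phi$, the Jacobian of the Morse coordinates, and the $\theta$-variation of $M(\theta)$ and $x(\theta)$---are collected into $\Upsilon_2$. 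Differentiating this expansion in $\rho$ and $\theta$, which is legitimate because $\theta\mapsto(M(\theta),x(\theta))$ is smooth and the profile is explicit, yields exactly the weighted bounds asserted for $Q_2=e^{1/t}t^{1/2}\Upsilon_2$, namely boundedness of $|Q_2|+|\partial_\theta Q_2|+t^{2}|\partial_\rho Q_2|$ as $t\to0^{+}$. Finally, the pointwise estimate~\eqref{estimate:nonlinearity} is immediate from the profile: at the maximum the argument of $f_a$ is $1+t$, where $f_a\lesssim e^{-1/t}$, and the slack $\varrho$ absorbs the polynomial prefactors.

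\emph{Main obstacle.} The delicate part is the inverse problem of reverse-engineering the radial profile and spatial modulation of $F_a$ so that the leading Laplace term is \emph{exactly} $\mathcal{W}$---with the correct $e^{-1/t}$ envelope, the $\sin(1/t)$ oscillation, and in particular the $-\theta$ phase---while \emph{simultaneously} forcing the remainder $\Upsilon_2$ to obey the sharp weighted $C^{1}$ estimates above. These estimates are precisely what will later ensure, via Lemma~\ref{decomposition_1} and Proposition~\ref{ode_nonconvergence}, that the spiralling zero set and the non-vanishing gradient of $\Xi$ survive the addition of $\Upsilon_1+\Upsilon_2$; hence no term may be discarded and the error bounds must be quantitatively matched to the envelope of $\mathcal{W}$. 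Keeping $f_a$ globally $C^{3}_{\mathrm{b}}$ and $C^{2}$-small throughout is a necessary, if more routine, bookkeeping constraint.
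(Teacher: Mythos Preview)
Your strategy is correct and is precisely the construction carried out in Pol\'a\v{c}ik--Rybakowski \cite[Proposition~3.1]{polavcik2002nonconvergent}, which is exactly what the paper invokes: the authors give no argument of their own here, observing only that $\int_{\Omega}F_a(x,\xi\cdot\psi)\,\mathrm{d}x$ is independent of $\Lambda(\xi)$ and hence of the particular evolution equation, so the parabolic-case construction transfers verbatim. Your outline---flat $F_a$ below the threshold, an $e^{-1/(s-1)}$ profile with oscillatory phase above it, Laplace/Morse expansion about the nondegenerate maximum $x(\theta)$ of $\Phi(\cdot,\theta)$, and spatial modulation to produce the $-\theta$ phase shift---matches that reference in all essentials.
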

\begin{proof}
We observe that the integral $\int_{\Omega}F_a(x, \xi\cdot\psi)\mathrm{d}x $ does not involve $\Lambda(\xi)$, and thus, it does not depend on the specific form of the evolution equation in question. Therefore, the proof of this lemma we omit here is the same as that in \cite[Proposition 3.1]{polavcik2002nonconvergent}. 
\end{proof}

Based on these preliminaries above, we are now ready to state our non-convergence result.

\begin{theorem}\label{thm:main1}
Let $a(x)$ and $f_a$ be the functions as in Lemma~\ref{eigenvalue:multiple} and Lemma~\ref{decomposition_2}, respectively. If one chooses $f_0(x, u) = -a(x)u + f_a(x, u)$, then the solution $u(t, \cdot)$ of equation~\eqref{equ:nonconvergence} is bounded in $H^2(\Omega)$ and its $\omega-$limit set is a continuum in $H^1(\Omega)$ homeomorphic to the unit circle $S^1$.
\end{theorem}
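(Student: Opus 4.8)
The plan is to collapse the infinite-dimensional dynamics of~\eqref{equ:nonconvergence} onto a two-dimensional centre manifold and then quote Proposition~\ref{ode_nonconvergence}. Since $f_a$ is supplied by Lemma~\ref{decomposition_2} with $\|f_a\|_{C^2_{\mathrm b}(\bar\Omega\times\mathbb R)}$ as small as we wish and $f_a\in C^\infty\cap C^3_{\mathrm b}$, the map $\mathcal F$ in~\eqref{evolu_equ:nonconvergence} meets the smallness assumption of Lemma~\ref{cm_reduction}. Hence~\eqref{evolu_equ:nonconvergence} possesses a global centre manifold $\mathcal M_{f_0}$ on which the flow is governed by the planar ODE~\eqref{cmr_ode}, $\dot\xi=h(\xi)$, with Lyapunov functional $\Xi$. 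I would then take the initial datum to be the point $U_0=(\xi_0\cdot\psi+\Lambda_1(\xi_0),\Lambda_2(\xi_0))^\top\in\mathcal D$ corresponding to the $\xi_0$ produced by Proposition~\ref{ode_nonconvergence}; by Lemma~\ref{cm_reduction}(2) the (classical, by Lemma~\ref{wellposedness}) solution of~\eqref{equ:nonconvergence} is then $u(t,\cdot)=\xi(t)\cdot\psi+\Lambda_1(\xi(t))$, where $\xi(t)$ solves~\eqref{cmr_ode}.

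Everything thus reduces to verifying the three geometric hypotheses of Proposition~\ref{ode_nonconvergence} for $\Xi$. Stacking the decompositions of Lemma~\ref{decomposition_1} and Lemma~\ref{decomposition_2} gives
\[
\Xi(\xi)=\mathcal W(\xi)+\Upsilon_1(\xi)+\Upsilon_2(\xi),
\]
with $\mathcal W$ the explicit spiral function of Lemma~\ref{decomposition_2}. The principal part $\mathcal W$ already exhibits the desired geometry: it vanishes on $\bar B_0$, its nodal set outside $B_0$ is the union of two spiralling curves $\mathrm O_1,\mathrm O_2$ that wind onto $S=\{\rho M(\theta)=1\}$, and $\nabla\mathcal W\ne0$ along them. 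Conditions (1)--(3) of Proposition~\ref{ode_nonconvergence} will therefore follow once I show that $\Upsilon_1+\Upsilon_2$ is negligible, near $S$, against $\mathcal W$ and $\nabla\mathcal W$, so that the nodal set is only slightly bent and $\nabla\Xi$ stays nonzero on $B_\delta\setminus B_0$.

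This domination estimate is the crux, and I expect it to be the main technical obstacle. Writing $u:=\rho M(\theta)-1\to0^+$, one has $\mathcal W\sim\exp\{-1/u\}\sin(u^{-1}-\theta)$, so $\mathcal W\sim\exp\{-1/u\}$ and, along the nodal curves, $|\nabla\mathcal W|\sim u^{-2}\exp\{-1/u\}$. The estimate underlying Lemma~\ref{decomposition_1} controls $|\Upsilon_1|+|\nabla_\xi\Upsilon_1|$ \emph{quadratically}, by a multiple of $\sup_x|f_a(x,\xi\cdot\psi)|^2+\sup_x|f_a'(x,\xi\cdot\psi)|^2$, and Lemma~\ref{decomposition_2}(2) bounds each of $\sup_x|f_a|$, $\sup_x|f_a'|$ by $C(\varrho)\exp\{(1-\varrho)(1-\rho M(\theta))^{-1}\}=C(\varrho)\exp\{-(1-\varrho)/u\}$ for every $\varrho>0$; choosing $\varrho<\tfrac12$ makes $2(1-\varrho)>1$, so $\Upsilon_1$ and $\nabla\Upsilon_1$ are $O(\exp\{-2(1-\varrho)/u\})$, exponentially smaller than $\mathcal W$ and $\nabla\mathcal W$. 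The remaining term $\Upsilon_2$, through the bound on $Q_2=\exp\{u^{-1}\}u^{1/2}\Upsilon_2$ in Lemma~\ref{decomposition_2}(1), carries the same spiral weight $\exp\{-1/u\}$ as $\mathcal W$ but with amplitude $o(1)$ relative to the oscillation of $\mathcal W$. An implicit-function-theorem analysis of $\mathcal W+\Upsilon_1+\Upsilon_2=0$ then produces $C^1$ radial graphs $\rho=l_1(\theta),l_2(\theta)$ with the ordering and asymptotics $l_2>l_1>l_2(\cdot+2\pi)>1/M$ and $l_2-1/M\to0$ required in hypothesis (2), while keeping $\nabla\Xi\ne0$ off $B_0$. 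Proposition~\ref{ode_nonconvergence} then yields $\xi_0$ for which $\xi(t)$ is bounded on $[0,\infty)$ with $\omega(\xi)=S$.

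It remains to lift the conclusion to the PDE. As $\xi(t)$ stays in $B_\delta$ and $\Lambda:\mathbb R^2\to\mathcal D$ is bounded, the pair $(u(t,\cdot),u_t(t,\cdot))^\top=(\xi(t)\cdot\psi+\Lambda_1(\xi(t)),\Lambda_2(\xi(t)))^\top$ is bounded in $\mathcal D\subset H^2(\Omega)\times H^1(\Omega)$; in particular $u(t,\cdot)$ is bounded in $H^2(\Omega)$. The map $\Theta:\xi\mapsto\xi\cdot\psi+\Lambda_1(\xi)$ is continuous from $\mathbb R^2$ to $H^1(\Omega)$ and, because $\psi_1,\psi_2$ are linearly independent while $\Lambda_1$ has Lipschitz constant $O(\|f_a\|_{C^1_{\mathrm b}})\ll1$, it is injective; hence $\Theta|_S$ is a homeomorphism of $S\cong S^1$ onto a continuum in $H^1(\Omega)$. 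Every point of $S$ is an equilibrium of~\eqref{cmr_ode}, so by the equilibrium relation behind~\eqref{cmr_ode3} each $\Theta(\xi)$, $\xi\in S$, solves the stationary problem~\eqref{eq_stationary} and thus belongs to $\mathcal N$. Finally, since $u(t,\cdot)=\Theta(\xi(t))$, $\xi(t)$ is bounded, and $\Theta$ is continuous, the $\omega$-limit set $\omega(u)$ equals $\Theta(\omega(\xi))=\Theta(S)$, a continuum in $H^1(\Omega)$ homeomorphic to $S^1$, as claimed.
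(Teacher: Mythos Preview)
Your argument follows the paper's route: reduce to the planar ODE via Lemma~\ref{cm_reduction}, decompose $\Xi=\mathcal W+\Upsilon_1+\Upsilon_2$ through Lemmas~\ref{decomposition_1} and~\ref{decomposition_2}, show that $\Upsilon_1,\Upsilon_2$ are negligible against $\mathcal W$ near $S$, apply Proposition~\ref{ode_nonconvergence}, and lift the $\omega$-limit set back via the graph map $\xi\mapsto\xi\cdot\psi+\Lambda_1(\xi)$. The paper packages your implicit-function perturbation step into a direct citation of \cite[Lemma~2.3]{polavcik2002nonconvergent}, after recording that $\Upsilon_1$ obeys exactly the same $Q$-type limsup bound as $\Upsilon_2$ in Lemma~\ref{decomposition_2}(1); your $\varrho<\tfrac12$ computation is precisely what delivers that bound.

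One hypothesis of Proposition~\ref{ode_nonconvergence} you do not address is that the set of equilibria of~\eqref{cmr_ode} coincides with $\bar B_0$ (not merely that the three geometric conditions on $\Xi$ hold). The paper dispatches this in a line by combining the first assertion of Lemma~\ref{decomposition_1}---every equilibrium of~\eqref{cmr_ode} is a critical point of $\Xi$---with conditions~(1) and~(3), which force all critical points in $B_\delta$ into $\bar B_0$. You should insert this check before invoking the proposition.
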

\begin{proof}
By lemmas~\ref{cm_reduction}, \ref{decomposition_1}, and~\ref{decomposition_2}, the Lyapunov functional $\Xi(\xi)$ associated with equation~\ref{cmr_ode} can be decomposed into the following form
\[
	\Xi(\xi) = \Upsilon_1(\xi) + \Upsilon_2(\xi) + \mathcal{W}(\xi).
\]
Combining \eqref{estimates_lyapunov_3} with \eqref{estimate:nonlinearity} gives 
\[
|\Upsilon_1(\xi)| + |\partial_{\xi_i}\Upsilon_1(\xi)|\leqslant C(\vartheta, \max\{1, \|f_a\|^2_{C^3_{\mathrm{b}}(\bar{\Omega}\times\mathbb{R})}\})\exp\left\{ \frac{1-\vartheta}{1- \rho M(\theta)} \right\},
\]
which guarantees the following results: 
\begin{itemize}
\item[-] $\Upsilon_1(\xi) = 0$ on $\mathrm{B}_{0}$.
\item[-] Let $Q_1(\rho, \theta) := \exp\left\{\frac{1}{\rho M(\theta)-1}\right\}\left(\rho M(\theta)-1\right)^{\frac{1}{2}}\Upsilon_1 (\rho\cos\theta, \rho\sin\theta)$. One has 
\[
	\limsup_{\rho M(\theta)-1\rightarrow 0^{+}}\left\{ |Q_1(\rho, \theta)| + |\partial_{\theta}Q_1(\rho, \theta)| + (\rho M(\theta)-1)^2|\partial_{\rho}Q_1(\rho, \theta)|\right\} < +\infty.
\]
\end{itemize}
From \cite[Lemma 2.3]{polavcik2002nonconvergent} we know $\Xi(\xi)$ satisfies the three conditions in Proposition~\ref{ode_nonconvergence}. Since we have proved any equilibrium of~\eqref{cmr_ode} is a critical point of $\Xi(\xi)$ in Lemma~\ref{decomposition_1}, by the first and third conditions of Proposition~\ref{ode_nonconvergence} we obtain the set of equilibria of~\eqref{cmr_ode} coincides with $B_{0}$. Thus, all the hypotheses of Proposition~\ref{ode_nonconvergence} have been verified, and hence, we obtain equation~\eqref{cmr_ode} has a bounded solution $\xi(t)$ whose $\omega$-limit set equals a curve $S$ homeomorphic to the unit circle $S^1$. Thanks to Lemma~\ref{cm_reduction}, $u(t, \cdot):=\xi(t)\cdot\psi + \Lambda_1(\xi)$ is a solution of~\eqref{equ:nonconvergence}, and its $\omega$-limit set is
\[
	\omega(u) = \{\xi\cdot\psi + \Lambda_1(\xi): \xi\in S\}\subset H^2(\Omega).
\]
We also know the map $\Lambda(\cdot)\in C^2_{\mathrm{b}}(\mathbb{R}^2\rightarrow\mathcal{D})$ and takes value in $\mathcal{D}\cap E_2$. Therefore, we obtain the conclusion that the solution $u(t, \cdot)$ is bounded in $H^2(\Omega)$ and $\omega(u)$ is homeomorphic to $S^1$. 
\end{proof}

\section{Convergence in a {\L}ojasiewicz-type landscape}
\label{sec:convergence}

In the previous section, it has been shown that smoothness of the interior nonlinearity $f_0$ cannot guarantee the convergence of the solution to points of equilibria. This section will demonstrate that the solution of equation~\eqref{wave_eq2} in a {\L}ojasiewicz-type landscape evolves to some equilibria as time goes to infinity.

\subsection{{\L}ojasiewicz-type landscape}
\label{sec:lojasiewicz}
A functional $E(\cdot)$ is called a {\L}ojasiewicz-type landscape if for each $\varphi\in\mathcal{N}$ there exist numbers $C>0$ and $\sigma >0$, both dependent on $\varphi$, such that for some $\eta\in(0, \frac{1}{2}]$ one has
\begin{equation}\label{lojasiewicz}
	\|- \Delta u +f_0(x, u)  \| + \| \partial_{\nu} u + u\|_{L^{2}(\Gamma)}\geqslant C|E(u) -E(\varphi)|^{1-\eta},
\end{equation}
whenever $u\in H^2(\Omega)$, $\|u-\psi\|_{H^1(\Omega)} < \sigma$. The inequality~\eqref{lojasiewicz} is so-called the {\L}ojasiewicz--Simon inequality, and the number $\eta$ is called {\L}ojasiewicz exponent (cf.~\cite{Hao2006, chill2003lojasiewicz, haraux2003rate}).
As can be seen, the nonlinearity $f_0$ makes $E(u)$ a {\L}ojasiewicz-type landscape. Thus, we refer to this nonlinearity as a {\L}ojasiewicz-type function, and denote by {\L} the space of {\L}ojasiewicz-type functions.

It is necessary to point out that there exists a broad class of {\L}ojasiewicz-type functions. In~\cite[Lemma 3.8]{Hao2006}, the authors have proved that all analytic functions belong to the space {\L}.  
In addition, the function
\begin{equation}\label{example_2}
f_0(x, s) = 
\left
    \{
        \begin{array}{ll}
            \lambda(x)s - se^{-\frac{1}{s^2}}, & s \neq 0, \\
                  0, & s=0,
        \end{array}
\right.
\end{equation}
which has been proposed in Section~\ref{sec:stationary}, is smooth for $s\in\mathbb{R}$ but non-analytic at $s=0$. However, it serves as an explicit example of the {\L}ojasiewicz-type function. Indeed, for the functional $E(\cdot)$ associated with the Robin--Laplace operator and the function~\eqref{example_2}, it is easy to see that the hypotheses in~\cite[Corollary 4.6]{chill2003lojasiewicz} are satisfied. Let $\varphi\neq 0 $ be the critical point of this functional $E(\cdot)$. Then there exist constants $\tilde{C}>0$ and $\tilde{\sigma} > 0$ such that for every $u\in H^2(\Omega)$ with $\|u-\varphi\|_{H^1(\Omega)}<\tilde{\sigma}$
\begin{equation} \label{example_loj1}
	\|E^{\prime}(u)\|_{H^{-1}} \geqslant C|E(u) - E(\varphi)|^{1-\tilde{\eta}},
\end{equation}
where $E^{\prime}(\cdot)$ is the first derivative of $E$. Due to
\begin{equation*}
\begin{split}
\langle E^{\prime}(u), v\rangle &= \int_{\Omega}\nabla u\cdot\nabla v\mathrm{d}x +  \int_{\Gamma}uv\mathrm{d}S + \int_{\Omega}f_0(x, u)v\mathrm{d}x\\
&= \int_{\Omega}\left(-\triangle u + f_0(x, u)\right)v\mathrm{d}x +  \int_{\Gamma}(\partial_{\nu}u+u)v\mathrm{d}S \\
&\leqslant \|-\triangle u + f_0(x, u)\| \|v\|+  \|\partial_{\nu}u+u\|_{L^2(\Gamma)} \|v\|_{L^2(\Gamma)}\\
&\leqslant \left(\|-\triangle u + f_0(x, u)\| +  \|\partial_{\nu}u+u\|_{L^2(\Gamma)} \right)\|v\|_{H^1(\Omega)}
\end{split}
\end{equation*}
for any $v\in H^1(\Omega)$, we get 
\begin{equation}\label{example_loj2}
\|E^{\prime}(u)\|_{H^{-1}} \leqslant \|-\triangle u + f_0(x, u)\| +  \|\partial_{\nu}u+u\|_{L^2(\Gamma)}. 
\end{equation}
Combining\eqref{example_loj1} and \eqref{example_loj2} gives the inequality~\eqref{lojasiewicz}, which means $f_0(x, s)$ given in~\eqref{example_2} is a {\L}ojasiewicz-type function.

\subsection{Geometry of the domain }

To ensure the effectiveness of the restricted boundary damping, it is necessary to impose the following constraint on the shape of the domain $\Omega$.
\begin{itemize}
\item (A4) The domain $\Omega$ is quasi-star-sahped, that is, there exists a $C^3$ vector field 
\[
	\textsl{a}=(\textsl{a}_1, \textsl{a}_2, \textsl{a}_3): \bar{\Omega}\rightarrow\mathbb{R}^3
\]
such that 
	\begin{equation}\label{qstar_sign}
		\textsl{a}(x)\cdot\nu \geqslant 0, \quad x\in\Gamma,
	\end{equation}
	and its Jacobian matrix $\{\frac{\partial \textsl{a}_{i}}{\partial x_j}\}$ is positive definite, that is,
	\begin{equation}\label{qstar_Jacobi}
		\sum_{i, j}\frac{\partial \textsl{a}_{i}(x)}{\partial x_j}\zeta_i\zeta_j \geqslant \varpi|\zeta|^2,\quad x\in\bar{\Omega}, \quad \zeta\in\mathbb{R}^3
	\end{equation}
	for some positive constant $\varpi$.
\end{itemize}
As an illustration, we give some examples of quasi-star-shaped domains: (1) convex domains with smooth boundaries, such as a ball or a cylinder; (2) a star-shaped domain with respect to some point $x_0\in\Omega$, defined as for all $x\in\bar{\Omega}$, $\Omega$ contains the segment $[x_0, x)=\{(1-s)x_0 + sx: 0\leqslant s <1\}$. It is easy to see that a star-shaped domain is not necessarily convex.

We note that the quasi-star-shaped domain possesses the following property, which will be applied to obtain the boundedness of the classical solution to equation~\eqref{wave_eq2}.
\begin{proposition}\label{quasi_star}
If $\Omega$ is a quasi-star-shaped domain, then there exists a $C^3$ vector field 
\[
	\textsl{b}=(\textsl{b}_1, \textsl{b}_2, \textsl{b}_3): \bar{\Omega}\rightarrow\mathbb{R}^3
\]
such that 
\begin{itemize}
\item[{\rm(1)}] for some sufficiently small constant $\varpi_1 >0$ one has
	\begin{equation}\label{qstar_sign_2}
		 \textsl{b}(x)\cdot\nu \geqslant \varpi_1, \quad x\in\Gamma,
	\end{equation}
\item[{\rm(2)}] the Jacobian matrix $\{\frac{\partial \textsl{b}_{i}}{\partial x_j}\}$ is positive definite, in particular,
	\begin{equation}\label{qstar_Jacobi_2}
		\sum_{i, j}\frac{\partial \textsl{b}_{i}(x)}{\partial x_j}\zeta_i\zeta_j \geqslant \varpi_2(\nabla\cdot\textsl{b})|\zeta|^2,\quad x\in\bar{\Omega}, \quad \zeta\in\mathbb{R}^3
	\end{equation}
	for $\varpi_2>0$ small enough,
\item[{\rm(3)}] and its trace satisfies $\nabla\cdot\textsl{b}(x) > \varpi_3:=\varpi_1\frac{|\Gamma_1|}{|\Omega|}$ for $x\in\bar{\Omega}$.
\end{itemize} 
\end{proposition}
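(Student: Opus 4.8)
The plan is to obtain $\textsl{b}$ as a small, explicit perturbation of the given quasi-star-shaped field $\textsl{a}$, using the transversality of $\textsl{a}$ only to upgrade the non-strict sign condition into a strict one. First I would fix an auxiliary $C^3$ (indeed $C^\infty$) function $d\colon\bar\Omega\to\mathbb{R}$ whose gradient is transversal to the boundary, say $\nabla d\cdot\nu = 1$ on $\Gamma$. Since $\Gamma$ is smooth, one can take $d$ to be a smooth extension (via a cutoff) of the signed distance to $\Gamma$, with the sign chosen so that the outward normal derivative is positive; this is where the regularity of $\Gamma$ enters. Then I would set
\[
	\textsl{b} = \textsl{a} + \epsilon\,\nabla d,
\]
with a small $\epsilon>0$ to be fixed below. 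Because $d$ is smooth, $\nabla d\in C^3$ and hence $\textsl{b}\in C^3$, as required. The term $\epsilon\,\nabla d$ is designed to make $\textsl{b}\cdot\nu$ strictly positive on $\Gamma$ while being small enough to leave the positive-definiteness of the Jacobian intact.

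Next I would verify the three conclusions in order, starting with the Jacobian. On $\Gamma$ one has $\textsl{b}\cdot\nu = \textsl{a}\cdot\nu + \epsilon\,\nabla d\cdot\nu \geqslant \epsilon$ by~\eqref{qstar_sign}, so~\eqref{qstar_sign_2} holds with $\varpi_1:=\epsilon$. For the Jacobian, $\{\partial \textsl{b}_i/\partial x_j\} = \{\partial \textsl{a}_i/\partial x_j\} + \epsilon\,\nabla^2 d$, and since $\textsl{a}$ is $C^3$ and $d$ is smooth on the compact set $\bar\Omega$, the Hessian is bounded, $|\nabla^2 d|\leqslant M$. Thus~\eqref{qstar_Jacobi} gives, for $\epsilon < \varpi/M$,
\[
	\sum_{i,j}\frac{\partial \textsl{b}_i}{\partial x_j}\zeta_i\zeta_j \geqslant (\varpi - \epsilon M)|\zeta|^2 =: \varpi'|\zeta|^2, \quad \varpi'>0.
\]
In particular the symmetric part of the Jacobian of $\textsl{b}$ has all eigenvalues $\geqslant\varpi'$, so in $d=3$ its trace obeys $\nabla\cdot\textsl{b}\geqslant 3\varpi'$, while continuity on $\bar\Omega$ yields an upper bound $\nabla\cdot\textsl{b}\leqslant C_0$.

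Conditions (2) and (3) then follow by fixing the remaining constants. For~\eqref{qstar_Jacobi_2}, since the left-hand side is bounded below by $\varpi'|\zeta|^2$ and $\nabla\cdot\textsl{b}\leqslant C_0$, it suffices to take $\varpi_2 := \varpi'/C_0$ (or any smaller positive number), whence $\sum_{i,j}(\partial \textsl{b}_i/\partial x_j)\zeta_i\zeta_j \geqslant \varpi_2(\nabla\cdot\textsl{b})|\zeta|^2$. For~(3), recall $\varpi_3 = \varpi_1|\Gamma_1|/|\Omega| = \epsilon\,|\Gamma_1|/|\Omega|$; since $\nabla\cdot\textsl{b}\geqslant 3\varpi' = 3(\varpi - \epsilon M)$, the desired inequality $\nabla\cdot\textsl{b} > \varpi_3$ reduces to $3(\varpi - \epsilon M) > \epsilon\,|\Gamma_1|/|\Omega|$, which holds once $\epsilon$ is small enough.

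The argument is essentially elementary, and I do not expect a genuine obstacle so much as careful bookkeeping: the constants must be chosen in the right order, namely first fix $\epsilon$ small enough that simultaneously $\varpi'>0$ and $3(\varpi-\epsilon M) > \epsilon\,|\Gamma_1|/|\Omega|$, then set $\varpi_1=\epsilon$ and finally $\varpi_2\leqslant \varpi'/C_0$. The one place requiring genuine input is the construction of $d$ with $\nabla d\cdot\nu=1$ on $\Gamma$; this rests on the smoothness of the signed distance function in a tubular neighborhood of the smooth boundary $\Gamma$, which is standard but is the step that actually uses the hypothesis on the domain's regularity rather than just the quasi-star-shaped structure.
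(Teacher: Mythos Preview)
Your proof is correct and follows the same strategy as the paper: perturb the given field $\textsl{a}$ by a small gradient $\epsilon\nabla\phi$ with $\partial_\nu\phi=1$ on $\Gamma$, then verify that the perturbation is small enough to preserve positive definiteness of the Jacobian. The only difference is in the choice of the potential $\phi$: you use the (cut-off) signed distance to $\Gamma$, whereas the paper takes $\phi$ to be the solution of the Neumann problem $\triangle\phi = |\Gamma_1|/|\Omega|$ in $\Omega$, $\partial_\nu\phi=1$ on $\Gamma$. The paper's choice makes the divergence of the perturbation \emph{constant}, so that $\nabla\cdot\textsl{b}=\nabla\cdot\textsl{a}+\varpi_1|\Gamma_1|/|\Omega|$ and~(3) follows immediately from $\nabla\cdot\textsl{a}>0$, without the detour through $\nabla\cdot\textsl{b}\geqslant 3\varpi'$ and the extra smallness constraint $3(\varpi-\epsilon M)>\epsilon|\Gamma_1|/|\Omega|$. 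Your construction, on the other hand, is more elementary in that it avoids invoking elliptic regularity to produce a $C^4$ solution of a PDE.
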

\begin{proof}
Let $\textsl{b}(x) := \textsl{a}(x) + \varpi_1\nabla\phi$, where $\varpi_1 >0$ is a constant, and $\phi: \bar{\Omega}\rightarrow\mathbb{R}$ satisfies
\[
\left
    \{
        \begin{array}{rl}
            -\triangle\phi + \frac{|\Gamma_1|}{|\Omega|}= 0 & \textrm{in $\Omega$}, \\
                  \partial_{\nu}\phi = 1 & \textrm{on $\Gamma$}.
        \end{array}
\right.
\]
Then we have 
\begin{equation}\label{qstar_prop}
\begin{split}
\textsl{b}(x)\cdot\nu &= \textsl{a}(x)\cdot\nu + \varpi_1\nu\cdot\nabla\phi,\\
\nabla\cdot\textsl{b}(x) &=\nabla\cdot \textsl{a}(x) + \varpi_1\triangle\phi = \nabla\cdot \textsl{a}(x) + \varpi_1\frac{|\Gamma_1|}{|\Omega|},\\
\frac{\partial\textsl{b}_i(x)}{\partial x_j} &= \frac{\partial\textsl{a}_i(x)}{\partial x_j} + \varpi_1\frac{\partial}{\partial x_j}\left(\frac{\partial\phi(x)}{\partial x_i}\right).
\end{split}
\end{equation}
Combining the first equality of~\eqref{qstar_prop} with~\eqref{qstar_sign} implies~\eqref{qstar_sign_2}. By the positive definite property~\eqref{qstar_Jacobi} of $\{\frac{\partial \textsl{a}_{i}}{\partial x_j}\}$, we have $\nabla\cdot \textsl{a}(x) >0$, and hence, the second equality of~\eqref{qstar_prop} gives
\[
\nabla\cdot\textsl{b}(x) > \varpi_1\frac{|\Gamma_1|}{|\Omega|}, \quad x\in\bar{\Omega}.
\]
Note that the matrix $\{\frac{\partial}{\partial x_j}\left(\frac{\partial\phi(x)}{\partial x_i}\right)\}$ is symmetrical. Let $\lambda_{\mathrm{min}}$ be the minimal eigenvalue of this matrix. From the third equality of~\eqref{qstar_prop} and choosing $\varpi_1\ll \frac{\varpi}{|\lambda_{\mathrm{min}}|}$, it deduces that $\{\frac{\partial \textsl{b}_{i}}{\partial x_j}\}$ is positive definite, and there exists a small constant $\varpi_2>0$ such that 
	\begin{equation*}
	\begin{split}
		\sum_{i, j}\frac{\partial \textsl{b}_{i}(x)}{\partial x_j}\zeta_i\zeta_j &= \sum_{i, j}\frac{\partial \textsl{a}_{i}(x)}{\partial x_j}\zeta_i\zeta_j + \varpi_1\sum_{i, j}\frac{\partial}{\partial x_j}\left(\frac{\partial\phi(x)}{\partial x_i}\right)\zeta_i\zeta_j\\
		&\geqslant \varpi_2(\nabla\cdot\textsl{b})|\zeta|^2,\quad x\in\bar{\Omega}, \quad \zeta\in\mathbb{R}^3,
	\end{split}
	\end{equation*}
which means the inequality~\eqref{qstar_Jacobi_2} holds.
\end{proof}

\begin{remark}
We can replace the definition in {\rm(A4)} of a quasi-star-shaped domain with the one provided in this proposition. Moreover, without loss of generality, we can also assume that there exists a positive constant $\chi$ such that
\begin{equation}\label{qstar_estimate}
|\textsl{b}(x)| + |\nabla\cdot\textsl{b}(x)| + |\nabla(\nabla\cdot\textsl{b}(x))| < \chi, \quad  x\in \Gamma; \quad   |\nabla\cdot\textsl{b}(x)| +|\triangle(\nabla\cdot\textsl{b}(x))|< \chi,  \quad  x\in \Omega
\end{equation}
due to the fact that $\textsl{b}$ is a $C^3$ vector field on $\bar{\Omega}$.
\end{remark}

\subsection{Bounded solution}

This subsection aims to establish some estimates for the classical solution of equation~\eqref{wave_eq2}, which plays a crucial role in the proof of our convergence result.

\begin{lemma}\label{solu:boundedness}
Under the assumptions {\rm(A1)}-{\rm(A4)}, the classical solution $(u, u_t)^\top$ of equation~\eqref{wave_eq2} is bounded in $\mathcal{D}$, and for some $T>0$ large enough, there exists a positive constant $C$, dependent only on $\|(u_0, u_1)^\top\|_{\mathcal{D}}$, such that
\begin{equation}\label{important_inequal}
	\|u_t\|  + \|u_{tt}\| \leqslant C\{\|u_t\|_{L^2(\Gamma)}  + \|u_{tt}\|_{L^2(\Gamma)}\}, \quad t>T.
\end{equation}
\end{lemma}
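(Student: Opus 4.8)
The plan is to prove the two assertions separately but with a common engine, namely the multiplier $\textsl{b}\cdot\nabla(\cdot)$ supplied by the quasi-star-shaped field of Proposition~\ref{quasi_star}. First I would recover a bound in $\mathcal{H}$. By the energy identity in Lemma~\ref{wellposedness} and the monotonicity of $g$ (so that $\langle g(u_s),u_s\rangle\geq 0$), the map $t\mapsto\mathcal{E}(u(t))$ is non-increasing, hence $\mathcal{E}(u(t))\leq\mathcal{E}(u(0))$. Assumption~(A2) with $c<\mu_0$ makes $\mathcal{E}$ coercive, $\mathcal{E}(u)\geq c_0\|(u,u_t)^\top\|_{\mathcal{H}}^2-C_0$ via the Poincar\'e-type inequality, so $\sup_t\|(u,u_t)^\top\|_{\mathcal{H}}<\infty$; in particular $u$ is bounded in $H^1(\Omega)$ and $u_t$ in $L^2(\Omega)$.

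To climb to $\mathcal{D}$ I would differentiate the equation in $t$ and set $v=u_t$, which solves $v_{tt}-\Delta v+f_0'(x,u)v=0$ with $\partial_\nu v+v+g'(u_t)v_t=0$ on $\Gamma$. Testing against $v_t$ gives
\[
\frac{d}{dt}\Big(\tfrac12\|v_t\|^2+\tfrac12\|\nabla v\|^2+\tfrac12\|v\|_{L^2(\Gamma)}^2\Big)=-\int_\Gamma g'(u_t)\,|v_t|^2\,\mathrm{d}S-\int_\Omega f_0'(x,u)\,v\,v_t\,\mathrm{d}x .
\]
The boundary term is nonpositive by~(A3). The interior term is the delicate one: rewriting $\int_\Omega f_0'(x,u)v\,v_t\,\mathrm{d}x=\tfrac12\frac{d}{dt}\int_\Omega f_0'(x,u)|v|^2\,\mathrm{d}x-\tfrac12\int_\Omega f_0''(x,u)|v|^2 v\,\mathrm{d}x$, I would absorb the exact time derivative into a perturbed higher energy $\widetilde{\mathcal{E}}_1:=\tfrac12\|(v,v_t)^\top\|_{\mathcal{H}}^2+\tfrac12\int_\Omega f_0'(x,u)|v|^2\,\mathrm{d}x$ and control the remaining cubic term through the growth bound $|f_0''|\leq C(1+|s|)$ of~\eqref{assumption_critical} and the Sobolev embeddings $H^1\hookrightarrow L^6\hookrightarrow L^3$ in $d=3$. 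Combined with the boundary dissipation and the multiplier estimate of the next paragraph, this keeps $\widetilde{\mathcal{E}}_1$ bounded, whence $u_t$ is bounded in $H^1(\Omega)$ and $u_{tt}$ in $L^2(\Omega)$. Finally, reading the equation as the elliptic problem $-\Delta u=-u_{tt}-f_0(x,u)$ with the Robin condition $\partial_\nu u+u=-g(u_t)$ and invoking elliptic regularity (with $f_0(x,u)$ bounded in $L^2$ by the $H^1$-bound and critical growth, and $g(u_t)$ bounded in $H^{1/2}(\Gamma)$) yields the $H^2(\Omega)$-bound on $u$, completing boundedness in $\mathcal{D}$.

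For~\eqref{important_inequal} I would apply the multiplier method to the $v$-equation using the combined multiplier $\textsl{b}\cdot\nabla v+\tfrac12(\nabla\cdot\textsl{b})v$. After integration by parts the first-order interior terms in $|v_t|^2$ and $|\nabla v|^2$ cancel, leaving the quadratic form $\int_\Omega(\nabla v)^\top(D\textsl{b})(\nabla v)\,\mathrm{d}x$, which by~\eqref{qstar_Jacobi_2} together with $\nabla\cdot\textsl{b}>\varpi_3$ is coercive:
\[
\int_\Omega(\nabla v)^\top (D\textsl{b})(\nabla v)\,\mathrm{d}x\geq\varpi_2\int_\Omega(\nabla\cdot\textsl{b})\,|\nabla v|^2\,\mathrm{d}x\geq\varpi_2\varpi_3\,\|\nabla v\|^2 .
\]
Through the Poincar\'e-type inequality this dominates $\|v\|^2=\|u_t\|^2$. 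A separate test of the equation against $v$ produces $\|v_t\|^2=\|\nabla v\|^2+\frac{d}{dt}\int_\Omega v_t v\,\mathrm{d}x+(\text{boundary and nonlinear terms})$, supplying control of $\|v_t\|^2=\|u_{tt}\|^2$. Using $\partial_\nu v=-v-g'(u_t)v_t$ and the sign $\textsl{b}\cdot\nu\geq\varpi_1>0$ of~\eqref{qstar_sign_2}, every boundary contribution is bounded by $\|v\|_{L^2(\Gamma)}^2+\|v_t\|_{L^2(\Gamma)}^2=\|u_t\|_{L^2(\Gamma)}^2+\|u_{tt}\|_{L^2(\Gamma)}^2$; the lower-order terms $\int_\Omega v\,\nabla(\nabla\cdot\textsl{b})\cdot\nabla v\,\mathrm{d}x$ are controlled using~\eqref{qstar_estimate}. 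Collecting these and restricting to $t>T$ large yields~\eqref{important_inequal}.

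The main obstacle is the critical growth of $f_0$. The higher-order interior terms $\int_\Omega f_0'(x,u)v\,v_t\,\mathrm{d}x$ and $\int_\Omega f_0'(x,u)v\,(\textsl{b}\cdot\nabla v)\,\mathrm{d}x$ are genuinely borderline: their control demands the full strength of~\eqref{assumption_critical}, the $\mathcal{H}$-bound on $u$, and the sharp Sobolev exponents in $d=3$, and after the rewriting above they leave a cubic remainder that must be absorbed rather than discarded. Compounding this, the multiplier identity is intrinsically a total time derivative plus interior and boundary terms, so passing to the pointwise-in-time inequality~\eqref{important_inequal} forces me to dispose of the $\frac{d}{dt}\int_\Omega v_t(\textsl{b}\cdot\nabla v)\,\mathrm{d}x$ contribution; this is exactly why the estimate is claimed only for $t>T$, where the already-established uniform $\mathcal{D}$-bound renders the residual lower-order interior quantities subordinate to the boundary data and breaks the apparent circularity between boundedness and observability.
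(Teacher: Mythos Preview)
Your strategy for boundedness in $\mathcal{D}$ is essentially the paper's: energy coercivity gives the $\mathcal{H}$-bound, then one differentiates in time and combines the $v_t$-test with the multipliers $\textsl{b}\cdot\nabla v$ and $(\nabla\cdot\textsl{b})v$ to build a Gronwall-type inequality for a higher-order functional; Gagliardo--Nirenberg interpolation absorbs the critical cubic remainder $\int_\Omega f_0''(x,u)|v|^2v\,\mathrm{d}x$, and elliptic regularity for the Robin problem closes. The paper executes this in more detail (assembling a functional $\mathcal{J}_0$ and estimating pieces $\mathcal{J}_3,\dots,\mathcal{J}_7$ one by one), but the skeleton is yours.

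For the pointwise inequality~\eqref{important_inequal}, however, your plan does not close, and the paper proceeds by an entirely different route. The multiplier identity for $v=u_t$ inevitably produces the term $\frac{d}{dt}\int_\Omega v_t(\textsl{b}\cdot\nabla v)\,\mathrm{d}x$ (and $\frac{d}{dt}\int_\Omega v_t v\,\mathrm{d}x$ from the $v$-test) which is of the \emph{same order} as $\|v_t\|^2+\|\nabla v\|^2$; it is neither lower-order nor sign-definite, and a uniform $\mathcal{D}$-bound only dominates it by a constant, not by the boundary traces at the given instant $t$. Since~\eqref{important_inequal} is homogeneous in $(v,v_t)$, a constant cannot be absorbed, so there is no mechanism in the multiplier calculus to convert that total time derivative into pointwise boundary control. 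What the multipliers yield is at best a time-integrated observability inequality, which is a different statement; your closing paragraph acknowledges the difficulty but does not actually resolve it.

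The paper instead obtains~\eqref{important_inequal} by a compactness--uniqueness argument. One supposes a sequence of solutions $v_l$ of the linearised problem with $\bigl(\|v_l\|+\|\partial_t v_l\|\bigr)\big/\bigl(\|v_l\|_{L^2(\Gamma)}+\|\partial_t v_l\|_{L^2(\Gamma)}\bigr)\to\infty$; the $\mathcal{D}$-bound just proved supplies enough compactness on $(0,T)\times\Omega$ (via Aubin--Lions) to extract a limit $v$ satisfying $v_{tt}-\Delta v+f_0'(x,u)v=0$ together with the over-determined Cauchy data $\partial_\nu v=v=v_t=0$ on $(0,T)\times\Gamma$. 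A unique continuation theorem for wave equations with $L^3$ potential then forces $v\equiv 0$, contradicting the assumption; a normalisation step (dividing by $\|v_l\|_{L^2((0,T)\times\Omega)}$) disposes of the case where the weak limit vanishes. This compactness--unique-continuation device is precisely what supplies the missing pointwise control that the multiplier method cannot, and it is the ingredient you should add.
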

\begin{proof}
In Lemma~\ref{wellposedness} of Section~\ref{sec:evolution}, we have obtained $(u, u_t)^\top \in C^1([0, \infty); \mathcal{H})$. In fact, we can also derive that $(u, u_t)^\top$ is bounded in $\mathcal{H}$ in the proof of well-posedness. First, we recall the similar steps in \cite[Page 1912-1913]{CEL2002} to demonstrate the boundedness of $(u, u_t)^\top$ in $\mathcal{H}$. By (A1) and the Sobolev embedding theorem, we can see that
\[
\int_{\Omega}F_0(x, u)\mathrm{d}x=\int_{\Omega}\int_{0}^{u}f_0(x, s)\mathrm{d}s\mathrm{d}x\leqslant C_1\int_{\Omega}\left(u+u^4\right) \mathrm{d}x\leqslant C_2\|u\|^2_{H^1},
\]
which implies that 
\begin{equation}\label{bound_1}
\mathcal{E}(u)\leqslant C_3\|(u, u_t)^\top\|^2_{\mathcal{H}}.
\end{equation}
From (A2), we can deduce that for every $c_1>0$ there exist constants $N_0:=N_0(c_1)>0$ such that 
\begin{equation}\label{bound_2}
\int_{\Omega}F_0(x, u)\mathrm{d}x\geqslant -\frac{c+c_1}{2}\int_{\Omega}u^2 + C(\Omega, f_0)
\end{equation}
with $C(\Omega, f_0) =|\Omega| \min\limits_{|s|\leqslant N_0}F_0(x, s)$. The estimate~\eqref{bound_2} leads to
\begin{equation}\label{bound_3}
\begin{split}
\mathcal{E}(u) =& \frac{1}{2}\|u_t\|^2+\frac{1}{2}\|u\|^2_{H^1} + \int_{\Omega}F_0(x, u)\mathrm{d}x\\
\geqslant&\frac{1}{2}\|u_t\|^2 + \frac{1}{2}\left(1-\frac{c+c_1}{\mu_0}\right)\|u\|_{H^1}^2 + C(\Omega, f_0)\\
\geqslant&C_4\|(u, u_t)^\top\|^2_{\mathcal{H}} + C(\Omega, f_0),
\end{split}
\end{equation}
where $c_1$ is chose to be small enough and satisfies $c+c_1 < \mu_0$.
Thus, using~\eqref{bound_1} and~\eqref{bound_3}, along with the energy identity, we obtain the bounded estimate
\begin{equation*}
\begin{split}
\|(u, u_t)^\top\|^2_{\mathcal{H}}\leqslant &C_5\left\{\mathcal{E}(u) - C(\Omega, f_0)\right\}\\
\leqslant &C_6\left\{\mathcal{E}(u(0)) - C(\Omega, f_0)\right\}\\
\leqslant &C_7\left\{\|(u_0, u_1)^\top\|^2_{\mathcal{H}} - C(\Omega, f_0)\right\}.
\end{split}
\end{equation*}
Let $v=u_t$. Then we have $(v, v_t)^\top \in C([0, \infty); \mathcal{H})$, and the estimates
\begin{equation}\label{bound_inH}
\|u\|^2_{H^1(\Omega)} \leqslant C_8, \quad \|v\|^2 \leqslant C_9.
\end{equation}
Additionally, $v$ satisfies the following equation
\begin{equation}\label{eq:v_wave}
\left
    \{
        \begin{array}{l}
	v_{tt} - \Delta v +f^{\prime}_0(x, u)v = 0, \quad (t, x)\in\mathbb{R}^{+} \times \Omega,\\
	\partial_{\nu}v + v +  g^{\prime}(v)v_t = 0,  \quad (t, x)\in \mathbb{R}^{+} \times \Gamma,\\
	v|_{t=0} = u_{1}(x),  v_{t}|_{t=0} = \triangle u_{0} - f_0(x, u_0), \quad  x\in \Omega. 
        \end{array}
\right.
\end{equation}

Multiplying~\eqref{eq:v_wave} by $v_t$ and integrating over $\Omega$, we get
\begin{equation*}
    \int_{\Omega}v_{tt}v_t - \int_{\Omega}\Delta v v_t +\int_{\Omega}f^{\prime}_0(x, u)vv_t=0,
\end{equation*}
which implies
\begin{equation} \label{L2-1}
    \frac{\mathrm{d}}{\mathrm{d}t}\left\{\int_{\Omega}v_{t}^2+\int_{\Omega}|\nabla v|^2+\int_{\Gamma}v^2 +\int_{\Omega}f^{\prime}(x, u)v^2\right\}=\int_{\Omega}f_0^{\prime\prime}(x, u)v^3-2\int_{\Gamma} g^{\prime}(v)v_t^2
\end{equation}	
by integrating by parts and using the boundary condition. Here, it is necessary to point out that, for simplicity, we omit the notation $\mathrm{d}x$ and $\mathrm{d}S$ in the integrals throughout the proof.

Multiplying~\eqref{eq:v_wave} by $2\textsl{b}\cdot\nabla v$ and and integrating by parts over $\Omega$, we have
\begin{equation}\label{L2-2}
\begin{split}
    0=&2\int_{\Omega}v_{tt}(\textsl{b}\cdot\nabla v) - 2\int_{\Omega}\Delta v(\textsl{b}\cdot\nabla v) +2\int_{\Omega}f^{\prime}_0(x, u)v(\textsl{b}\cdot\nabla v)\\
      =&2\frac{\mathrm{d}}{\mathrm{d}t}\left\{\int_{\Omega}v_{t}(\textsl{b}\cdot\nabla v)\right\}-\int_{\Gamma}(\textsl{b}\cdot\nu) v_t^2+\int_{\Omega}(\nabla\cdot\textsl{b}) v_t^2-\int_{\Gamma}\left\{2(\nu\cdot\nabla v)(\textsl{b}\cdot\nabla v)-(\textsl{b}\cdot\nu)|\nabla v|^2\right\}\\
      & -\int_{\Omega}(\nabla\cdot\textsl{b})|\nabla v|^2+2 \sum_{i, j}\int_{\Omega} \frac{\partial \textsl{b}_i}{\partial x_j}\frac{\partial v}{\partial x_i}\frac{\partial v}{\partial x_j} + 2\int_{\Omega}f^{\prime}_0(x, u)v(\textsl{b}\cdot\nabla v).
\end{split}
\end{equation}
By~\eqref{qstar_Jacobi_2}, one has the estimate
\[
-\int_{\Omega}(\nabla\cdot\textsl{b})|\nabla v|^2 + 2 \sum_{i, j}\int_{\Omega} \frac{\partial \textsl{b}_i}{\partial x_j}\frac{\partial v}{\partial x_i}\frac{\partial v}{\partial x_j} \geqslant (2\varpi_2-1)\int_{\Omega}(\nabla\cdot\textsl{b})|\nabla v|^2.
\]
Then from~\eqref{L2-2} we can deduce that
\begin{equation}\label{L2-3}
\begin{split}
    -2\frac{\mathrm{d}}{\mathrm{d}t}\left\{\int_{\Omega}v_{t}(\textsl{b}\cdot\nabla v)\right\}\geqslant&\int_{\Omega}(\nabla\cdot\textsl{b}) v_t^2 -\int_{\Gamma}\left\{2(\nu\cdot\nabla v)(\textsl{b}\cdot\nabla v)+(\textsl{b}\cdot\nu)\left(v_t^2-|\nabla v|^2\right)\right\}\\
      & +(2\varpi_2-1)\int_{\Omega}(\nabla\cdot\textsl{b})|\nabla v|^2 + 2\int_{\Omega}f^{\prime}_0(x, u)v(\textsl{b}\cdot\nabla v).
\end{split}
\end{equation}

Similarly, multiplying~\eqref{eq:v_wave} by $(\nabla\cdot \textsl{b})v$ and integrating by parts over $\Omega$ we obtain
\begin{equation*}
\begin{split}
    0=& \int_{\Omega}v(\nabla\cdot\textsl{b} ) v_{tt}- \int_{\Omega}v(\nabla\cdot\textsl{b} ) \Delta v +\int_{\Omega}f^{\prime}_0(x, u)v^2(\nabla\cdot\textsl{b} )\\
    =&\frac{\mathrm{d}}{\mathrm{d}t}\int_{\Omega}v_{t}v(\nabla\cdot\textsl{b} )-\int_{\Omega}(\nabla\cdot\textsl{b} )v_{t}^2 -\int_{\Gamma}v\partial_{\nu}v(\nabla\cdot \textsl{b} )+\int_{\Omega}(\nabla\cdot \textsl{b} )|\nabla v|^2\\
    &+\frac{1}{2}\int_{\Omega}\nabla v^2 \cdot \nabla(\nabla\cdot \textsl{b} )+\int_{\Omega}f^{\prime}_0(x, u)v^2(\nabla\cdot\textsl{b} )\\
    =&\frac{\mathrm{d}}{\mathrm{d}t}\int_{\Omega}v_{t}v(\nabla\cdot\textsl{b} )-\int_{\Omega}(\nabla\cdot\textsl{b} )v_{t}^2 + \int_{\Gamma}(\nabla\cdot \textsl{b} )v(v+ g^{\prime}(v)v_t)+\int_{\Omega}(\nabla\cdot \textsl{b} )|\nabla v|^2\\
    &+\frac{1}{2}\int_{\Gamma} v^2 (\nu\cdot \nabla)(\nabla\cdot \textsl{b} ) - \frac{1}{2}\int_{\Omega} v^2\triangle(\nabla\cdot \textsl{b} ) +\int_{\Omega}f^{\prime}_0(x, u)v^2(\nabla\cdot\textsl{b} )
\end{split}
\end{equation*}
from which it follows 
\begin{equation}\label{L2-4}
\begin{split}
    &-\frac{\mathrm{d}}{\mathrm{d}t}\left\{\int_{\Omega}v_{t}v(\nabla\cdot\textsl{b} )+\int_{\Gamma} (\nabla\cdot \textsl{b} )vg(v)\right\}\\
    &= -\int_{\Omega}(\nabla\cdot\textsl{b} )v_{t}^2 + \int_{\Gamma}(\nabla\cdot \textsl{b} )v^2 - \int_{\Gamma} (\nabla\cdot \textsl{b} )v_{t}g(v)+\int_{\Omega}(\nabla\cdot \textsl{b} )|\nabla v|^2 \\
    &\quad +\frac{1}{2}\int_{\Gamma} v^2 (\nu\cdot \nabla)(\nabla\cdot \textsl{b} ) - \frac{1}{2}\int_{\Omega} v^2\triangle(\nabla\cdot \textsl{b} ) +\int_{\Omega}f^{\prime}_0(x, u)v^2(\nabla\cdot\textsl{b} ).
\end{split}
\end{equation}

Multiplying \eqref{L2-4} by $1-\varsigma$, $0<\varsigma<\min\{1, 2\varpi_2\}$ and combining it with~\eqref{L2-3} gives
\begin{equation}\label{L2-5}
\begin{split}
& \frac{\mathrm{d}}{\mathrm{d}t}\left\{2\int_{\Omega}v_{t}(\textsl{b}\cdot\nabla v) + (1-\varsigma)\int_{\Omega}v_{t}v(\nabla\cdot\textsl{b} ) + (1-\varsigma)\int_{\Gamma} (\nabla\cdot \textsl{b} )vg(v)\right\} \\
&+\varsigma\varpi_3\int_{\Omega}v_{t}^2+(1-\varsigma)\varpi_3\int_{\Gamma}v^2+(2\varpi_2-\varsigma)\varpi_3\int_{\Omega}|\nabla v|^2 \\
&\leqslant \frac{\mathrm{d}}{\mathrm{d}t}\left\{2\int_{\Omega}v_{t}(\textsl{b}\cdot\nabla v) + (1-\varsigma)\int_{\Omega}v_{t}v(\nabla\cdot\textsl{b} ) + (1-\varsigma)\int_{\Gamma} (\nabla\cdot \textsl{b} )vg(v)\right\} \\
&+\varsigma\int_{\Omega}(\nabla\cdot\textsl{b} )v_{t}^2+(1-\varsigma)\int_{\Gamma}(\nabla\cdot \textsl{b} )v^2+(2\varpi_2-\varsigma)\int_{\Omega}(\nabla\cdot\textsl{b})|\nabla v|^2 \\
 &\leqslant  (1-\varsigma)\int_{\Gamma} (\nabla\cdot \textsl{b} )v_{t}g(v) -\frac{1-\varsigma}{2}\int_{\Gamma} v^2 (\nu\cdot \nabla)(\nabla\cdot \textsl{b} )+ \frac{1-\varsigma}{2}\int_{\Omega} v^2\triangle(\nabla\cdot \textsl{b} ) \\
 &\quad+\int_{\Gamma}\left\{2(\nu\cdot\nabla v)(\textsl{b}\cdot\nabla v)+(\textsl{b}\cdot\nu)\left(v_t^2-|\nabla v|^2\right)\right\}-2\int_{\Omega}f^{\prime}_0(x, u)v(\textsl{b}\cdot\nabla v)\\
 &\quad - (1-\varsigma)\int_{\Omega}f^{\prime}_0(x, u)v^2(\nabla\cdot\textsl{b} ).
\end{split}
\end{equation}
Here, the first inequality is derived by using $\nabla\cdot\textsl{b} > \varpi_3$.

Multiplying \eqref{L2-5} by $\frac{1}{N_1}$ and adding it by~\eqref{L2-1} yields
\begin{equation}\label{L2-6}
\frac{\mathrm{d}}{\mathrm{d}t}\mathcal{J}_0(t) + \mathcal{J}_2(t) \leqslant \sum_{i=3}^7\mathcal{J}_i(t)
\end{equation}
with $\mathcal{J}_0(t):=\int_{\Omega}v_{t}^2+\int_{\Omega}|\nabla v|^2+\int_{\Gamma}v^2+\int_{\Omega}f^{\prime}(x, u)v^2 + \mathcal{J}_{1}(t)$, 
\begin{equation*}
\begin{split}
	\mathcal{J}_1(t):=&\frac{2}{N_1}\int_{\Omega}v_{t}(\textsl{b}\cdot\nabla v) + \frac{1-\varsigma}{N_1}\int_{\Omega}v_{t}v(\nabla\cdot\textsl{b} ) + \frac{1-\varsigma}{N_1}\int_{\Gamma} (\nabla\cdot \textsl{b} )vg(v),\\
	\mathcal{J}_2(t):=&\frac{\varsigma\varpi_3}{N_1}\int_{\Omega}v_{t}^2+\frac{(1-\varsigma)\varpi_3}{N_1}\int_{\Gamma}v^2+\frac{(2\varpi_2-\varsigma)\varpi_3}{N_1}\int_{\Omega}|\nabla v|^2,\\
	\mathcal{J}_3(t):=&\frac{1-\varsigma}{2N_1}\left(\int_{\Gamma}2 (\nabla\cdot \textsl{b} )v_{t}g(v) -\int_{\Gamma} v^2 (\nu\cdot \nabla)(\nabla\cdot \textsl{b} )+ \int_{\Omega} v^2\triangle(\nabla\cdot \textsl{b} )\right)\\
 & + \frac{1}{N_1}\int_{\Gamma}(\textsl{b}\cdot\nu)v_t^2-2\int_{\Gamma} g^{\prime}(v)v_t^2,\\
 	\mathcal{J}_4(t):=&\frac{1}{N_1}\int_{\Gamma}\left\{2(\nu\cdot\nabla v)(\textsl{b}\cdot\nabla v)-(\textsl{b}\cdot\nu)|\nabla v|^2\right\},\quad \mathcal{J}_5(t):=-\frac{2}{N_1}\int_{\Omega}f^{\prime}_0(x, u)v(\textsl{b}\cdot\nabla v),\\
	\mathcal{J}_6(t):=&- \frac{1-\varsigma}{N_1}\int_{\Omega}f^{\prime}_0(x, u)v^2(\nabla\cdot\textsl{b} ),\quad \mathcal{J}_7(t):=\int_{\Omega}f_0^{\prime\prime}(x, u)v^3.
\end{split}
\end{equation*}
We are about to estimate $\mathcal{J}_i$, $i=3, 4, \cdots, 7$, one by one. For $\mathcal{J}_3$, we can deduce from~(A3),~\eqref{qstar_estimate},~\eqref{bound_inH}, and Young's inequality that
\begin{equation*}
\begin{split}
\mathcal{J}_3(t)\leqslant& \frac{(1-\varsigma)\chi}{2N_1}\int_{\Gamma}|g(v)|^2 -(2m_1-\frac{\chi}{N_1}-\frac{(1-\varsigma)\chi}{2N_1})\int_{\Gamma} v_t^2+ \frac{(1-\varsigma)\chi}{2N_1}C_9.
\end{split}
\end{equation*}
By (A3), \eqref{qstar_sign_2}, \eqref{qstar_estimate} and Young's inequality, we have
\begin{equation*}
\begin{split}
\mathcal{J}_4(t)\leqslant&\frac{1}{N_1}\int_{\Gamma}\left\{-2( v + g^{\prime}(v)v_t)(\textsl{b}\cdot\nabla v) - \varpi_2|\nabla v|^2\right\}\\
\leqslant&\frac{1}{N_1}\int_{\Gamma}\left\{2\chi\left( |v||\nabla v| + m_{2}|v_t||\nabla v| \right)- \varpi_2|\nabla v|^2\right\}\\
\leqslant&\frac{1}{N_1}\int_{\Gamma}\left\{2\chi\left( \frac{N_2|v|^2}{2}+\frac{|\nabla v|^2}{2N_2} + m_{2}\frac{N_2|v_t|^2}{2}+m_{2}\frac{|\nabla v|^2}{2N_2} \right)- \varpi_2|\nabla v|^2\right\}\\
\leqslant&\frac{1}{N_1}\int_{\Gamma}\left\{\chi N_2\left( |v|^2+ m_{2}|v_t|^2 \right)- \left(\varpi_2 -\frac{\chi}{N_2}(1+m_2) \right)|\nabla v|^2\right\}
\end{split}
\end{equation*}
with $N_2=N_1^{\frac{1}{2}}$.
Thanks to (A1), \eqref{qstar_estimate}, \eqref{bound_inH}, Young's inequality, H\"older's inequality and the Sobolev embedding theorem, we have
\begin{equation*}
\begin{split}
\mathcal{J}_5(t)\leqslant& \frac{2\chi C_{10}}{N_1}\int_{\Omega}(1+u^2)|v||\nabla v|\\
\leqslant& \frac{\chi C_{10}}{N_1}\int_{\Omega}\left\{N_3v^2 + \frac{1}{N_3}|\nabla v|^2\right\} +\frac{2\chi C_{10}}{N_1}\left(\int_{\Omega}u^6\right)^{\frac{1}{3}}\left(\int_{\Omega}|v|^6\right)^{\frac{1}{6}}\left(\int_{\Omega}|\nabla v|^2\right)^{\frac{1}{2}}\\
\leqslant& \frac{\chi C_{10}}{N_1}\int_{\Omega}\left\{N_3v^2 + \frac{1}{N_3}|\nabla v|^2\right\} +\frac{2\chi C_{11}}{N_1}\left(\int_{\Omega}|\nabla u|^2\right)\left(\int_{\Omega}|\nabla v|^2\right)\\
\leqslant& \frac{\chi C_9 C_{10} N_3}{N_1} +\frac{\chi C_8 C_{11} N_4}{N_1} + \frac{\chi}{N_1}\left(\frac{C_{10}}{N_3}+\frac{C_{11}}{N_4}\right)\int_{\Omega}|\nabla v|^2
\end{split}
\end{equation*}
for some $N_3$, $N_4>0$,
and
\begin{equation*}
\begin{split}
\mathcal{J}_6(t)\leqslant & \frac{C_{12}(1-\varsigma)\chi }{N_1}\int_{\Omega}(1+u^2)v^2\\
\leqslant&\frac{ C_{12}(1-\varsigma)\chi}{N_1}\left\{\int_{\Omega}v^2+ \left(\int_{\Omega}u^6\right)^{\frac{1}{3}}\left(\int_{\Omega}v^3\right)^{\frac{2}{3}}\right\}\\
\leqslant&\frac{ C_{13}(1-\varsigma)\chi}{N_1}\left\{C_9+ C_8\left(\int_{\Omega}v^3\right)^{\frac{2}{3}}\right\}.
\end{split}
\end{equation*}
Moreover, by the Gagliardo--Nirenberg inequality (cf. \cite[Page 11]{nirenberg1959elliptic})
\[
\|v\|_{L^3(\Omega)}\leqslant C_{14}\left(\|\nabla v\|^{\frac{1}{2}}\| v\|^{\frac{1}{2}}+\|v\|\right),
\]
we get
\begin{equation*}
\begin{split}
\mathcal{J}_6(t)\leqslant &\frac{ C_{15}(1-\varsigma)\chi}{N_1}\left\{\left[1+ \left(\frac{N_5}{2}+1\right)C_8\right]C_9 +\frac{C_8}{2N_5} \int_{\Omega}|\nabla v|^2\right\}.
\end{split}
\end{equation*}
From (A1), \eqref{bound_inH}, H\"older's inequality, the Sobolev embedding theorem and the Gagliardo--Nirenberg inequalities
\[
\|v\|_{L^3(\Omega)}\leqslant C_{14}\left(\|\nabla v\|^{\frac{1}{2}}\| v\|^{\frac{1}{2}}+\|v\|\right), \quad \|v\|_{L^4(\Omega)}\leqslant C_{16}\left(\|\nabla v\|^{\frac{1}{4}}\| v\|^{\frac{3}{4}}+\|v\|\right),
\]
it follows
\begin{equation*}
\begin{split}
\mathcal{J}_7(t)\leqslant &C\int_{\Omega}(1+ |u|)|v|^3 \leqslant \frac{C}{2}\int_{\Omega}\left(|v|^3+ |u|^2|v|^2 + |v|^4\right)\\
\leqslant &\frac{C}{2}\left\{\int_{\Omega}|v|^3 +\left( \int_{\Omega}u^6\right)^{\frac{1}{3}}\left( \int_{\Omega}v^3\right)^{\frac{2}{3}}+\int_{\Omega}v^4\right\}\\
\leqslant &C_{17}\Bigg\{\left(\int_{\Omega}|\nabla v|^2\int_{\Omega}| v|^2\right)^{\frac{3}{4}} +\left(\int_{\Omega}|v|^2\right)^{\frac{3}{2}}+\left(\int_{\Omega}|\nabla v|^2\int_{\Omega}| v|^2\right)^{\frac{1}{2}} +\int_{\Omega}|v|^2\\
&+\left(\int_{\Omega}|\nabla v|^2\right)^{\frac{1}{2}}\left(\int_{\Omega}| v|^2\right)^{\frac{3}{2}} +\left(\int_{\Omega}|v|^2\right)^{2}\Bigg\}\\
\leqslant &\frac{C_{17}}{N_6}\int_{\Omega}|\nabla v|^2+C_{18}.
\end{split}
\end{equation*}
By these estimates of $\mathcal{J}_i$, $i=3, 4, \cdots, 7$, we can see that
\begin{equation}\label{L2-7}
\begin{split}
 \sum_{i=3}^7\mathcal{J}_i(t)\leqslant &\frac{\chi}{N_1}\int_{\Gamma}\left\{N_2 v^2+\frac{1-\varsigma}{2}|g(v)|^2\right\} -\left(2m_1-\frac{\chi}{N_1}-\frac{(1-\varsigma)\chi}{2N_1}- \frac{ N_2\chi m_2}{N_1}\right)\int_{\Gamma} v_t^2\\
&+ \frac{\chi C_9}{N_1}\left( \frac{1-\varsigma}{2}+C_{10} N_3 + C_{15}(1-\varsigma)\right)-\frac{1}{N_1} \left(\varpi_2 -\frac{\chi}{N_2}(1+m_2) \right)\int_{\Gamma}|\nabla v|^2\\
& + \left[\frac{\chi}{N_1}\left(\frac{C_{10}}{N_3}+\frac{C_{11}}{N_4} + \frac{C_8 C_{15}(1-\varsigma)}{2N_5}\right) + \frac{C_{17}}{N_6}\right]\int_{\Omega}|\nabla v|^2\\
&+\frac{\chi C_8 }{N_1}\left(\frac{C_{15}(1-\varsigma)C_9(N_5+2)}{2} + C_{11} N_4\right)+C_{18}.
\end{split}
\end{equation}
Choosing $N_1$ to be large enough, we get
\[
2m_1-\frac{\chi}{N_1}-\frac{(1-\varsigma)\chi}{2N_1}- \frac{ N_2\chi m_2}{N_1}>0, \quad \varpi_2 -\frac{\chi}{N_2}(1+m_2)>0.
\]
Set
\[
C_{19}:=\frac{\chi C_9}{N_1}\left( \frac{1-\varsigma}{2}+C_{10} N_3 + C_{15}(1-\varsigma)\right) +\frac{\chi C_8 }{N_1}\left(\frac{C_{15}(1-\varsigma)C_9(N_5+2)}{2} + C_{11} N_4\right) +C_{18}.
\]
Then from~\eqref{L2-7} one has
 \begin{equation}\label{L2-8}
 \sum_{i=3}^7\mathcal{J}_i(t)\leqslant \frac{C_{20}}{\sqrt{N_1}}\int_{\Gamma}\left\{v^2+|g(v)|^2\right\} +C_{12} + \mathcal{J}_{8}(t)
 \end{equation}
 with
 \begin{equation*}
\mathcal{J}_{8}(t) : =\left[\frac{\chi}{N_1}\left(\frac{C_{10}}{N_3}+\frac{C_{11}}{N_4} + \frac{C_8 C_{15}(1-\varsigma)}{2N_5}\right) + \frac{C_{17}}{N_6}\right]\int_{\Omega}|\nabla v|^2.
\end{equation*}
Combing~\eqref{L2-6} and ~\eqref{L2-8} yields
\begin{equation}\label{L2-6_2}
\frac{\mathrm{d}}{\mathrm{d}t}\mathcal{J}_0(t) + \mathcal{J}_2(t)-\mathcal{J}_{8}(t)\leqslant \frac{C_{20}}{\sqrt{N_1}}\int_{\Gamma}\left\{v^2+|g(v)|^2\right\} +C_{19}.
\end{equation}

Let
\[
\frac{C_{21}}{N_1}:=\frac{(2\varpi_2-\varsigma)\varpi_3}{N_1}-\left[\frac{\chi}{N_1}\left(\frac{C_{10}}{N_3}+\frac{C_{11}}{N_4} + \frac{C_8 C_{15}(1-\varsigma)}{2N_5}\right) + \frac{C_{17}}{N_6}\right] .
\]
We have the following equality
\begin{equation}\label{L2-9}
\mathcal{J}_2(t)-\mathcal{J}_{8}(t)=\frac{\varsigma\varpi_3}{N_1}\int_{\Omega}v_{t}^2+\frac{(1-\varsigma)\varpi_3}{N_1}\int_{\Gamma}v^2+\frac{C_{21}}{N_1}\int_{\Omega}|\nabla v|^2.
\end{equation}
Note that
\begin{equation}\label{L2-10}
\begin{split}
\int_{\Omega}f^{\prime}(x, u)v^2 + \mathcal{J}_{1}(t)\leqslant &\left[1+ \left(\frac{N_5}{2}+1\right)C_8\right]C_9 +\left(\frac{C_8}{2N_5} + \frac{\chi}{N_1}\right)\int_{\Omega}|\nabla v|^2\\
&+ \frac{(3-\varsigma)\chi}{2N_1}\int_{\Omega}v_{t}^2 + \frac{(1-\varsigma)\chi}{2N_1}\int_{\Omega}|v|^2 + \frac{(1-\varsigma)\chi}{N_1}\int_{\Gamma} vg(v).
\end{split}
\end{equation}
Choosing $\ell_1< \min\{\varsigma\varpi_3, (1-\varsigma)\varpi_3, C_{21}\}$, from~\eqref{bound_inH}, ~\eqref{L2-9} and~\eqref{L2-10} we can deduce that
\begin{equation}\label{L2-11}
\begin{split}
&\mathcal{J}_2(t)-\mathcal{J}_{8}(t)\\
&=\mathcal{J}_2(t)-\mathcal{J}_{8}(t)+ \frac{\ell_1}{N_1}\left(\int_{\Omega}f^{\prime}(x, u)v^2 + \mathcal{J}_{1}(t)\right)-\frac{\ell_1}{N_1}\left(\int_{\Omega}f^{\prime}(x, u)v^2 + \mathcal{J}_{1}(t)\right)\\
&\geqslant\left(\frac{\varsigma\varpi_3}{N_1}-\frac{(3-\varsigma)\chi\ell_1}{2N^2_1}\right)\int_{\Omega}v_{t}^2 +\frac{(1-\varsigma)\varpi_3}{N_1}\int_{\Gamma}v^2+\left(\frac{C_{21}}{N_1}-\frac{\ell_1}{N_1}\left(\frac{C_8}{2N_5} + \frac{\chi}{N_1}\right)\right)\int_{\Omega}|\nabla v|^2\\
&\quad+ \frac{\ell_1}{N_1}\left(\int_{\Omega}f^{\prime}(x, u)v^2 + \mathcal{J}_{1}(t)\right) -\frac{(1-\varsigma)\chi}{N_1}\int_{\Gamma} vg(v)- C_{22}
\end{split}
\end{equation}
with
\[
C_{22}:=\left[1+ \left(\frac{N_5}{2}+1\right)C_8+\frac{(1-\varsigma)\chi}{2N_1}\right]C_9.
\]
By setting $N_3$, $N_4$, $N_5$, $N_6$ sufficiently large, and choosing $\ell_1$ small enough, one has
\[
\frac{\varsigma\varpi_3}{N_1}-\frac{(3-\varsigma)\chi\ell_1}{2N^2_1}> \frac{\ell_1}{N_1},\quad \frac{C_{21}}{N_1}-\frac{\ell_1}{N_1}\left(\frac{C_8}{2N_5} + \frac{\chi}{N_1}\right)> \frac{\ell_1}{N_1}.
\]
Then by~\eqref{L2-11} we get
\begin{equation} \label{L2-12}
\mathcal{J}_2(t)-\mathcal{J}_{8}(t) \geqslant \frac{\ell_1}{N_1}\mathcal{J}_0(t)-\frac{(1-\varsigma)\chi}{N_1}\int_{\Gamma} vg(v)- C_{22}.
\end{equation}

Since (A3) implies that
\[
vg(v)\geqslant C_{23}\left\{v^2+|g(v)|^2\right\},
\]
then from~\eqref{L2-6_2} and~\eqref{L2-12} we can deduce that
\begin{equation} \label{L2-13}
\frac{\mathrm{d}}{\mathrm{d}t}\mathcal{J}_0(t) +  \frac{\ell_1}{N_1}\mathcal{J}_{0}(t)\leqslant \frac{C_{24}}{\sqrt{N_1}}\int_{\Gamma}vg(v) +C_{25}.
\end{equation}
Due to the energy identity, we have
\[
	\sup_{t\geqslant 0}\int_{t}^{t+1}\left(\int_{\Gamma}v g(v)\right) = \sup_{t\geqslant 0}\left\{\mathcal{E}(t)- \mathcal{E}(t+1)\right\} \leqslant C_{26}.
\]
Then it follows form a Gronwall-type inequality (see \cite[Lemma 2.2]{grasselli2004asymptotic}) that $\mathcal{J}_0(t)$ is bounded, that is, there exists a constant $C_{27}>0$, dependent on $\|(u_0, u_1)^{\top}\|_{\mathcal{D}}$, such that $\mathcal{J}_0(t)\leqslant C_{27}$.

Based on the previous estimates, we notice that there exists a positive constant $\ell_2$ such that
\begin{equation*}
	\|v_t\|^2 + \|\nabla v\|^2 + \|v\|_{L^2({\Gamma})} \leqslant \ell_2\mathcal{J}_0(t)\leqslant \ell_2 C_{27},
\end{equation*}
that is,
\begin{equation}\label{L2-14}
	\|u_{tt}\|^2 + \|\nabla u_t\|^2 + \|u_t\|_{L^2({\Gamma})} \leqslant \ell_2\mathcal{J}_0(t)\leqslant \ell_2 C_{27}.
\end{equation}
Let us consider the following elliptic boundary value problem
\begin{equation*}
\left
    \{
        \begin{array}{ll}
	\Delta u=u_{tt}-f_0(x, u) &\textrm{in $ \Omega$,} \\
	\frac{\partial u}{\partial n}+u=-g(u_t) &\textrm{on $ \Gamma$.}
        \end{array}
\right.
\end{equation*}
By the regularity theory for elliptic problem and~\eqref{L2-14} we obtain
\[
\|u\|_{H^2}\leqslant C_{21}\left\{\|u_{tt}+ \|f_0(x, u)\| +\|g(u_t)\|\right\}.
\]
Therefore, we complete the proof of the boundedness of the solution $\|(u, u_t)^{\top}$ in $\mathcal{D}$.  

To prove the estimate~\eqref{important_inequal}, we employ the method of contradiction. Assume that $\{v_{l}\}$ is a sequence of solutions to equation~\eqref{eq:v_wave} satisfying
	\begin{equation}\label{L2-15}
		\lim_{l\rightarrow +\infty}\frac{\| v_l\|+\| v_{lt}\|}{\|v_{lt}\|^{2}_{L^2(\Gamma)}+\|v_{l}\|^{2}_{L^2(\Gamma)}}=+\infty.
	\end{equation}
Denote $\mathcal{S} = (0, T) \times \Gamma$ and $\mathcal{O} = (0, T)\times \Omega$. From \eqref{L2-14} it deduces that for $T>0$ one has
\begin{itemize} 
\item[-] $v_l \rightarrow v$ weakly in $H^{1}(\mathcal{O})$ and weak-$\ast$ in $L^{\infty}([0, T]; H^{1}(\Omega))$; 
\item[-] $v_l \rightarrow v$ strongly in $L^{2}(\mathcal{S})\cap L^{2}(\mathcal{O})$; 
\item[-] $v_{lt} \rightarrow v_t$ weakly in $L^{2}(\mathcal{O})$.
\end{itemize}
Furthermore, by an Aubin's type compactness lemma as in \cite[Page 65]{simon1986compact}, we also have
	\begin{itemize}
		\item[-] $v_l \rightarrow v$ strongly in $L^{\infty}([0, T]; H^{1-\sigma}(\tau))$ for some $0 < \tau <1 $.
	\end{itemize}

We will argue by contradiction from the following two cases:
\begin{itemize}
\item
Case 1. Assume that $v\neq 0$.
	It follows from~\eqref{eq:v_wave} and~\eqref{L2-15} that
	\begin{itemize}
		\item $v_l$, $v_{lt} \rightarrow 0$ strongly in $L^{\infty}([0, T]; L^{2}(\Gamma))$; 
		\item $f^{\prime}_0(x, u_l)v_l \rightarrow f^{\prime}_0(x, u)v$ strongly in $L^{\infty}([0, T]; L^{2}(\Omega))$.
	\end{itemize}
	Passing to the limit in the equation~\eqref{eq:v_wave} gives
	\begin{equation}
	\left
		\{
        			\begin{array}{ll}
            			v_{tt} - \Delta v +f^{\prime}_0(x, u)v= 0 &\textrm{in $\mathcal{O}$,}  \\
                	 		\partial_{\nu}v = v = v_t= 0 &\textrm{on $\mathcal{S}$}.
       			 \end{array}
	\right.
	\end{equation}
Thanks to $v\in L^{\infty}([0, T]; H^{1-\tau}(\Omega))$, we get $v\in L^{6}(\mathcal{O})$ and $f^{\prime}_0(x, u)\in L^{3}(\mathcal{O})$. Hence, for $T$ large enough, by Theorem 1 in~\cite{ruiz1992unique} we obtain $v= 0$, which contradicts our assumption.
\item
Case 2. Assume that $v=0$.
Let $V_l := \| v_l \|_{L^{2}(\mathcal{O})}$. In terms of the fact that $\| v_l \| \rightarrow 0$ strongly, it is clear that $V_l$ approach to zero as $l$ goes to infinity. 
By set $\tilde{v}_{l} := \frac{v_l}{V_l }$, we can see that $\| \tilde{v}_l \|_{L^{2}(\mathcal{O})}= 1$, and $\tilde{v}_l$ satisfies
\begin{equation}
\left
    \{
        \begin{array}{ll}
            \tilde{v_l}_{tt} - \Delta \tilde{v_l} +f^{\prime}_0(x, u)\tilde{v_l}= 0 & \textrm{in $\mathcal{O}$,}  \\
                 \partial_{\nu}\tilde{v_l} + \tilde{v_l} +\tilde{v_l}_t= 0 &\textrm{on $\mathcal{S}$.}
        \end{array}
\right. \label{L2-16}
\end{equation}
With the same way to prove~\eqref{L2-14}, we can also obtain
\begin{equation}
	\|\tilde{v_{lt}}\| +\|\nabla \tilde{v_l}\| +\|\tilde{v_l}\|_{L^2(\Gamma)}^{2} \leq C_{28}, \label{L2-17}
\end{equation}
which implies
	\begin{itemize}
		\item $\tilde{v}_l \rightarrow \tilde{v}$ weakly in $H^{1}(\mathcal{O})$ and weak-$\ast$ in $L^{\infty}([0, T]; H^{1}(\Omega))$;
		\item $\tilde{v}_l \rightarrow \tilde{v}$ strongly in $L^{2}(\mathcal{S})\cap L^{2}(\mathcal{O})$;
		\item $\tilde{v}_{lt} \rightarrow \tilde{v}_t$ weakly in $L^{2}(\mathcal{O})$.
	\end{itemize}
Here, we know $\| \tilde{v} \|_{L^{2}(\mathcal{O})}=1$. By the Aubin's compactness lemma as above, we have
\begin{itemize}
	\item $\tilde{v}_l \rightarrow \tilde{v}$ strongly in $L^{\infty}([0, T]; H^{1-\tilde{\tau}}(\Omega))$ for some $0 < \tilde{\tau} <1 $.
\end{itemize}
Also from \eqref{L2-15}, we get
	\begin{equation}
		\lim_{l\rightarrow +\infty}\frac{\| \tilde{v}_l\|+\| \tilde{v}_{lt}\|}{\|\tilde{v}_{lt}\|^{2}_{L^2(\Gamma)}+\|\tilde{v}_{l}\|^{2}_{L^2(\Gamma)}}=+\infty.  \label{L2-18}
	\end{equation}
	Since $\| \tilde{v}_l\|_{L^2(\Omega)}^{2}$ is almost everywhere bounded by~\eqref{L2-17}, then by~\eqref{L2-18} we have
	\begin{itemize}
		\item $\tilde{v}_l$, $\tilde{v}_{lt} \rightarrow 0$ strongly in $L^{\infty}([0, T]; L^{2}(\Gamma))$.
	\end{itemize}
Let $l $ goes to infinity, \eqref{L2-16}) becomes the following form
\begin{equation*}
\left
    \{
        \begin{array}{ll}
            \tilde{v}_{tt} - \Delta \tilde{v} +f'(x, u)\tilde{v}= 0 &\textrm{in $\mathcal{O}$,}  \\
                 \partial_{\nu}\tilde{v} =\tilde{v} =\tilde{v}_t= 0 &\textrm{on $\mathcal{S}$.}
        \end{array}
\right.
\end{equation*}
Similarly, we get $\tilde{v}=0$ which contradicts $\| \tilde{v} \|_{L^{2}(\mathcal{O})} =1$. 
\end{itemize}
Thus, the proof of~\eqref{important_inequal} is completed. 
\end{proof}

\subsection{Convergence to equilibria}

Now, we present the statement of our convergence result and provide a detailed proof of the following theorem.

\begin{theorem}\label{thm:main2}
Suppose that the assumptions {\rm(A1)}-{\rm(A4)} hold. If $f_0$ is a {\L}ojasiewicz-type function, the classical solution $(u, u_t)^\top$ of~\eqref{wave_eq2} converges to $(\varphi, 0)^\top$ in $\mathcal{H}$, where $\varphi$ belongs to $\mathcal{N}$.
\end{theorem}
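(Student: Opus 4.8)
The plan is to run a {\L}ojasiewicz--Simon argument in which the observability inequality~\eqref{important_inequal} of Lemma~\ref{solu:boundedness} is used to transfer the boundary dissipation to the interior. First I would settle compactness and identify the limit set. By Lemma~\ref{solu:boundedness} the orbit $\{(u(t),u_t(t))^\top:t\ge 0\}$ is bounded in $\mathcal{D}$, which embeds compactly into $\mathcal{H}$; hence the orbit is precompact and $\omega(u)$ is nonempty, compact, connected and invariant in $\mathcal{H}$. By the energy identity of Lemma~\ref{wellposedness}, $t\mapsto\mathcal{E}(u(t))$ is nonincreasing and, by~\eqref{bound_3}, bounded below, so it tends to a limit $\mathcal{E}_\infty$. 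Since $\mathcal{E}$ is constant on the invariant set $\omega(u)$, the dissipation $\int_\Gamma g(u_t)u_t$ vanishes identically along any orbit lying in $\omega(u)$; by (A3) such an orbit has $u_t|_\Gamma\equiv0$, and then the unique-continuation argument already used to prove~\eqref{important_inequal} forces $u_t\equiv0$. Thus every element of $\omega(u)$ has the form $(\varphi,0)^\top$ with $\varphi\in\mathcal{N}$, and necessarily $E(\varphi)=\mathcal{E}_\infty$.

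The core step is to read the {\L}ojasiewicz--Simon inequality~\eqref{lojasiewicz} along the trajectory. Since $-\Delta u+f_0(x,u)=-u_{tt}$ in $\Omega$ and $\partial_\nu u+u=-g(u_t)$ on $\Gamma$, the left-hand side of~\eqref{lojasiewicz} equals $\|u_{tt}\|+\|g(u_t)\|_{L^2(\Gamma)}$. Bounding $\|u_{tt}\|$ by~\eqref{important_inequal} and $\|g(u_t)\|_{L^2(\Gamma)}\le m_2\|u_t\|_{L^2(\Gamma)}$ by (A3), I obtain
\begin{equation*}
|E(u)-E(\varphi)|^{1-\eta}\le C\bigl(\|u_t\|_{L^2(\Gamma)}+\|u_{tt}\|_{L^2(\Gamma)}\bigr)
\end{equation*}
whenever $u$ is $H^1$-close to $\varphi$. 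Writing $H(t):=\mathcal{E}(u(t))-\mathcal{E}_\infty\ge0$ and using $\mathcal{E}(u)-\mathcal{E}_\infty=(E(u)-E(\varphi))+\tfrac12\|u_t\|^2$ together with $\eta\le\tfrac12$ (so that $2(1-\eta)\ge1$ lets~\eqref{important_inequal} absorb the kinetic term), this upgrades to $H(t)^{1-\eta}\le C(\|u_t\|_{L^2(\Gamma)}+\|u_{tt}\|_{L^2(\Gamma)})$.

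Next I would build a combined Lyapunov functional $\mathcal{L}(t)$, assembled from $H(t)$, the second-order energy of $v=u_t$ (identity~\eqref{L2-1}) and the multiplier correctors of Lemma~\ref{solu:boundedness}, so that $\mathcal{L}\sim H$ near the limit while its dissipation controls the full boundary pair, $-\tfrac{\mathrm{d}}{\mathrm{d}t}\mathcal{L}(t)\ge c(\|u_t\|_{L^2(\Gamma)}^2+\|u_{tt}\|_{L^2(\Gamma)}^2)$. Combining this with the preceding bound and the elementary inequality $(a^2+b^2)/(a+b)\ge\tfrac12(a+b)$ for $a,b\ge0$ yields
\begin{equation*}
-\frac{\mathrm{d}}{\mathrm{d}t}\mathcal{L}(t)^{\eta}\ge c\bigl(\|u_t\|_{L^2(\Gamma)}+\|u_{tt}\|_{L^2(\Gamma)}\bigr),
\end{equation*}
whence $\int_T^\infty(\|u_t\|_{L^2(\Gamma)}+\|u_{tt}\|_{L^2(\Gamma)})\,\mathrm{d}t<\infty$ and, by~\eqref{important_inequal}, $\int_T^\infty\|u_t\|\,\mathrm{d}t<\infty$. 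A standard continuation argument (the finite length of the trajectory prevents it from leaving a small $H^1$-ball around $\varphi$ once it has entered) then shows $u(t)$ is Cauchy in $L^2(\Omega)$, hence converges to $\varphi$, and precompactness upgrades this to convergence in $H^1(\Omega)$; with $u_t\to0$ this is the claimed convergence $(u,u_t)^\top\to(\varphi,0)^\top$ in $\mathcal{H}$. Integrating the differential inequality for $\mathcal{L}^\eta$ finally gives the rate: exponential when $\eta=\tfrac12$ and algebraic of order $t^{-\eta/(1-2\eta)}$ when $\eta\in(0,\tfrac12)$.

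The main obstacle is the construction and estimation of $\mathcal{L}$. Because the damping acts only on $\Gamma$, the dissipation $-\mathcal{L}'$ must be forced into the pure boundary quantities $\|u_t\|_{L^2(\Gamma)}^2+\|u_{tt}\|_{L^2(\Gamma)}^2$; recovering the interior norms (in particular $\|\nabla u_t\|$, which occurs in the second-order energy but is absent from~\eqref{important_inequal}) requires the same multiplier identities used for $\mathcal{J}_0$ in Lemma~\ref{solu:boundedness}, where the uncontrolled gradient is re-expressed through $\|u_{tt}\|$, boundary terms, and total time-derivatives that integrate to finite quantities. Keeping $\mathcal{L}$ comparable to $H$ with sign-definite derivative then hinges on absorbing the cubic remainder $\int_\Omega f_0''(x,u)u_t^3$ and the indefinite term $\int_\Omega f_0'(x,u)u_t^2$ by means of (A1)--(A2), the Sobolev and Gagliardo--Nirenberg inequalities, and the already established $H^1$-boundedness of $u_t$, exactly in the spirit of the estimates of $\mathcal{J}_5$, $\mathcal{J}_6$ and $\mathcal{J}_7$ there.
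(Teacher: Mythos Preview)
Your plan is essentially the same {\L}ojasiewicz--Simon scheme as the paper's, and the main ingredients you list (precompactness from Lemma~\ref{solu:boundedness}, characterization of $\omega(u)$ as equilibria, a modified Lyapunov functional built from the energy plus second-order and corrector terms, the differential inequality $-\tfrac{\mathrm{d}}{\mathrm{d}t}\mathcal{L}^{\eta}\gtrsim\|u_t\|$, and the standard open--closed continuation to stay inside the {\L}ojasiewicz neighbourhood) are all present in the paper's proof.

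Where your organisation diverges from the paper is in insisting that \emph{only} the boundary pair $\|u_t\|_{L^2(\Gamma)}+\|u_{tt}\|_{L^2(\Gamma)}$ appear on both sides of the key inequality. The paper does not do this, and this is precisely what lets it avoid the obstacle you flag in your last paragraph. Its functional $\mathcal{V}(t)$ is built so that the dissipation $-\mathcal{V}'$ controls the full collection
\[
\|u_t\|+\|u_t\|_{L^2(\Gamma)}+\|g(u_t)\|_{L^2(\Gamma)}+\|\nabla u_t\|+\|-\Delta u+f_0(x,u)\|
\]
directly (the interior gradient $\|\nabla u_t\|^2$ enters through the corrector $-\epsilon_3(\nabla u_t,\nabla u)$, and $\|-\Delta u+f_0\|^2$ through $\epsilon_2(-\Delta u+f_0,u_t)$), and then shows that $|\mathcal{V}(t)-E(\varphi)|^{1-\eta}$ is bounded by \emph{this same collection}, using~\eqref{lojasiewicz} for the $|E(u)-E(\varphi)|^{1-\eta}$ piece and $2(1-\eta)\ge1$ plus the $\mathcal{D}$-bound of Lemma~\ref{solu:boundedness} for the remaining higher-order pieces. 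The ratio in $-\tfrac{\mathrm{d}}{\mathrm{d}t}(\mathcal{V}-E(\varphi))^{\eta}$ then cancels cleanly without ever needing to recover $\|\nabla u_t\|$ from boundary data. Your boundary-only route would work in principle, but it forces you to re-express $\|\nabla u_t\|$ via the multiplier machinery a second time; the paper's choice of matching the dissipated quantities with those in the {\L}ojasiewicz bound is the more economical way to close the argument.
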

\begin{proof}
We define the $\omega$-limit set of solution $(u_0, u_1)^\top\in\mathcal{H}$ by
\[
	\omega(u_0, u_1) = \{ (\varphi, \phi)^\top\in \mathcal{H}: \textrm{$\exists$ $t_{n}\rightarrow \infty$, such that $(u(t_{n}, x),  u_{t}(t_{n}, x))\xrightarrow{\mathcal{H}}(\varphi, \phi)$}\}.
\]
From~\cite[Theorem 5.2]{CEL2002} it follows that for any $(u_{0}, u_{1})^\top \in \mathcal{D}$, $\omega(u_{0}, u_{1})$ is a nonempty compact connected subset of $\mathcal{H}$, and also invariant with respect to the nonlinear semigroup defined by the solution. Moreover, $\omega(u_{0}, u_{1})$ consists of equilibria, and every element in $\omega(u_{0}, u_{1})$ has the form $(\varphi, 0)^\top$, where $\varphi$ is a solution to problem~\eqref{eq_stationary}, and
\begin{equation}
	\|u_t\|\rightarrow 0, \quad t\rightarrow \infty. \label{thm5_1}
\end{equation}

The following part of proof is about to show that $u$ converges to $\varphi$ in $H^1(\Omega)$, which consists of two steps.

\textbf{Step 1}. Let $\epsilon_{i}$, $i=1$, $2$, $3$, $4$, be small positive real numbers satisfying $\epsilon_1>\epsilon_3>\epsilon_2>\epsilon_4$. For the classical solution $(u, u_t)^\top\in\mathcal{D}$, we introduce an auxiliary function as follow
\begin{equation*}
\begin{split}
	\mathcal{V}(t) =& \mathcal{E}(u) + \epsilon_1\left\{\|\nabla u_t\|^{2} +\|-\Delta u+f_0(x, u)\|^{2}+\int_{\Omega}f_0^{\prime}(x, u)|u_t|^2 \mathrm{d}x\right\}+\epsilon_{2}(-\Delta u+f_0(x, u), u_t)\\
	&-\epsilon_{3}\left\{(\nabla u_t, \nabla u)+\int_{\Omega}f_0(x, u)u_{t}\mathrm{d}x+\int_{\Gamma}u u_{t}\mathrm{d}S\right\} -\epsilon_3\int_{\Gamma}u_t g(u_{t})\mathrm{d}S,
\end{split}
\end{equation*}
which is well defined for all $t\geqslant 0$. By doing some calculation, we obtain
\begin{equation}\label{thm5_2}
\begin{split}
	\frac{\mathrm{d}}{\mathrm{d}t}\mathcal{V}(t) =& -\int_{\Gamma}u_t g(u_{t})\mathrm{d}S - 2\epsilon_1\|u_{tt}\|^2_{L^2(\Gamma)}+  \epsilon_2\|u_{tt}\|^2- \epsilon_2\|-\triangle u + f_0(x, u)\|^2 \\
	&- (\epsilon_3-\epsilon_2) \|u_{t}\|^2_{L^2(\Gamma)}- 2\epsilon_1\int_{\Gamma}u_{tt}^2 g^{\prime}(u_{t})\mathrm{d}S-(\epsilon_3-\epsilon_2) \int_{\Gamma}u_{t}g^{\prime}(u_{t})u_{tt}\mathrm{d}S\\
	&-  (\epsilon_3-\epsilon_2) \|\nabla u_{t}\|^2_{L^2(\Gamma)} - (\epsilon_3-\epsilon_2)\int_{\Omega}f_0^{\prime}(x, u)|u_t|^2 \mathrm{d}x+\epsilon_1\int_{\Omega}f_0^{\prime\prime}(x, u)u_t|u_t|^2 \mathrm{d}x.
\end{split}
\end{equation}
By (A3) and Young's inequality, we get
\begin{equation}\label{thm5_3}
\begin{split}
\int_{\Gamma}u_{tt}^2 g^{\prime}(u_{t})\mathrm{d}S\geqslant m_1\|u_{tt}\|^2_{L^2(\Gamma)}&, \quad \int_{\Gamma}u_t g(u_{t})\mathrm{d}S\geqslant C(m_1, m_2) \left(\|u_{t}\|^2_{L^2(\Gamma)}+\|g(u_{t})\|^2_{L^2(\Gamma)}\right)\\
 -\int_{\Gamma}u_{t}g^{\prime}(u_{t})u_{tt}\mathrm{d}S \leqslant &\frac{m_2}{2}(\|u_{t}\|^2_{L^2(\Gamma)} + \|u_{tt}\|^2_{L^2(\Gamma)}).
\end{split}
\end{equation}
From Lemma~\ref{solu:boundedness} it follows
\begin{equation}\label{thm5_4}
	-\int_{\Omega}f_0^{\prime}(x, u)|u_t|^2 \mathrm{d}x\leqslant C_1\|u_t\|^2,\quad -\int_{\Omega}f_0^{\prime\prime}(x, u)u_t|u_t|^2 \mathrm{d}x\leqslant C_2\|u_t\|^3_{L^3(\Omega)}.
\end{equation}
Moreover, applying the Gagliardo--Nirenberg inequality and H\"older's inequality, we derive the following estimate from the second inequality
\begin{equation}\label{thm5_5}
\|u_t\|^3_{L^3(\Omega)} \mathrm{d}x \leqslant \epsilon_{4}\|\nabla u_t\|^2 +C(\epsilon_{4}) \|u_t\|^6+C_3\|u_t\|^3.
\end{equation}
Combing~\eqref{thm5_3}, ~\eqref{thm5_4} and ~\eqref{thm5_5}, we return to~\eqref{thm5_2} and conclude
\begin{equation}\label{thm5_6}
\begin{split}
	\frac{\mathrm{d}}{\mathrm{d}t}\mathcal{V}(t) \leqslant & -\left(C(m_1, m_2)- \frac{(\epsilon_3-\epsilon_2)(m_2-2)}{2}\right)\|u_{t}\|^2_{L^2(\Gamma)} - C(m_1, m_2)\|g(u_{t})\|^2_{L^2(\Gamma)}\\
	&- 2\epsilon_1\left( m_1 + 1-\frac{(\epsilon_3-\epsilon_2)m_2}{2}\right)\|u_{tt}\|^2_{L^2(\Gamma)}+  \epsilon_2\|u_{tt}\|^2\\
	& + \left\{C_1(\epsilon_3-\epsilon_2)+ C_2  C(\epsilon_{4}) \epsilon_1\|u_t\|^4 + C_2  C_3 \epsilon_1\|u_t\|\right\}\|u_t\|^2\\
	&-  (\epsilon_3-\epsilon_2-C_2 \epsilon_{1}\epsilon_{4}) \|\nabla u_{t}\|^2_{L^2(\Gamma)} - \epsilon_2\|-\triangle u + f_0(x, u)\|^2 .
\end{split}
\end{equation}
Using~\eqref{important_inequal}, \eqref{thm5_1}, and choosing $\epsilon_{i}$ small enough, it immediately derives from~\eqref{thm5_6} that there exists $T_1>0$ such that for all $t>T_1$,
\begin{equation}\label{thm5_7}
\begin{split}
	\frac{\mathrm{d}}{\mathrm{d}t}\mathcal{V}(t) \leqslant& -C_4\left(\|u_{t}\|^2 + \|u_{t}\|^2_{L^2(\Gamma)} + \|g(u_{t})\|^2_{L^2(\Gamma)}+  \|\nabla u_{t}\|^2_{L^2(\Gamma)}+\|-\triangle u + f_0(x, u)\|^2\right)\\
	\leqslant& -\frac{C_4}{4}\left(\|u_{t}\| + \|u_{t}\|_{L^2(\Gamma)} +\|g(u_{t})\|_{L^2(\Gamma)}+  \|\nabla u_{t}\|_{L^2(\Gamma)}+\|-\triangle u + f_0(x, u)\|\right)^2.
\end{split}
\end{equation}
Here, $C_4$ and $C_5$ depend on $m_1$, $m_2$ and $\epsilon_{i}$.

\textbf{Step 2}. 
From~\eqref{thm5_7} we know $\mathcal{V}(t)$ is decreasing on $[T_1, \infty)$, which follows that $\mathcal{V}(t)$ has a finite limit as $t\rightarrow\infty$. 
For $(\varphi, 0)^\top\in\omega(u_0, u_1)$, there is a sequence $t_n$, such that
\begin{equation} \label{thm5_8}
u(t_n, x)\rightarrow\varphi(x), \quad t_n\rightarrow\infty 
\end{equation}
in $\mathcal{H}$. Then we have $E(u(t_n))$ converges to $E(\varphi)$ as $n\rightarrow\infty$. Furthermore, thanks to Lemma~\ref{solu:boundedness},~\eqref{thm5_1} and~\eqref{thm5_7}, we can derive $\mathcal{V}(t)\geqslant E(\varphi)$ for any $t>T_1$.

By Lemma~\ref{solu:boundedness} and H\"older's inequality, we get
\begin{equation*}
\begin{split}
0\leqslant \mathcal{V}(t) - E(\varphi) \leqslant &C_6\Big(\|u_t\|^2 + |E(u) - E(\varphi)| + \|\nabla u_t\|^{2} +\|-\Delta u+f_0(x, u)\|^{2}\\
&\qquad + \|u_{t}\|^2_{L^2(\Gamma)}+ \|g(u_{t})\|^2_{L^2(\Gamma)}+\|-\Delta u+f_0(x, u)\| \|u_{t}\|\Big),
\end{split}
\end{equation*}
which implies
\begin{equation*}
\begin{split}
|\mathcal{V}(t) - E(\varphi) |^{1-\eta}\leqslant &C_7\Big(\|u_t\|^{2(1-\eta)} + |E(u) - E(\varphi)|^{(1-\eta)} + \|\nabla u_t\|^{2(1-\eta)} \\
&\qquad +\|-\Delta u+f_0(x, u)\|^{2(1-\eta)} + \|u_{t}\|^{2(1-\eta)}_{L^2(\Gamma)}+ \|g(u_{t})\|^{2(1-\eta)}_{L^2(\Gamma)} \\
&\qquad +\|-\Delta u+f_0(x, u)\|^{(1-\eta)} \|u_{t}\|^{(1-\eta)} \Big).
\end{split}
\end{equation*}
Here, $\eta\in(0, \frac{1}{2}]$ is the {\L}ojasiewicz exponent. Applying Young's inequality to deal with the last term $\|-\Delta u+f_0(x, u)\|^{(1-\eta)} \|u_{t}\|^{(1-\eta)}$, we have
\begin{equation*}
\begin{split}
|\mathcal{V}(t) - E(\varphi) |^{1-\eta}\leqslant &C_8\Big(\|u_t\|^{2(1-\eta)} + |E(u) - E(\varphi)|^{(1-\eta)} + \|\nabla u_t\|^{2(1-\eta)} \\
&\qquad +\|-\Delta u+f_0(x, u)\|^{2(1-\eta)} + \|u_{t}\|^{2(1-\eta)}_{L^2(\Gamma)} + \|g(u_{t})\|^{2(1-\eta)}_{L^2(\Gamma)} \\
&\qquad +\|-\Delta u+f_0(x, u)\| +  \|u_{t}\|^{\frac{1-\eta}{\eta}} \Big).
\end{split}
\end{equation*}
Note that $2(1-\eta) \geqslant 1$ and $\frac{1-\eta}{\eta} \geqslant 1$. By~\eqref{thm5_1} and Lemma~\ref{solu:boundedness}, we obtain
\begin{equation}\label{thm5_9}
\begin{split}
|\mathcal{V}(t) - E(\varphi) |^{1-\eta}\leqslant &C_9\Big(|E(u) - E(\varphi)|^{(1-\eta)} + \|u_t\| + \|\nabla u_t\| + \|u_{t}\|_{L^2(\Gamma)}\\
&\qquad+\|-\Delta u+f_0(x, u)\| + \|g(u_{t})\|_{L^2(\Gamma)} \Big)
\end{split}
\end{equation}
for any $t>T_1$.

\textbf{Step 3}. From~\eqref{thm5_8} we can deduce that for any $\varepsilon>0$ satisfying $\varepsilon < \sigma$, there is an integer $N$ such that if $n \geq N$, one has $t_n>T_1$ and
\begin{equation}
	\| u(t_{n}, \cdot) - \psi(\cdot)\|_{H^1} \leq \frac{\varepsilon}{2}, \label{thm5_10}
\end{equation}
and
\begin{equation}
	[\mathcal{V}(t_n)-E(\varphi)]^{\eta} -[\mathcal{V}(t)-E(\varphi)]^{\eta}\leq \frac{\varepsilon}{2C_{10}}. \label{thm5_10}
\end{equation}

Now, let us define
\begin{eqnarray*}
	t^{\ast}_{n} :=\sup\{t>t_n: \|u(s, \cdot)-\psi(\cdot) \|_{H^1}<\sigma, \forall s\in [t_n, t] \}.
\end{eqnarray*}
By the continuity of the solution in $H^2(\Omega)$, taking~\eqref{thm5_10} into account gives $t^{\ast}_{n} > t_n$ for any $n\geqslant N$. If $t^{\ast}_{n}= \infty$, then for $t>t_n$, the {\L}ojasiewicz-type inequality~\eqref{lojasiewicz} holds. By~\eqref{lojasiewicz}, \eqref{thm5_7} and~\eqref{thm5_9}, we have
 \begin{equation}\label{thm5_11}
-\frac{\mathrm{d}}{\mathrm{d}t}[\mathcal{V}(t) - E(\varphi)]^{\eta}=-\eta [\mathcal{V}(t) - E(\varphi)]^{\eta-1}\frac{\mathrm{d}}{\mathrm{d}t}\mathcal{V}(t)\geqslant C_{10}\|u_t\|.
\end{equation} 
Integrating~\eqref{thm5_11} over $[t_n, t]$, it can be seen that
\[
	\int_{t_n}^{t}\|u_{t}\|ds\leq C_{10}\{[H(t_n)-E(\psi)]^{\theta} -[H(t)-E(\psi)]^{\theta}\}\leq  \frac{\varepsilon}{2},
\]
which implies $u(t)$ converges in $L^2(\Omega)$ as time goes to infinity. From Lemma \ref{solu:boundedness}, we know $u(t)$ is precompact in $H^1$. Then by the uniqueness of the limit we get 
\[
	\lim_{t\rightarrow\infty}\| u(t)-\psi\|_{H^1}=0.
\]
Another case is that for all $n \geq N$, $t_n<t^{\ast}_{n} < \infty$. Similarly, we have
\begin{equation}\label{thm5_12}
	\int_{t_n}^{t^{\ast}_n}\|u_{t}\|ds\leq C_{10}\{ [H(t_n)-E(\psi)]^{\theta} -[H(\hat{t}_n)-E(\psi)]^{\theta}\} \leq  \frac{\varepsilon}{2}.
\end{equation}
Thus, it follows from~\eqref{thm5_10} and~\eqref{thm5_12} that
\begin{equation*}
	\| u(t^{\ast}_n)-\psi\| \leq \| u(t_n)-\psi\|+ \int_{t_n}^{t^{\ast}_n}\|u_{t}\|ds \leq  \varepsilon,
\end{equation*}
which means that $u(t^{\ast}_n)$ converges to $\varphi$ in $L^2(\Omega)$ as $n \rightarrow\infty$. Since $u(t)$ is precompact in $H^1$, then we can deduce that
\[
	\| u(t^{\ast}_n)-\psi\|_{H^1}< \sigma,
\]
which contradicts the definition of $t^{\ast}_n$. In summary, we complete the proof of this theorem.
\end{proof}

As a byproduct of Theorem~\ref{thm:main2}, we have the following corollary. 
\begin{corollary}
Under the assumptions of Theorem~\ref{thm:main2}, the solution $u$ of~\eqref{wave_eq2} converges to an equilibrium $\varphi$ polynomially in $H^1(\Omega)$ if the {\L}ojasiewicz exponent $\eta\in(0, \frac{1}{2})$, and converges exponentially when $\eta =\frac{1}{2}$.
\end{corollary}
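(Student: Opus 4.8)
The plan is to convert the qualitative convergence already established in Theorem~\ref{thm:main2} into a rate by turning the estimates \eqref{thm5_7}, \eqref{thm5_9} and \eqref{thm5_11} into a single autonomous differential inequality for the shifted functional $H(t):=\mathcal{V}(t)-E(\varphi)$. By Theorem~\ref{thm:main2} the trajectory eventually enters and stays in the {\L}ojasiewicz neighbourhood $\{\|u-\varphi\|_{H^1}<\sigma\}$, so there is a $T_\ast>0$ with $t\ge T_\ast$ for which both the dissipation estimate \eqref{thm5_7} and the {\L}ojasiewicz--Simon inequality \eqref{lojasiewicz} hold along $u(t)$. I would first record that $H(t)\ge 0$, that $H$ is nonincreasing on $[T_\ast,\infty)$ by \eqref{thm5_7}, and that $H(t)\to 0$, the limit being identified exactly as in Step~2 of the proof of Theorem~\ref{thm:main2} (monotone limit of $\mathcal{V}$ matched against $\mathcal{V}(t_n)\to E(\varphi)$).

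Next I would close the inequality. Writing $Z(t)$ for the square root of the dissipation appearing on the right-hand side of \eqref{thm5_7}, and using the boundary identity $\partial_\nu u+u=-g(u_t)$ on $\Gamma$, the {\L}ojasiewicz inequality \eqref{lojasiewicz} gives $|E(u)-E(\varphi)|^{1-\eta}\le C^{-1}\bigl(\|-\Delta u+f_0(x,u)\|+\|g(u_t)\|_{L^2(\Gamma)}\bigr)\le C^{-1}Z(t)$; substituting this into \eqref{thm5_9} yields $H(t)^{1-\eta}\le C\,Z(t)$, whereas \eqref{thm5_7} gives $-\tfrac{\mathrm{d}}{\mathrm{d}t}H(t)\ge c\,Z(t)^2$. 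Eliminating $Z$ produces the closed differential inequality
\[
 -\frac{\mathrm{d}}{\mathrm{d}t}H(t)\ \ge\ \kappa\,H(t)^{2(1-\eta)},\qquad t\ge T_\ast,
\]
for some $\kappa>0$. I expect this matching step---arranging all the interior and boundary norms so that the {\L}ojasiewicz gradient is genuinely dominated by the dissipation $Z(t)$---to be the main obstacle; the remaining arguments are elementary.

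Integrating the inequality then gives the decay of $H$. When $\eta=\tfrac12$ the exponent $2(1-\eta)=1$, and a Gronwall argument yields $H(t)\le H(T_\ast)e^{-\kappa(t-T_\ast)}$. When $\eta\in(0,\tfrac12)$ the exponent lies in $(1,2)$, and integrating the resulting inequality $\tfrac{\mathrm{d}}{\mathrm{d}t}H^{-(1-2\eta)}\ge(1-2\eta)\kappa$ gives $H(t)\le\bigl[(1-2\eta)\kappa(t-T_\ast)\bigr]^{-1/(1-2\eta)}$ for $t$ large.

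Finally I would pass from the decay of $H$ to the convergence rate of the trajectory. Integrating \eqref{thm5_11} from $t$ to $\infty$ and using $H(\infty)=0$ bounds the arc length, $\int_t^\infty\|u_s\|\,\mathrm{d}s\le C_{10}^{-1}H(t)^{\eta}$, so that $\|u(t)-\varphi\|_{L^2}\le C_{10}^{-1}H(t)^{\eta}$; inserting the two decay laws gives exponential decay of $\|u(t)-\varphi\|_{L^2}$ when $\eta=\tfrac12$ and polynomial decay of order $(t-T_\ast)^{-\eta/(1-2\eta)}$ when $\eta\in(0,\tfrac12)$. To upgrade this $L^2$ rate to the claimed $H^1$ rate I would interpolate against the uniform $H^2$ bound from Lemma~\ref{solu:boundedness}, namely $\|u(t)-\varphi\|_{H^1}\le C\|u(t)-\varphi\|_{L^2}^{1/2}\|u(t)-\varphi\|_{H^2}^{1/2}\le C'\|u(t)-\varphi\|_{L^2}^{1/2}$, which preserves the exponential/polynomial character and merely alters the constants and the polynomial exponent.
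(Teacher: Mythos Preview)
Your argument is correct and proves the corollary, but it is organized differently from the paper's proof. The paper does not derive a differential inequality for the energy gap $H(t)=\mathcal{V}(t)-E(\varphi)$; instead it introduces the tail arc-length
\[
\mathcal{K}(t)=\int_t^\infty\Bigl(\|u_s\|+\|\nabla u_s\|+\|u_s\|_{L^2(\Gamma)}+\|g(u_s)\|_{L^2(\Gamma)}+\|-\Delta u+f_0(x,u)\|\Bigr)\,\mathrm{d}s,
\]
shows $\mathcal{K}'(t)\le -C_1 H(t)^{1-\eta}$ from \eqref{thm5_9} and the {\L}ojasiewicz inequality, shows $\mathcal{K}(t)\le C H(t)^\eta$ from \eqref{thm5_7}--\eqref{thm5_11}, and combines these into the closed inequality $\mathcal{K}'\le -C_1\mathcal{K}^{(1-\eta)/\eta}$. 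Your route via $-H'\ge\kappa H^{2(1-\eta)}$ is the dual, equally standard, way of running the {\L}ojasiewicz machinery; it is a bit more direct since it avoids introducing $\mathcal{K}$ as an auxiliary object. The price you pay is in the last step: because the integrand of $\mathcal{K}$ already contains $\|\nabla u_s\|$ and $\|u_s\|_{L^2(\Gamma)}$, the paper obtains $\|u(t)-\varphi\|_{H^1}\le C\mathcal{K}(t)$ directly, with decay exponent $-\eta/(1-2\eta)$ in the polynomial case, whereas your interpolation against the uniform $H^2$ bound halves the exponent to $-\eta/(2(1-2\eta))$. Both approaches establish the stated exponential/polynomial dichotomy; if you want the sharper constant you can simply note that your own computation $-\tfrac{\mathrm{d}}{\mathrm{d}t}H^\eta\ge cZ$ (obtained by combining $-H'\ge cZ^2$ with $H^{1-\eta}\le CZ$) already bounds $\int_t^\infty Z\,\mathrm{d}s$, which controls the $H^1$ arc length without interpolation.
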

\begin{proof}
Let us define the following auxiliary function
\[
	\mathcal{K}(t):=\int_{t}^{\infty}\left\{\|u_s\| + \|\nabla u_s\|+\|u_s\|_{L^{2}(\Gamma)}+\|g(u_s)\|_{L^{2}(\Gamma)}+ \|-\Delta u+f(x, u)\|\right\}\mathrm{d}s.
\]
Applying Theorem~\ref{thm:main2}, there exists $T>0$ large enough such that for all $t\geqslant T$, it follows from the {\L}ojasiewicz-type inequality~\eqref{lojasiewicz} and~\eqref{thm5_10} that
\begin{equation}
\begin{split}
	\mathcal{K}^{\prime}(t)&=-\left(\|u_t\| + \|\nabla u_t\|+\|u_t\|_{L^{2}(\Gamma)}+\|g(u_t)\|_{L^{2}(\Gamma)}+ \|-\Delta u+f(x, u)\| \right)\\
			&\leq -C_1|\mathcal{V}(t)-E(\varphi)|^{1-\eta}. \label{coro_1}
\end{split}
\end{equation}
Based on~\eqref{thm5_7}, \eqref{thm5_9} and~\eqref{thm5_11} in the proof Theorem~\ref{thm:main2}, we can deduce that $\mathcal{K}(t)\leq |\mathcal{V}(t)-E(\psi)|^{\eta}$, that is, 
\[
	\mathcal{K}(t)^{\frac{1-\eta}{\eta} }\leq |\mathcal{V}(t)-E(\varphi)|^{1-\eta}.
\]
Then combining this and~\eqref{coro_1} gives
\[
	\mathcal{K}(t)^{\prime}\leq -C_1\mathcal{K}(t)^{\frac{1-\eta}{\eta} }, \quad t\geqslant T,
\]
which implies
\begin{equation*}
\mathcal{K}(t) \leqslant
\left
    \{
        \begin{array}{ll}
		\left[\mathcal{K}(T)^{-\frac{\eta}{1-2\eta}} + \frac{1-2\eta}{\eta}C_1(t-T)\right]^{-\frac{\eta}{1-2\eta}}, & \eta \in (0, \frac{1}{2}),\\
		\\
		\mathcal{K}(T) e^{-C_1 (t-T)},& \eta=\frac{1}{2}.
        \end{array}
\right.
\end{equation*}
Since we have
\[
	\| u -\psi \|_{H^1}\leq C_2 \mathcal{K}(t), \quad t\geqslant T,
\]
then this corollary is proved. 
\end{proof}

\section{Conclusion}
\label{sec:conclusion}
This paper systematically investigates problem (Q3) and provides an answer to the open question posed in~\cite{rodrigues2022}. We propose some sufficient conditions on the interior nonlinearity that ensure the equilibria set is infinite, and we also construct an explicit example of such nonlinearity. In this scenario, the convergence results established in~\cite{CEL2002, chueshov2004} are not applicable, as the finiteness of the equilibria set is a crucial assumption in those works. However, we offer a new convergence result that does not require this finiteness condition. In this way, our findings complement the results of~\cite{CEL2002, chueshov2004}. Additionally, we find a class of $C^\infty$-functions for which the wave equation~\eqref{wave_eq2} has a bounded solution whose $\omega$-limit set is a continuum of equilibria. As our future research work, it is interesting to consider the convergence problem for the wave equation~\eqref{wave} in a {\L}ojasiewicz-type landscape.

\section*{Acknowledgments}
The authors thank the anonymous referees very much for the helpful suggestions.

\section*{References}

\bibliography{mybibfile}

\end{document}